\newenvironment{proof}{\noindent{\em \textbf{Proof.}}}{\quad \hfill$\Box$\vspace{2ex}}
\renewcommand{\Re}{\mathcal{R}}
\newtheorem{theorem}{Theorem}[section]
\newtheorem{definition}[theorem]{Definition}
\newtheorem{remark}[theorem]{Remark}
\newtheorem{corollary}[theorem]{Corollary}
\newtheorem{lemma}[theorem]{Lemma}
\newtheorem{example}[theorem]{Example}
\newtheorem{proposition}[theorem]{\bf Proposition}
\numberwithin{equation}{section}
\def \H {\mathbb{H}}
\def \T {\mathbb{T}}
\def \C {\mathbb{C}}
\def \rc {\mathcal{R}}
\def\qi { \bm {i}}
\def\qj { \bm{j}}
\def\qk { \bm{k}}
\def\sgn{\mbox{sgn}}
\def\Arg{\mbox{Arg}}
\def\Ln{\mathrm{Ln}}
\def \cx {\mathfrak{C}}
\def \bphi {\bm{\phi}}
\def \bvphi {\bm{\varphi}}
\def \bfunc {\bm{f}}
\def \bq {\bm{q}}
\def \bmeta {\bm{\eta}}
\def \bx {\bm{x}}
\def \by {\bm{y}}
\def \bme {\bm{e}}
\def \qd {\widehat{\det}}
\newcommand{\abs}[1]{\left|#1\right|}
\title{A Unified Analysis of Linear Quaternion Dynamic Equations on Time Scales}
	\author[a]{Dong Cheng\thanks{chengdong720@163.com}}
	\author[a]{Kit Ian Kou\thanks{kikou@umac.mo}}
	\author[c]{Yong Hui Xia \thanks{xiadoc@163.com}}
	\affil[a]{\normalsize{Department of Mathematics, Faculty of Science and Technology, University of Macau, Macao, China}}
	\affil[c]{\normalsize{Department of Mathematics, Zhejiang Normal University, Jinhua, China}}
	\date{}
\begin{document}
  \maketitle
\begin{abstract}
	\normalsize

	Over the last years, considerable attention has been paid to the role of the quaternion differential equations (QDEs)
which extend the ordinary differential equations. The theory of QDEs was recently well established and it has wide applications in physics and life science. This paper establishes a systematic frame work for the theory of                                                                                                                                                                                                                                                                                                                                                                                                                                                                                                                                                                                                    linear quaternion dynamic equations on time scales (QDETS), which can be applied to wave phenomena modeling, fluid dynamics and filter design. The algebraic structure of the solutions to the QDETS is actually a left- or right- module, not a linear vector space. On the non-commutativity of the quaternion algebra, many concepts and properties of the classical dynamic equations on time scales (DETS) can not be applied. They should be redefined accordingly. Using $q$-determinant, a novel definition of Wronskian is introduced under the framework of quaternions which is different from the
standard one in DETS. Liouville's formula
for QDETS is also analyzed. Upon these, the
solutions to the linear QDETS are established. The Putzer's algorithms
to evaluate the fundamental solution matrix for homogeneous QDETS are presented. Furthermore, the
variation of constants formula to solve the nonhomogeneous QDETs is given.
Some concrete examples are provided
to illustrate the feasibility of the proposed algorithms.
\end{abstract}

 \begin{keywords}
		Dynamic systems on time scales, difference  equations, fundamental solution matrix, quaternions.
\end{keywords}

\begin{msc}
		34N05, 34A30, 39A06, 20G20.
\end{msc}

\section{Introduction}\label{S1}

The theory of dynamic equations on time scales (DETS) has enormous applications \cite{atici2006application,marks2008generalized}.  It is applicable to many fields in which physical phenomena can be described by  continuous or discrete dynamical models. For instance, both continuous and discrete models are used in 3D tracking of shape, motion estimation \cite{metaxas2005dynamic} and DNA dynamics \cite{klapper1998remarks}. An unify framework for the theory of DETS was introduced in 1988 by Hilger \cite{hilger1988thesis, hilger1990analysis}. It  unifies continuous and discrete dynamic equations. The classical differential and difference equations are special cases of DETS. 

Over the years, the theory of quaternion  differential equations (QDEs) has received a lot of attention \cite{wilczynski2009quaternionic,wilczynski2012quaternionic,campos2006periodic,gasull2009one,zhang2011global}. The QDEs have numerous applications in physics and engineering, such as spatial kinematic modelling and attitude dynamics \cite{chou1992quaternion,gupta1998linear}, fluid mechanics \cite{gibbon2002quaternionic,gibbon2006quaternions}, quantum mechanics \cite{alder1986quaternionic,adler1995quaternionic} and so on. Recently, the basic theory and fundamental results of linear QDEs  was established \cite{kou2015linear1,kou2015linear2,xia2016algorithm}.  It is  interesting and necessary to  extend the  theory of QDEs to quaternion dynamic equations on arbitrary time scales (QDETS), so that the theory  of quaternion dynamic equations can be widely applied to physical and engineering problems. On  the one hand, both discrete and mixture of continuous and discrete dynamical models are subsumed within the QDETS. On the other hand, some differential equations need to be integrated into difference equations for computations or simulations.  For example,  the kinematics system on discrete time was studied in \cite{sola2012quaternion}.

The main purpose  of this paper is to study  the basic theory of linear QDETS. The Hilger quaternion numbers on time scales was studied in \cite{georgiev2013introduction}. The researchers further gave the definition of the quaternion exponential function on time scales \cite{georgiev2013introduction}. However, we will show that the quaternion exponential function, in general, is not the solution of one-dimensional homogeneous linear QDETS:
\begin{equation*}
	y^{\Delta} =p(t)y,
\end{equation*}
unless $p(t)$ is  either  a real-valued function or  a quaternion constant function.  This is a striking  difference between DETS and QDETS.  Owing  to the non-commutativity of multiplication of quaternions, there are many concepts   of DETS are not effective for QDETS. Besides, in consideration of the differences  between QDETS and QDEs in nature, lots of results concerning QDEs can not be easily  carried  out to the corresponding results of QDETS. The product rule of delta derivative  on time scales is more tedious than  traditional derivative. Therefore  the Wronskian defined in   \cite{kou2015linear2}  is inconvenient to be applied to the QDETS since it contains many product operations.  Thanks to the  systematic exposition of quaternion linear algebra (refer to \cite{zhang1997quaternions,wang2005general,rodman2014topics}) in recent years, there are quite a few accessible and significant results  can be applied not only by mathematicians, but also by scientists and engineers. In particular,  complex adjoint matrix representation of quaternion matrix plays a critical role in the current study. The  definition of determinant by  complex adjoint matrix, so-called $q$-determinant,  is crucial to define Wronskian of  QDETS and  to derive  the Liouville's formula for  QDETS.   Employing the newly established Wronskian and Liouville's formula for QDETS, we obtain the algebraic structure of general solutions of $n$ dimensional linear QDETS. It is a right quaternion module.

Explicit formulations of the fundamental  solution matrices (in particular, $e^{At}$) for the linear  QDEs with quaternion constant coefficient matrix  were  derived in  \cite{kou2015linear2}.  According to the discussion  in  \cite{baker1999right,farid2011eigenvalues}, the eigenvalue problem  of quaternion matrix is complicated.  A quaternion matrix usually has infinite number of eigenvalues. Moreover, the set of all eigenvectors corresponding to a non-real eigenvalue is not a submodule. If the
$n\times n$ coefficient  matrix $A$   has $n$ right linearly independent eigenvectors. Then  the  fundamental  solution matrix $e^{At}$ can be written in terms of the eigenvalues and eigenvectors. Otherwise,  more efforts   need to be  exerted. In  \cite{kou2015linear2}, the authors constructed $e^{At}$  by means of  series expansion and root subspace decomposition of quaternion matrix .

For linear  QDETS with quaternion constant coefficient matrix, it is not difficult  to find its fundamental  solution matrix if its coefficient matrix has enough  right linearly independent eigenvectors. Otherwise, we cannot use the method of combining series expansion and root subspace decomposition to construct the fundamental  solution matrices of  linear  QDETS. This is because that  the generalized exponential function on time scales does not possess simple series expansion  in contrast to $e^{At}$. In order to overcome this difficulty, we propose a modified Putzer's  algorithm to find the   fundamental  solution matrices of  linear  QDETS. The  Putzer's  algorithm  is particularly useful for quaternion  coefficient matrices  that do  not have enough  right linearly independent eigenvectors since it avoids the computing of  eigenvectors. To authors' best knowledge,  the   Vieta's formulas  of  quaternion polynomials and the  theory  of  annihilating polynomial of  quaternion matrices have not been well studied yet. Thus the  operability of  Putzer's  algorithm for    QDETS
may  be   confronted with some challenges. Still and all, further discussion  in a later section indicates that  the Putzer's  algorithm for    QDETS may after all be accepted as a good choice.

The rest of the paper is organized as follows.  In Section \ref{S2}, some basic concepts of quaternion algebra and the calculus of time scales are reviewed. In Section \ref{S3},  the first order linear  homogeneous QDETS are studied and the properties of generalized exponential function for QDETS  are investigated. In Section \ref{S4}, the structure of general solutions of higher order linear QDETS are analyzed. Specifically, a novel Wronskian determinant for QDETS  is  defined and the Liouville's formula and  variation of constants formula are given. In Section \ref{S5}, explicit formulations of the fundamental solution matrices  for linear
QDETS  with  constant coefficient matrix are  presented.  Some examples are given to illustrate the feasibility of the established  Putzer's algorithm. Finally, some conclusions are drawn at the end of the paper.

\section{Preliminaries}\label{S2}

\subsection{Quaternion  algebra}\label{S2.1}

The quaternions  were invented   in 1843 by Hamilton  \cite{sudbery1979quaternionic}.  The skew field of quaternions is denoted by
\begin{equation*}
	\H:= \{q=q_0+q_1\qi  +q_2 \qj+ q_3\qk\}
\end{equation*}
where $q_0,q_1,q_2,q_3$ are real numbers and the elements $\qi$, $\qj$ and $\qk$ obey the Hamilton's multiplication rules:
\begin{equation*}
	\qi\qj=-\qj\qi=\qk,~~\qj\qk=-\qk\qj=\qi,~~\qk\qi=-\qi\qk=\qj,~~\qi^2=\qj^2=\qi\qj\qk=-1.
\end{equation*}

For every quaternion $q=q_0+\qi q_1+\qj q_2+\qk q_3$, the scalar and vector parts of $q$, are  defined as $\Re(q)=q_0$ and $\Im(q)=q_1\qi  +q_2 \qj+ q_3\qk$, respectively. If $q= \Im(q)$, then $q$ is called pure imaginary quaternion.
The quaternion conjugate is defined by $\overline{q}=q_0-\qi q_1-\qj q_2-\qk q_3$, and the norm $|q|$ of $q$ is defined as
$|q|^2={q\overline{q}}={\overline{q}q}=\sum_{m=0}^{m=3}{q_m^2}$.
Using the conjugate and norm of $q$, one can define the inverse of $q\in\H\backslash\{0\}$ by $q^{-1}=\overline{q}/|q|^2$.
For each fixed unit pure imaginary quaternion $\bm \varsigma$, the quaternion has subset $\mathbb{C}_{\bm \varsigma}:=\{a+b \bm \varsigma  :a,b\in\mathbb{R}\}$  and  $\mathbb{C}_{\bm \varsigma}$ is   isomorphic to the complex numbers.

The quaternion exponential function $\exp(q)$ is defined by means of an infinite series as $$\exp(q):=\sum_{n=0}^\infty \frac{q^n}{n!}.$$
Analogous to the complex case one may derive a closed-form representation:
$$\exp(q)=\exp( \Re(q))\left(\cos|\Im(q)|+\frac{\Im(q)}{|\Im(q)|}\sin|\Im(q)|\right).$$
For simplicity of notations, we sometimes use $e^{q}$ to represent $\exp(q)$.
For every $q\in\H\backslash\{0\}$, its principal argument  is defined by
\begin{equation*}
	\Arg(q):=\arccos \frac{\Re(q)}{\abs{q}}\in [0,\pi].
\end{equation*}
Then all possible values of the argument  can be expressed as
$\arg (q)= \Arg(q) +2k\pi,~k\in \mathbb{Z}$.
It follows that   the polar form of a non-real  quaternion can be written as:
\begin{equation*}
	q=    \abs{q}\left(\frac{\Re(q)}{\abs{q}}+ \frac{  \Im(q) }{\abs{\Im(q)}} \cdot \frac{\abs{\Im(q)}}{\abs{q}}\right)
	= \abs{q} \exp ({\bm\varsigma }\theta),
\end{equation*}
where ${\bm\varsigma} =\frac{  \Im(q) }{\abs{\Im(q)}} $ and $\theta = \arg (q)$.
Accordingly,  the  principal  logarithm function is defined by
\begin{equation*}
	\Ln(q):=\begin{cases}\ln(\abs{q})+\frac{  \Im(q) }{\abs{\Im(q)}} \Arg(q)&q\in \H\setminus \mathbb{R},  \\
		\ln(\abs{q})+\qi\pi\frac{1-\sgn(q)}{2}&q\in \mathbb{R}.
	\end{cases}
\end{equation*}

Let $h>0$,  Georgiev and Morais \cite{georgiev2013introduction} introduced the Hilger quaternion numbers
\begin{equation*}
	\H_h:=\{p\in \H:p\neq -\frac{1}{h}\}.
\end{equation*}
They defined the addition $\oplus$ on $\H_h$  by
$ p \oplus q:= p+q+pqh$
and proved that $(\H_h, \oplus)$ is a group. The generalized  quaternion cylinder transformation was also given in \cite{georgiev2013introduction}:
\begin{equation}\label{quaternion cylinder transformation}
\xi_{h}(q):=\begin{cases}\frac{1}{h}\Ln(1+qh),&h> 0;\\
q,&h=0.\end{cases}
\end{equation}

Next we recall an important transformation between quaternion and complex matrices which were studied in  \cite{aslaksen1996quaternionic,zhang1997quaternions}. Every   quaternion
matrix $A\in\H^{m\times n}$
can be expressed uniquely in the form of
\begin{equation*}
	A=\cx_1(A)+\cx_2(A)\qj, ~~~\text{where}~~ \cx_1(A),  \cx_2(A) \in \C^{m\times n}.
\end{equation*}
So we can define  $\mathfrak{G}:\H^{m\times n}\to \C^{2m\times 2n}$ by
\begin{equation*}
	\mathfrak{G}(A):=
	\begin{pmatrix}
		\cx_1(A)&\cx_2(A)\\
		-\overline{\cx_2(A)} & \overline{\cx_1(A)}
	\end{pmatrix}.
\end{equation*}
where $\mathfrak{G}(A)$ is called the \emph{complex adjoint matrix} of the
quaternion matrix $A$. For simplicity, $\mathfrak{G}(A)$ will be denoted by $\chi_A$ in the following.

From \cite{kou2015linear1}, we know that $\H^n$ over the division ring $\H$ is a right $\H$-module and $\bmeta_1,\bmeta_2,\dots,\bmeta_k\in \H^n$ are right linearly independent if
\begin{equation*}
	\bmeta_1\alpha_1+\bmeta_2\alpha_2+\dots+\bmeta_k\alpha_k=0,\alpha_i\in \H~~ \text{implies that} ~~\alpha_1=\alpha_2=\dots=\alpha_k=0
\end{equation*}

Let $A\in\H^{n\times n}$, a   nonzero $\bmeta\in \H^{n\times 1}$ is said to be a \emph{right eigenvector} of $A$ corresponding to the \emph{right eigenvalue} $\lambda\in \H$  provided  that
\begin{equation*}
	A\bmeta=\bmeta \lambda
\end{equation*}
holds. A matrix $A_1$ is said to be similar to a matrix $A_2$ if $A_2=S^{-1}AS$ for some nonsingular matrix $S$. In particular, we say that  two quaternions $p,q$ are similar if $p=\alpha^{-1}q\alpha$ for some nonzero quaternion $\alpha$. We recall some basic results about   quaternion matrices which can be found, for instance, in \cite{zhang1997quaternions,baker1999right,rodman2014topics}.

\begin{theorem}\label{thm of q matrix}
	Let $A\in\H^{n\times n}$, then the following statements hold.
	\begin{enumerate}[(i)]
		\item  $A$  has exactly $n$ right eigenvalues (including multiplicity) which are  complex numbers with nonnegative imaginary parts. These eigenvalues are called standard eigenvalues.
		\item  If $\lambda$ is a eigenvalue of $A$, then  there exists a standard eigenvalue $\lambda'$ of $A$ such that $\lambda$ and $\lambda'$ are similar.
		\item $A$ is invertible if and only if $\chi_A$ is invertible.
		\item  $\det \chi_A \geq 0$, and the characteristic polynomial of $ \chi_A$ has real coefficients.
		\item  Let $\bmeta_1,\bmeta_2,\dots,\bmeta_k$  be eigenvectors of $A$ that correspond to eigenvalues  $\lambda_1,\lambda_2,\dots, \lambda_k$, respectively. If these  eigenvalues   are pairwise non-similar. Then $\bmeta_1,\bmeta_2,\dots,\bmeta_k$ are right linearly  independent.
		\item  If $A$ is (upper or lower) triangular, then the only eigenvalues are the diagonal elements (and the quaternions  similar to them).
	\end{enumerate}
\end{theorem}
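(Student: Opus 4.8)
The backbone of the argument is the correspondence $A\mapsto\chi_A$ together with a companion map on vectors. First I would record that $\mathfrak{G}:A\mapsto\chi_A$ is an injective unital $\mathbb{R}$-algebra homomorphism, i.e. $\chi_{A+B}=\chi_A+\chi_B$, $\chi_{AB}=\chi_A\chi_B$ and $\chi_{I_n}=I_{2n}$; all three follow from $A=\cx_1(A)+\cx_2(A)\j$ and the rule $\j z=\overline{z}\,\j$ for $z\in\C$. On vectors I would introduce $\psi:\H^{n}\to\C^{2n}$, $\psi(u+v\j)=(u,\,-\overline{v})^{T}$ for $u,v\in\C^{n}$; this is a bijection, it is $\C$-linear for complex scalars acting on the right (so $\psi(\bmeta\lambda)=\lambda\,\psi(\bmeta)$ when $\lambda\in\C$), and a direct computation gives $\psi(A\bmeta)=\chi_A\psi(\bmeta)$. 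Consequently, for complex $\lambda$ one has $A\bmeta=\bmeta\lambda$ if and only if $\chi_A\psi(\bmeta)=\lambda\,\psi(\bmeta)$: right eigenpairs of $A$ with complex eigenvalue correspond exactly to eigenpairs of $\chi_A$. Since $A(\bmeta\alpha)=(\bmeta\alpha)(\alpha^{-1}\lambda\alpha)$ shows a whole similarity class of $\lambda$ consists of right eigenvalues, and every quaternion is similar to a unique complex number of nonnegative imaginary part, I may always restrict to standard eigenvalues. These preliminaries carry most of the weight of the theorem.

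I would prove statement (3) next, since (1) depends on it. Writing $J=\begin{pmatrix}0&I_n\\-I_n&0\end{pmatrix}$, a block computation gives $\overline{\chi_A}=J\chi_A J^{-1}$, so $\chi_A$ is similar to $\overline{\chi_A}$ and its characteristic polynomial equals its own coefficientwise conjugate; hence that polynomial is real and the $2n$ eigenvalues of $\chi_A$ fall into conjugate pairs. Nonnegativity of $\det\chi_A$ then follows because the conjugate-linear map $v\mapsto J\overline{v}$ squares to $-I$ and commutes with $\chi_A$, which forces every real eigenvalue of $\chi_A$ to have even multiplicity; the product of the eigenvalues is thus a product of terms $\abs{\lambda}^{2}$ and even powers of reals, whence $\det\chi_A\ge 0$ (this is the nonnegativity of the Study determinant). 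Statement (2) is then immediate: $\psi$ is a bijection conjugating left multiplication by $A$ into $\chi_A$, so $A$ is invertible if and only if $\chi_A$ is. Finally statement (1) follows by combining the eigenpair correspondence with (3): the $n$ conjugate pairs supply exactly $n$ eigenvalues of nonnegative imaginary part (a real eigenvalue, of even multiplicity, being split evenly), and these are precisely the standard eigenvalues of $A$ counted with multiplicity.

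For statement (4) the noncommutativity is the real obstacle, and I expect this to be the crux of the whole theorem: the usual elimination "multiply the dependence by $\lambda_1$ and subtract" must be arranged so that scalars land on the correct (right) side. I would argue by a minimal counterexample. Among all families of eigenvectors with pairwise non-similar eigenvalues admitting a nontrivial right-dependence $\sum_{i=1}^{m}\bmeta_i\alpha_i=0$ with every $\alpha_i\neq0$, take one with $m$ smallest (necessarily $m\ge2$). Replacing $\bmeta_i$ by $\bmeta_i\alpha_i$, whose eigenvalue $\alpha_i^{-1}\lambda_i\alpha_i$ is still pairwise non-similar, I may assume all $\alpha_i=1$, so $\sum_i\bmeta_i=0$. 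Applying $A$ gives $\sum_i\bmeta_i\lambda_i=0$, while right-multiplying the relation by $\lambda_1$ gives $\sum_i\bmeta_i\lambda_1=0$; subtracting yields $\sum_{i=2}^{m}\bmeta_i(\lambda_i-\lambda_1)=0$, a strictly shorter dependence. Minimality forces $\lambda_i=\lambda_1$ for $i\ge2$, contradicting pairwise non-similarity. The decisive point is that right-multiplying by $\lambda_1$ (rather than left-multiplying) keeps the coefficients on the side where $A$ deposits its eigenvalues, which is exactly what makes the cancellation survive noncommutativity.

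Statement (5) I would treat in two halves. For the inclusion, if $A$ is upper triangular and $A\bmeta=\bmeta\lambda$ with $\bmeta\neq0$, let $j$ be the largest index with $\eta_j\neq0$; the $j$-th row collapses to $a_{jj}\eta_j=\eta_j\lambda$, so $\lambda=\eta_j^{-1}a_{jj}\eta_j$ is similar to $a_{jj}$. For the converse and the exact count I would block-triangularize $\chi_A$: permuting the ordered basis from $(e_1,\dots,e_n,f_1,\dots,f_n)$ to $(e_1,f_1,\dots,e_n,f_n)$ turns $\chi_A$ into a block-upper-triangular matrix whose $j$-th diagonal block is $\chi_{a_{jj}}$, because $\cx_1(A)$ and $\cx_2(A)$ are themselves triangular. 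Hence the characteristic polynomial of $\chi_A$ factors as $\prod_{j=1}^{n}\bigl(t^{2}-2\Re(a_{jj})t+\abs{a_{jj}}^{2}\bigr)$, whose roots are exactly the standard forms of $a_{11},\dots,a_{nn}$ and their conjugates; this identifies the standard eigenvalues of $A$ with the standard forms of the diagonal entries, and the lower-triangular case is symmetric. Throughout, the recurring theme is that noncommutativity is what invalidates the naive elimination of (4) and forces the quaternionic-structure argument for the nonnegativity in (3), while the adjoint $\chi_A$ is precisely the device that converts each quaternionic assertion into a tractable complex-linear one.
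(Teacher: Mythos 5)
The paper does not actually prove this theorem: it is presented as a recollection of known facts with a pointer to Zhang, Baker and Rodman, so there is no in-paper argument to compare against. Your blind proof is, as far as I can check, correct and self-contained, and it follows the same route those references take, namely pushing everything through the complex adjoint matrix $\chi_A$ and the companion vector map $\psi$ (your identities $\chi_{AB}=\chi_A\chi_B$, $\psi(A\bmeta)=\chi_A\psi(\bmeta)$, $\psi(\bmeta\lambda)=\lambda\psi(\bmeta)$ all check out, as does $\overline{\chi_A}=J\chi_A J^{-1}$). The minimal-counterexample proof of statement (4), with the normalization $\alpha_i=1$ and the crucial choice to multiply the relation by $\lambda_1$ on the \emph{right}, is exactly the classical argument and is sound; the block-interleaving computation for (5) correctly identifies the standard eigenvalues of a triangular $A$ with the standard forms of its diagonal entries. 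Two points are stated more tersely than they deserve: first, the deduction that every real eigenvalue of $\chi_A$ has even algebraic multiplicity needs the observation that the antilinear map $T(v)=J\overline{v}$ preserves the \emph{generalized} eigenspace of a real eigenvalue (since $T$ commutes with $\chi_A$ and fixes real scalars), after which the even-dimensionality of a complex space carrying a quaternionic structure finishes the job; second, in statement (1) the phrase ``counted with multiplicity'' is really being \emph{defined} via $\chi_A$ rather than derived, which is the standard convention but worth saying explicitly. Neither is a genuine gap.
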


\subsection{Calculus on time scales }\label{S2.2}

The theory of time scales  has gained much popularity in recent years.     Bohner and Peterson together with their research collaborators, such as Agarwal and Ahlbrandt,  have greatly developed
the theory of time scales (see e.g. \cite{agarwal1999sturm,ahlbrandt2000hamiltonian,peterson2006henstock,bohner2010laplace}). An systematic introduction to  dynamic equations on time scales was given by Bohner and Peterson \cite{bohner2001dynamic}.  We adopt the standard notations in
\cite{bohner2001dynamic,bohner2002advances,agarwal2002dynamic}. A time scale is a nonempty closed subset of $\mathbb{R}$. There are some typical examples of time scales.
\begin{enumerate}[(i)]
	\item  $\mathbb{R}$ consists of  all real numbers.
	\item  $h\mathbb{Z}:=\{hk:k\in \mathbb{Z}\}$, where $\mathbb{Z}$ is the set of integers.
	\item   $2^{N_0}:=\{2^k:k\in N_0\}$, where $N_0$ is the set of nonnegative integers.
	\item  $\mathbb{P}_{a,b}:=\displaystyle \bigcup_{k=0}^{\infty}\left[k(a+b),k(a+b)+a\right]$, where $a,b$ are positive real constants.
\end{enumerate}
Throughout the paper, let $\T$ be a time scale. For $t\in\T$, the forward jump operator $\sigma$ and the backward jump operator $\rho$ are respectively defined by
\begin{equation*}
	\sigma(t):=\inf\{s\in \T: s>t\} ~~~\text{and}~~~ \rho(t):=\sup\{s\in \T: s<t\}.
\end{equation*}
If $\sigma(t)>t, \sigma(t)=t,\rho(t)<t, \rho(t)=t$, then $t$ is said to be right-scattered, right-dense, left-scattered,  left-dense, respectively. The graininess function $\mu :\T \to [0,\infty)$ and the set $\T^{\kappa}$ are respectively defined by
\begin{equation}\label{graininess function}
\mu(t):=\sigma(t)-t
\end{equation}
and
\begin{equation*}
	\T^{\kappa} :=
	\begin{cases}
		\T \setminus (\rho(\sup\T),\sup\T],& \text{if}~~\sup\T< \infty ;\\
		\T,&\text{if}~~\sup\T=\infty .
	\end{cases}
\end{equation*}

The classical time scales calculus is only concerned with the real-valued functions. With minor adjustments, some basic concepts of  time scales calculus  can also   be  carried to  quaternion-valued functions. We denote the set of all quaternion-valued functions which are defined on time scales $\T$ by $\H\otimes \T$.

\begin{definition}
	Assume that $f\in \H\otimes \T$ and let $t\in \T^{\kappa}$. The delta derivative $f^{\Delta}(t)$ is defined to be the number (provided it exists) with property that given any $\varepsilon>0$, there exists $\delta>0$ such that
	\begin{equation*}
		\abs{f(\sigma(t))-f(s)-f^{\Delta}(t)(\sigma(t)-s)}\leq \varepsilon \abs{\sigma(t)-s}
	\end{equation*}
	holds for all  $s\in U_{\delta}:=(t-\delta,t+\delta)\cap \T$.
\end{definition}

By writing  $f\in \H\otimes \T$ in the form of $f(t)=f_0(t)+f_1(t)\qi+f_2(t)\qj+f_3(t)\qk$ with $f_i\in \mathbb{R}\otimes \T$, it is easy to verify that $f$ is delta differentiable if and only if  $f_0,f_1,f_2,f_3$ are  delta differentiable. Moreover, if  $f$ is delta differentiable, then
\begin{equation*}
	f^{\Delta}(t)=f_0^{\Delta}(t)+f_1^{\Delta}(t)\qi+f_2^{\Delta}(t)\qj+f_3^{\Delta}(t)\qk.
\end{equation*}
It follows that some useful results concerning the delta derivative for real-valued functions in \cite{bohner2001dynamic} can be carried to quaternion-valued functions.
\begin{theorem}\label{delta derivative for q-functions}
	Assume that $f,g \in \H\otimes \T$ are delta differentiable at $t\in\T^{\kappa}$,  then the following statements hold.
	\begin{enumerate}[(i)]
		\item $f(\sigma(t))=f(t)+\mu(t)f^{\Delta}(t)$.
		\item  $f+g$ is delta differentiable at $t$ and $(f+g)^{\Delta}(t)=f^{\Delta}(t)+g^{\Delta}(t)$.
		\item For any $\alpha,\beta\in \H$, $\alpha  f \beta$ is delta differentiable at $t$ and
		$(\alpha f\beta)^{\Delta}(t)=\alpha f^{\Delta}(t)\beta$.
		\item The product $fg$ is delta differentiable at $t$ and
		\begin{equation*}
			(fg)^{\Delta}(t)=f^{\Delta}(t)g(t)+f(\sigma(t))g^{\Delta}(t)=f(t)g^{\Delta}(t)+f^{\Delta}(t)g(\sigma(t)).
		\end{equation*}
		\item  If $f(t)f(\sigma(t))\neq 0$ then $ \widetilde{f}(t)=\left(f(t)\right)^{-1}$ is delta differentiable at $t$ and
		\begin{equation*}
			\widetilde{f}^{\Delta}(t)=-(f(\sigma(t)))^{-1}f^{\Delta}(t)(f(t))^{-1}=-(f(t))^{-1} f^{\Delta}(t)(f(\sigma(t)))^{-1}.
		\end{equation*}
	\end{enumerate}
\end{theorem}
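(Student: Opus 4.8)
The plan is to exploit two features repeatedly: the componentwise characterisation of delta differentiability recorded just above (namely that $f$ is delta differentiable iff $f_0,f_1,f_2,f_3$ are, with $f^{\Delta}=f_0^{\Delta}+f_1^{\Delta}\i+f_2^{\Delta}\j+f_3^{\Delta}\k$), and the fact that the quaternion norm is multiplicative, $\abs{pq}=\abs{p}\abs{q}$, together with the observation that every increment $\sigma(t)-s$ is a real number and therefore commutes with all quaternions. This last point is what lets the noncommutative case be handled with essentially the classical estimates.

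For (1) I would read the defining inequality with the specific choice $s=t$. When $t$ is right-scattered, $\sigma(t)-t=\mu(t)>0$, so letting $\varepsilon\downarrow 0$ forces $f(\sigma(t))-f(t)-f^{\Delta}(t)\mu(t)=0$; when $t$ is right-dense, $\sigma(t)=t$ and $\mu(t)=0$, and both sides collapse to $f(t)$. Item (2) is immediate from the triangle inequality applied to the two defining inequalities for $f$ and $g$, or equally from the componentwise criterion. For (3), since $\sigma(t)-s\in\mathbb{R}$ commutes with $\beta$, the quantity $\alpha f(\sigma(t))\beta-\alpha f(s)\beta-\alpha f^{\Delta}(t)\beta(\sigma(t)-s)$ factors exactly as $\alpha\big[f(\sigma(t))-f(s)-f^{\Delta}(t)(\sigma(t)-s)\big]\beta$; taking norms and using multiplicativity bounds it by $\abs{\alpha}\abs{\beta}\varepsilon\abs{\sigma(t)-s}$, so rescaling $\varepsilon$ gives the claim with the constants $\alpha,\beta$ left outside on the correct sides.

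The substantive step is (4), and the only real care it demands is preserving the left/right order of the factors, since the two admissible orderings produce the two stated formulas. For the first formula I would use the algebraic identity
\begin{equation*}
 f(\sigma(t))g(\sigma(t))-f(s)g(s)=[f(\sigma(t))-f(s)]g(s)+f(\sigma(t))[g(\sigma(t))-g(s)],
\end{equation*}
subtract $[f^{\Delta}(t)g(t)+f(\sigma(t))g^{\Delta}(t)](\sigma(t)-s)$, and regroup into $\big[f(\sigma(t))-f(s)-f^{\Delta}(t)(\sigma(t)-s)\big]g(s)+f^{\Delta}(t)(\sigma(t)-s)[g(s)-g(t)]+f(\sigma(t))\big[g(\sigma(t))-g(s)-g^{\Delta}(t)(\sigma(t)-s)\big]$, again using that $\sigma(t)-s$ is real to move it past $g(t)$. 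Each bracket is controlled: the first and third by the defining inequalities for $f$ and $g$, and the middle by continuity of $g$ at $t$ (which follows from differentiability) together with local boundedness of $\abs{g(s)}$ and the fixed constants $\abs{f^{\Delta}(t)},\abs{f(\sigma(t))}$. Collecting terms bounds the whole expression by $\varepsilon\abs{\sigma(t)-s}$ times a finite constant depending only on $t$, and rescaling $\varepsilon$ finishes it; the second formula follows identically from the mirror identity $f(\sigma(t))g(\sigma(t))-f(s)g(s)=f(s)[g(\sigma(t))-g(s)]+[f(\sigma(t))-f(s)]g(\sigma(t))$.

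Finally, for (5) I would first secure differentiability of $\widetilde f$ rather than assume it: writing $f^{-1}=\overline f/\abs{f}^2$ with $\abs{f}^2=f_0^2+f_1^2+f_2^2+f_3^2$, the hypothesis $f(t)f(\sigma(t))\neq 0$ forces $f(t)\neq 0$ and $f(\sigma(t))\neq 0$, so $\abs{f}^2$ is a delta-differentiable real function nonvanishing at $t$ and $\sigma(t)$, and the four components of $\widetilde f$ are quotients of delta-differentiable real functions; the componentwise criterion then yields delta differentiability of $\widetilde f$. With this in hand I would differentiate the constant identity $f\widetilde f\equiv 1$ by the first product formula, obtaining $f^{\Delta}(t)\widetilde f(t)+f(\sigma(t))\widetilde f^{\Delta}(t)=0$ and hence $\widetilde f^{\Delta}(t)=-(f(\sigma(t)))^{-1}f^{\Delta}(t)(f(t))^{-1}$; differentiating instead $\widetilde f f\equiv 1$ (or using the second product formula) gives the symmetric expression $-(f(t))^{-1}f^{\Delta}(t)(f(\sigma(t)))^{-1}$. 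The main obstacle throughout is nothing beyond the bookkeeping of multiplication order in (4)--(5); the analytic content is exactly that of the classical real-valued proof in \cite{bohner2001dynamic}.
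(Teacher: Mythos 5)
Your proof is correct. It is worth noting, though, that it takes a more self-contained route than the paper does: the paper gives no formal proof of this theorem at all, instead observing just beforehand that $f$ is delta differentiable iff its four real components $f_0,f_1,f_2,f_3$ are, with $f^{\Delta}=f_0^{\Delta}+f_1^{\Delta}\i+f_2^{\Delta}\j+f_3^{\Delta}\k$, and then importing the real-valued results of \cite{bohner2001dynamic}; the accompanying remark disposes of the only genuinely noncommutative points by deriving the second form in item 4 from item 1 (via $g(\sigma(t))=g(t)+\mu(t)g^{\Delta}(t)$) and item 5 from item 4 with $g=\widetilde f$. You instead re-run the classical $\varepsilon$--$\delta$ arguments directly in $\H$, with the single structural observation that $\sigma(t)-s$ is real and hence commutes with all quaternions doing the work of keeping left and right factors in their correct positions; your decomposition of $f(\sigma(t))g(\sigma(t))-f(s)g(s)$ and its mirror image correctly produce the two orderings in item 4, and your treatment of item 5 (establishing differentiability of $\widetilde f$ componentwise from $\overline f/\abs{f}^2$ before differentiating $f\widetilde f\equiv 1$ by both product formulas) is more careful than the paper's, which tacitly assumes $\widetilde f$ is differentiable. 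The only small thing you lean on without comment is that delta differentiability of $g$ at $t$ implies continuity (hence local boundedness) of $g$ at $t$, needed for the middle term of your item 4 estimate; this is the quaternionic version of a standard fact from \cite{bohner2001dynamic} and follows componentwise, so it is a citation rather than a gap. Your approach buys a proof independent of the componentwise reduction; the paper's buys brevity at the cost of leaving the order-of-multiplication bookkeeping implicit.
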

\begin{remark}
	The statements 1,2,3 are easy to be understood.  The equality    $f^{\Delta}(t)g(t)+f(\sigma(t))g^{\Delta}(t)=f(t)g^{\Delta}(t)+f^{\Delta}(t)g(\sigma(t))$ can be seen from statement 1. Since $\H$ is noncommutative,  the equality $-(f(\sigma(t)))^{-1}f^{\Delta}(t)(f(t))^{-1}=-(f(t))^{-1} f^{\Delta}(t)(f(\sigma(t)))^{-1}$ is not obvious. But  this equality is true by invoking statement 4 (let  $g=\widetilde{f}$). We use an example to illustrate this result.
\end{remark}
\begin{example}
	Let $\T=\mathbb{Z}$, $f(t)=1+\qi+t \qj$. Then we have
	\begin{equation*}
		\begin{split}
			\widetilde{f}(t) =& (1-\qi-t\qj )(2+t^2)^{-1}, \\
			f(\sigma(t)) =  &  f(t+1)=1+\qi+(t+1)\qj,  \\
			f^{\Delta}(t) =& f(t+1)-f(t)=\qj.
		\end{split}
	\end{equation*}
	By direct computation,
	\begin{equation*}
		-(f(\sigma(t)))^{-1}f^{\Delta}(t)(f(t))^{-1}=-(f(t))^{-1} f^{\Delta}(t)(f(\sigma(t)))^{-1}
		=    \widetilde{f}^{\Delta}(t)
		=     \widetilde{f}(t+1)- \widetilde{f}(t).
	\end{equation*}
	They are equal to
	\begin{equation*}
		(-(2t+1)+(2t+1)\qi+(2-t^2-t)\qj)(2+t^2)^{-1}(2+(t+1)^2)^{-1}.
	\end{equation*}
\end{example}

To describe integrable quaternion-valued functions on time scales, we need to introduce the concept of rd-continuous. The rd-continuity of real-valued functions was defined by Bohner  \emph{et al.} \cite{bohner2001dynamic}.
\begin{definition}
	A real-valued function is called rd-continuous if it is continuous at right-dense points and its left-sided limits exist (finite) at left-dense points.
\end{definition}

Bohner  \emph{et al.} \cite{bohner2001dynamic}  proved that every rd-continuous function has an antiderivative. Next we introduce the rd-continuity and  integrability  of quaternion-valued functions.

We say that   $f=f_0+f_1\qi+f_2\qj+f_3\qk \in \H\otimes \T$ is rd-continuous provided that its every real components  $ f_0, f_1,f_2,f_3$  are rd-continuous. For every  rd-continuous function $f$, we define the integral by
\begin{equation*}
	\begin{split}
		\int_r^sf(t) \Delta t:= &   \int_r^sf_0(t) \Delta t+ \qi \int_r^sf_1(t) \Delta t  + \qj \int_r^sf_2(t) \Delta t+ \qk \int_r^sf_3(t) \Delta t\\
		= &  F_0(s)-F_0(r)+ (F_1(s)-F_1(r))\qi + (F_2(s)-F_2(r))\qj + (F_3(s)-F_3(r))\qk\\
		=&F(s)-F(r),
	\end{split}
\end{equation*}
where $F_i^{\Delta}(t)=f_i(t),  (0\leq i \leq 3)$   and $F^{\Delta}(t)= f(t)$ for $t\in \T^{\kappa}$.

From above discussion,  we have the following two theorems.
\begin{theorem}\label{prop of integral}
	If $f\in\H\otimes \T$ is rd-continuous and $t\in \T^{\kappa}$, then
	\begin{equation*}
		\int_t^{\sigma(t)} f(\tau)\Delta \tau=\mu(t)f(t).
	\end{equation*}
\end{theorem}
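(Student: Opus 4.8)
The plan is to reduce the statement to the antiderivative definition of the delta integral combined with the elementary jump formula recorded as statement~1 of Theorem~\ref{delta derivative for q-functions}. Since $f\in\H\otimes\T$ is rd-continuous, the discussion preceding the theorem guarantees that $f$ possesses an antiderivative $F\in\H\otimes\T$ with $F^{\Delta}(t)=f(t)$ for all $t\in\T^{\kappa}$, and by the very definition of the quaternion-valued integral one has $\int_t^{\sigma(t)}f(\tau)\Delta\tau=F(\sigma(t))-F(t)$.

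Applying statement~1 of Theorem~\ref{delta derivative for q-functions} to the delta-differentiable function $F$ at the point $t$ yields $F(\sigma(t))=F(t)+\mu(t)F^{\Delta}(t)$, and hence $F(\sigma(t))-F(t)=\mu(t)F^{\Delta}(t)=\mu(t)f(t)$. Chaining this with the previous display gives $\int_t^{\sigma(t)}f(\tau)\Delta\tau=\mu(t)f(t)$, which is the assertion. This keeps the argument entirely within the quaternion setting and never requires splitting into components.

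As an alternative (and a sanity check) I would give the componentwise argument: writing $f=f_0+f_1\i+f_2\j+f_3\k$ with each $f_m\in\mathbb{R}\otimes\T$ rd-continuous, the quaternion integral decomposes, by definition, into four real delta integrals. The classical real-valued identity $\int_t^{\sigma(t)}f_m(\tau)\Delta\tau=\mu(t)f_m(t)$ from \cite{bohner2001dynamic} applies to each component, and since the graininess $\mu(t)$ is a real scalar it commutes with $\i,\j,\k$; recombining the four scalar results therefore reproduces $\mu(t)\bigl(f_0(t)+f_1(t)\i+f_2(t)\j+f_3(t)\k\bigr)=\mu(t)f(t)$.

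I do not expect any genuine obstacle here. The only point deserving care is the \emph{well-definedness} of the integral, i.e.\ the existence of the antiderivative $F$; this is exactly what rd-continuity secures via the construction preceding the statement, and once it is in hand the identity is a one-line consequence of the jump formula. In short, the main step is simply to recognize that $\int_t^{\sigma(t)}$ of a delta derivative telescopes to a single jump, whose size is governed by $\mu(t)$.
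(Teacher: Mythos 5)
Your proof is correct and follows exactly the route the paper intends: the paper offers no explicit proof but derives the theorem "from above discussion," i.e.\ from the antiderivative definition of the quaternion delta integral together with the jump formula $F(\sigma(t))=F(t)+\mu(t)F^{\Delta}(t)$ of Theorem~\ref{delta derivative for q-functions}, which is precisely your main argument. Your componentwise alternative is likewise consistent with how the paper constructs the integral, so nothing further is needed.
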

\begin{theorem}
	Let $a,b\in \T$ and suppose that $f\in \H\otimes \T$ is   rd-continuous.
	\begin{enumerate}
		\item If $\T=\mathbb{R}$, then
		\begin{equation*}
			\int_a^b f(t)\Delta t=\int_a^b f(t) dt.
		\end{equation*}
		Namely, it is the classical integral from calculus.
		\item  If $[a,b]\cap \T$ contains only isolated points, then
		\begin{equation*}
			\int_a^b f(t)\Delta t= \begin{cases}
				\sum_{t\in[a,b)\cap \T}\mu(t) f(t),& \text{if}~~a<b ;\\
				0,& \text{if}~~a=b ;\\
				- \sum_{t\in[b,a)\cap \T}\mu(t) f(t),&\text{if} ~~a>b .
			\end{cases}
		\end{equation*}
	\end{enumerate}
\end{theorem}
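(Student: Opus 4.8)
The plan is to reduce both statements to the already-known facts about $\Delta$-integrals of \emph{real-valued} functions on time scales, established by Bohner and Peterson \cite{bohner2001dynamic}, exploiting the fact that the quaternion integral was defined component-wise. Writing $f=f_0+f_1\i+f_2\j+f_3\k$ with each $f_m\in\mathbb{R}\otimes\T$ rd-continuous, the definition of the quaternion $\Delta$-integral gives
\[
  \int_a^b f(t)\Delta t=\int_a^b f_0(t)\Delta t+\i\int_a^b f_1(t)\Delta t+\j\int_a^b f_2(t)\Delta t+\k\int_a^b f_3(t)\Delta t .
\]
Since, by definition, rd-continuity of $f$ is precisely rd-continuity of each $f_m$, every real component is $\Delta$-integrable and the classical results apply to each of the four integrals on the right.

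For statement 1, when $\T=\mathbb{R}$ the classical theorem yields $\int_a^b f_m(t)\Delta t=\int_a^b f_m(t)\,dt$ for $m=0,1,2,3$. Substituting these four equalities into the component-wise formula and recollecting the quaternion units gives $\int_a^b f(t)\Delta t=\int_a^b f(t)\,dt$, where the right-hand side is the (componentwise) classical integral of the quaternion-valued function. The recombination is legitimate because each $\int_a^b f_m\,dt$ is a real number and hence commutes with $\i,\j,\k$. For statement 2, I would invoke the analogous real-valued formula for the case in which $[a,b]\cap\T$ consists of isolated points only: $\int_a^b f_m(t)\Delta t=\sum_{t\in[a,b)\cap\T}\mu(t)f_m(t)$ when $a<b$, the value $0$ when $a=b$, and the sign-reversed sum over $[b,a)\cap\T$ when $a>b$. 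Feeding these into the component-wise formula and using $\mu(t)\in\mathbb{R}$ (so that $\mu(t)f_m(t)$ reassembles into $\mu(t)f(t)$) reproduces exactly the three cases of the statement.

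There is essentially no deep obstacle: the whole content is the observation that the quaternion $\Delta$-integral is, by construction, a real-linear combination of four real $\Delta$-integrals, and that multiplication by $\i,\j,\k$ — all commuting with real scalars — passes through the finite sums and the classical integrals. The most delicate point, if any, is merely bookkeeping: ensuring that the orientation convention $\int_a^b=-\int_b^a$ and the half-open index set $[a,b)\cap\T$ agree with those in the cited real-valued reference, so that no boundary term at $t=b$ is erroneously included and the sign in the $a>b$ case comes out correctly.
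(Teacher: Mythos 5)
Your proposal is correct and matches the paper's intent exactly: the paper gives no explicit proof, merely noting that both theorems follow "from above discussion," i.e., from the component-wise definition of the quaternion $\Delta$-integral together with the known real-valued results of Bohner and Peterson. Your componentwise reduction and recombination (using that $\i,\j,\k$ commute with real scalars) is precisely the argument the paper leaves implicit.
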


\section{First order linear QDETS}\label{S3}

In this section, we will study the first order linear QDETS and its corresponding initial value problems. Firstly, we need to introduce some auxiliary concepts.

\begin{definition}
	A function $p\in \H\otimes\T$ is said to be  regressive  if
	\begin{equation}\label{regressive}
	1+\mu(t)p(t)\neq 0,~~~ \text{for all}~~~t\in \T^\kappa.
	\end{equation}
\end{definition}

The set of all regressive and rd-continuous quaternion-valued functions is denoted by $\rc(\T,\H)$.
By similar arguments to  $(\H_h, \oplus)$ , we know that $\rc(\T,\H)$ is a group under addition $\oplus$ which is defined by
\begin{equation*}
	(p\oplus q)(t):=p(t)+q(t)+\mu(t)  p(t) q(t) ,~~~ \text{for all}~~~t\in \T^\kappa,
\end{equation*}
where $p,q\in \rc(\T,\H)$. Based on  the definition of quaternion cylinder transformation (\ref{quaternion cylinder transformation}), the generalized quaternion exponential function for $p \in \rc(\T,\H)$ is defined by
\begin{equation*}
	E_p(t,s):= \exp \left(\int_s^t\xi_{\mu(\tau)}(p(\tau))\Delta\tau\right).
\end{equation*}

Clearly, the generalized quaternion exponential function never be zero for any $p \in \rc(\T,\H)$. We proceed by presenting some important properties of  $ E_p(t,s)$.
\begin{lemma}\label{lemma of commute pro of exponential}
	If $p(t)=\alpha\in \H \setminus \mathbb{R}$ is a quaternion constant, then $p\in \rc(\T,\H)$ and $E_p(t,r) E_p(r,s)=E_p(t,s)$ for all $r,s,t\in \T$.
\end{lemma}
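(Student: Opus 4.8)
The plan is to establish the two assertions in turn. The first is a quick verification; the second rests on the observation that, because $p$ is a \emph{constant} non-real quaternion, every value of the integrand $\xi_{\mu(\tau)}(\alpha)$ is confined to a single copy of the complex plane, which is exactly what restores commutativity.

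First I would check membership in $\rc(\T,\H)$. Since $p\equiv\alpha$ is constant it is trivially rd-continuous, so only regressivity needs attention. Writing $\alpha=\alpha_0+\Im(\alpha)$ with $\Im(\alpha)\neq0$ (as $\alpha\notin\mathbb{R}$), the quantity $1+\mu(t)\alpha$ has vector part $\mu(t)\Im(\alpha)$. If $\mu(t)>0$ this vector part is nonzero, so $1+\mu(t)\alpha\neq0$; if $\mu(t)=0$ then $1+\mu(t)\alpha=1\neq0$. Hence (\ref{regressive}) holds for every $t\in\T^\kappa$ and $p\in\rc(\T,\H)$. This is precisely where the hypothesis $\alpha\notin\mathbb{R}$ delivers regressivity automatically.

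The heart of the argument is to show the integrand never leaves one complex plane. Set $\bm\varsigma=\Im(\alpha)/\abs{\Im(\alpha)}$, so that $\alpha\in\mathbb{C}_{\bm\varsigma}$. I would verify $\xi_{\mu(\tau)}(\alpha)\in\mathbb{C}_{\bm\varsigma}$ for all $\tau$: when $\mu(\tau)=0$ the value is just $\alpha$; when $\mu(\tau)>0$, the element $1+\mu(\tau)\alpha$ lies in $\mathbb{C}_{\bm\varsigma}$ and is non-real (its vector part $\mu(\tau)\Im(\alpha)$ is nonzero), so the non-real branch of $\Ln$ applies, and since $\Ln$ sends non-real elements of $\mathbb{C}_{\bm\varsigma}$ back into $\mathbb{C}_{\bm\varsigma}$ (its value $\ln\abs{\cdot}+\frac{\Im(\cdot)}{\abs{\Im(\cdot)}}\Arg(\cdot)$ has $\frac{\Im(\cdot)}{\abs{\Im(\cdot)}}=\pm\bm\varsigma$), the quotient $\frac{1}{\mu(\tau)}\Ln(1+\mu(\tau)\alpha)$ again lies in $\mathbb{C}_{\bm\varsigma}$. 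Because the delta integral is computed componentwise and $\mathbb{C}_{\bm\varsigma}$ is a real-linear subspace closed under limits, both $\int_s^r$ and $\int_r^t$ of $\xi_{\mu(\tau)}(\alpha)\,\Delta\tau$ belong to $\mathbb{C}_{\bm\varsigma}$.

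Finally I would exploit commutativity. Writing $A=\int_r^t\xi_{\mu(\tau)}(\alpha)\,\Delta\tau$ and $B=\int_s^r\xi_{\mu(\tau)}(\alpha)\,\Delta\tau$, both lie in the commutative field $\mathbb{C}_{\bm\varsigma}$, so $AB=BA$; the series definition of $\exp$ then gives $\exp(A)\exp(B)=\exp(A+B)$ exactly as in the complex case. Combined with additivity of the delta integral, $A+B=\int_s^t\xi_{\mu(\tau)}(\alpha)\,\Delta\tau$, this yields $E_p(t,r)E_p(r,s)=\exp(A)\exp(B)=\exp(A+B)=E_p(t,s)$. The main obstacle is this middle step: without confinement to a single plane $\mathbb{C}_{\bm\varsigma}$ the exponents $A$ and $B$ need not commute and the identity would fail — which is exactly why the lemma restricts to constant, non-real $p$.
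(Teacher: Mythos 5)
Your proposal is correct and follows essentially the same route as the paper: both arguments rest on the observation that every value of $\xi_{\mu(\tau)}(\alpha)$ lies in the commutative subfield $\mathbb{C}_{\bm\varsigma}$ with $\bm\varsigma=\Im(\alpha)/\abs{\Im(\alpha)}$, so the two integrals commute and $\exp$ is additive on them. You merely spell out the details that the paper leaves as ``obvious'' (the regressivity check and the verification that $\Ln(1+\mu(\tau)\alpha)$ stays in $\mathbb{C}_{\bm\varsigma}$), which is a welcome but not substantively different elaboration.
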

\begin{proof}
	Obviously, $p\in \rc(\T,\H)$. Let $r,s,t\in \T$, then both $\int^t_r \xi_{\mu(\tau)}(p(\tau))\Delta\tau$ and $\int^r_s \xi_{\mu(\tau)}(p(\tau))\Delta\tau$ are  $\mathbb{C}_{\bm\varsigma}$-valued, where ${\bm\varsigma}={ \frac{\mathfrak{I}(\alpha)}{\abs{\mathfrak{I}(\alpha)}}}$. Therefore
	\begin{equation*}
		\begin{split}
			E_p(t,r) E_p(r,s) & =\exp \left(\int_r^t\xi_{\mu(\tau)}(p(\tau))\Delta\tau\right)\exp \left(\int_s^r\xi_{\mu(\tau)}(p(\tau))\Delta\tau\right)\\
			& =\exp \left(\int_r^t\xi_{\mu(\tau)}(p(\tau))\Delta\tau + \int_s^r\xi_{\mu(\tau)}(p(\tau))\Delta\tau\right)\\
			&= \exp \left(\int_s^t\xi_{\mu(\tau)}(p(\tau))\Delta\tau\right)\\
			&= E_p(t,s)
		\end{split}
	\end{equation*}
	which completes the proof.
\end{proof}
\begin{lemma}\label{right-scattered exponential}
	Suppose that $\alpha\in \H \setminus \mathbb{R}$ and $\mu(t)>0$, then  $E_{\alpha}(\sigma(t),t)-1=\alpha \mu(t)$.
\end{lemma}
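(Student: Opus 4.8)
The plan is to unwind the definition of $E_\alpha(\sigma(t),t)$ at the right-scattered point $t$ and reduce the whole statement to the single identity $\exp(\Ln(q))=q$ for a non-real quaternion $q$. Straight from the definition of the generalized quaternion exponential, $E_\alpha(\sigma(t),t)=\exp\bigl(\int_t^{\sigma(t)}\xi_{\mu(\tau)}(\alpha)\,\Delta\tau\bigr)$. Since $\alpha$ is a constant and $\mu(t)>0$, Theorem \ref{prop of integral} evaluates the integral over the degenerate interval as $\int_t^{\sigma(t)}\xi_{\mu(\tau)}(\alpha)\,\Delta\tau=\mu(t)\,\xi_{\mu(t)}(\alpha)$. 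Feeding in the cylinder transformation (\ref{quaternion cylinder transformation}) for the case $h=\mu(t)>0$ gives $\mu(t)\,\xi_{\mu(t)}(\alpha)=\mu(t)\cdot\frac{1}{\mu(t)}\Ln(1+\alpha\mu(t))=\Ln(1+\alpha\mu(t))$, so that $E_\alpha(\sigma(t),t)=\exp\bigl(\Ln(1+\alpha\mu(t))\bigr)$.

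It then remains to show $\exp(\Ln(q))=q$ with $q:=1+\alpha\mu(t)$. I would first check that $q\in\H\setminus\mathbb{R}$: because $\alpha\in\H\setminus\mathbb{R}$ has nonzero vector part and $\mu(t)>0$, we have $\Im(q)=\mu(t)\Im(\alpha)\neq0$, so the first branch of $\Ln$ applies and $\Ln(q)=\ln(\abs{q})+\frac{\Im(q)}{\abs{\Im(q)}}\Arg(q)$. Writing $\bm{\varsigma}:=\frac{\Im(q)}{\abs{\Im(q)}}$, this is $\Ln(q)=\ln\abs{q}+\bm{\varsigma}\,\Arg(q)$, whose scalar part is $\ln\abs{q}$ and whose vector part is $\bm{\varsigma}\,\Arg(q)$ of modulus $\Arg(q)$. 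Applying the closed form of $\exp$ directly then yields $\exp(\Ln(q))=\exp(\ln\abs{q})\bigl(\cos\Arg(q)+\bm{\varsigma}\sin\Arg(q)\bigr)=\abs{q}\bigl(\cos\Arg(q)+\bm{\varsigma}\sin\Arg(q)\bigr)$. Since $\cos\Arg(q)=\frac{\Re(q)}{\abs{q}}$ by the definition of $\Arg$, and $\sin\Arg(q)=\frac{\abs{\Im(q)}}{\abs{q}}$ (the positive root being correct because $\Arg(q)\in(0,\pi)$ for non-real $q$), the bracket collapses to $\frac{\Re(q)+\Im(q)}{\abs{q}}$, giving $\exp(\Ln(q))=\Re(q)+\Im(q)=q$. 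Substituting back, $E_\alpha(\sigma(t),t)=1+\alpha\mu(t)$, and subtracting $1$ is the claim.

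I expect the only genuine subtlety to lie in this last identity, specifically in the branch bookkeeping: one must confirm that $q$ is non-real so that $\bm{\varsigma}=\Im(q)/\abs{\Im(q)}$ is well defined and that $\Arg(q)\in(0,\pi)$, which forces $\sin\Arg(q)\geq0$ and hence pins down the sign of the root $\sin\Arg(q)=\abs{\Im(q)}/\abs{q}$. Everything preceding it --- the reduction to $\exp(\Ln(1+\alpha\mu(t)))$ --- is a mechanical substitution using Theorem \ref{prop of integral} and the definition of the cylinder transformation $\xi_h$, with no noncommutativity issues since $\alpha$ is constant along the single-point integration.
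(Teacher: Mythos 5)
Your proposal is correct and follows essentially the same route as the paper: reduce $E_\alpha(\sigma(t),t)$ via Theorem \ref{prop of integral} and the cylinder transformation to $\exp\bigl(\Ln(1+\alpha\mu(t))\bigr)$, note that $1+\alpha\mu(t)$ is non-real, and conclude $\exp(\Ln(q))=q$. The only difference is that you spell out the verification of $\exp(\Ln(q))=q$ (branch and sign bookkeeping) which the paper simply asserts; that added detail is sound.
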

\begin{proof}
	Since  $\alpha\in \H \setminus \mathbb{R}$ and $\mu(t)>0$, then by Theorem
	\ref{prop of integral}
	\begin{equation*}
		\begin{split}
			E_{\alpha}(\sigma(t),t)&= \exp \left(\int_t^{\sigma(t)}\xi_{\mu(\tau)}(\alpha)\Delta\tau\right)   \\
			& = \exp \left( \mu(t)\xi_{\mu(t)}(\alpha) \right)  \\
			&=\exp \left(\Ln(1+\alpha \mu(t)) \right).
		\end{split}
	\end{equation*}
	Observe that $1+\alpha \mu(t)$ is  certainly not real-valued. It follows that  $$\exp \left(\Ln(1+\alpha \mu(t)) \right)=1+\alpha \mu(t),$$
	which completes the proof.
\end{proof}

Now we turn to study the following first order linear QDETS.
\begin{definition}
	If $p\in\rc(\T,\H)$, then the first order linear quaternion dynamic equation
	\begin{equation}\label{regressive equation}
	y^{\Delta}(t)=p(t)y(t)
	\end{equation}
	is called regressive. For any fixed $t_0 \in \T$ and $c_0\in \H$, the corresponding
	initial value problem is
	\begin{equation}\label{1D IVP}
	y^{\Delta}(t)=p(t)y(t),~~~y(t_0)=c_0.
	\end{equation}
\end{definition}

By similar arguments to Theorem 5.8 in \cite{bohner2001dynamic}, the initial value problem (\ref{1D IVP}) has exactly a unique solution.
\begin{remark}
	Let $\psi_p(t,t_0)$ denotes the unique solution of (\ref{1D IVP}) with $c_0=1$.
	In the classical case, we know that $\psi_p(t,s)=E_p(t,s)$. This assertion, however, is no longer true in the quaternion case.   Namely, $\psi_p(t,s)$ is not equal to $E_p(t,s)$ in general.
\end{remark}

\begin{example}\label{not exp}
	Consider the time scale $\T=\mathbb{Z}$. Let $p(t)=1+ {\qi} t+j$, then
	\begin{equation*}
		\begin{split}
			\xi_{\mu(\tau)}(p(\tau)) &=\frac{1}{\mu(\tau)}\Ln(1+p(\tau)) \\
			& = \frac{1}{2}\ln (5+\tau^2)+\frac{\qi \tau+\qj}{\sqrt{1+\tau^2}}\arccos \frac{2}{\sqrt{5+\tau^2}}.
		\end{split}
	\end{equation*}
	By direct computation, we have
	\begin{equation*}
		\int^1_0 \xi_{\mu(\tau)}(p(\tau)) \Delta \tau=\frac{1}{2}\ln5 + \qj \arccos\frac{2}{\sqrt{5}}.
	\end{equation*}
	Thus
	\begin{equation*}
		E_p(1,0)=\exp(\frac{1}{2}\ln5 + \qj \arccos\frac{2}{\sqrt{5}})=2+\qj.
	\end{equation*}
	Let $y(t)=E_p(t,0)$, we have
	\begin{equation*}
		y^{\Delta}(0)=y(1)-y(0)=2+\qj\neq 1+\qj=p(0)y(0),
	\end{equation*}
	which implies $\psi_p(t,0)\neq E_p(t,0)$.
\end{example}

Fortunately,  under some suitable conditions,   $\psi_p(t,s)$ is still equal to $E_p(t,s)$.
To prove this result, we give an useful lemma first.

\begin{lemma}\label{continuous of xi}
	If $\sigma(t)=t$ and $p(t)=\alpha\in \H\setminus \mathbb{R}$ is a quaternion constant function. Then
	$$\displaystyle \lim_{\tau\to t} \xi_{\mu(\tau)}(p(\tau))=\alpha. $$
\end{lemma}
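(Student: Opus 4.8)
The plan is to decouple the statement into two independent facts and then compose them: first, that the graininess $\mu(\tau)$ tends to $0$ as $\tau\to t$ whenever $t$ is right-dense, and second, that the map $h\mapsto\xi_h(\alpha)$ is continuous at $h=0^+$ with limiting value $\alpha$. Once both are in hand, writing $g(h):=\xi_h(\alpha)$ for $h\ge 0$ (so that $g(0)=\alpha$ by the definition (\ref{quaternion cylinder transformation})), the desired conclusion is just the composite limit $g(\mu(\tau))\to g(0)=\alpha$.

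For the first fact I would argue separately on the two sides of $t$. For $\tau\in\T$ with $\tau<t$, the point $t$ itself lies in $\{s\in\T:s>\tau\}$, so $\sigma(\tau)\le t$ and hence $0\le\mu(\tau)=\sigma(\tau)-\tau\le t-\tau\to 0$ as $\tau\uparrow t$; this side needs nothing beyond $t\in\T$. The right side is where the hypothesis $\sigma(t)=t$ is essential. Here I would argue by contradiction: if $\mu(\tau)\not\to 0$ as $\tau\downarrow t$, there is $\varepsilon_0>0$ and a sequence $\tau_n>t$ with $\tau_n\to t$ and $\mu(\tau_n)\ge\varepsilon_0$; since $t$ is right-dense such $\tau_n$ exist, and passing to a subsequence I may take $\tau_n$ strictly decreasing. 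But then $\tau_n\in\T$ and $\tau_n>\tau_{n+1}$ force $\sigma(\tau_{n+1})\le\tau_n$, so $\mu(\tau_{n+1})\le\tau_n-\tau_{n+1}\to 0$, contradicting $\mu(\tau_{n+1})\ge\varepsilon_0$. Hence $\lim_{\tau\to t}\mu(\tau)=0$.

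For the second fact I would use that $\alpha\notin\mathbb{R}$ to place everything inside a commutative copy of the complex numbers. With $\bm\varsigma=\Im(\alpha)/\abs{\Im(\alpha)}$ one has $\alpha\in\C_{\bm\varsigma}$ and, for $h>0$, $1+\alpha h\in\C_{\bm\varsigma}$ with $\Im(1+\alpha h)/\abs{\Im(1+\alpha h)}=\bm\varsigma$. Under the isomorphism $\C_{\bm\varsigma}\cong\mathbb{C}$ sending $\bm\varsigma\mapsto i$, the principal logarithm $\Ln$ restricts to the ordinary complex principal logarithm near the positive real axis, so $\tfrac1h\Ln(1+\alpha h)$ corresponds to $\tfrac1h\log(1+zh)$ with $z\leftrightarrow\alpha$. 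The elementary limit $\tfrac1h\log(1+zh)\to z$ as $h\to 0^+$ then yields $\xi_h(\alpha)\to\alpha$, which is the right-continuity of $g$ at $0$.

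I expect the main obstacle to be the right-hand estimate in the first fact: unlike the left side there is no a priori bound $\mu(\tau)\le t-\tau$, and $\mu$ need not be continuous, so the real content is that right-density of $t$ squeezes the graininess of nearby points—captured above by the monotone-subsequence argument together with the inequality $\sigma(\tau_{n+1})\le\tau_n$. The second fact is routine once the reduction to $\C_{\bm\varsigma}$ is made; the only point requiring a word of care is that $1+\alpha h$ stays on the principal branch (its argument is small and positive for small $h>0$), which is immediate here.
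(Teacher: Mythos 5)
Your proof is correct and follows essentially the same route as the paper's: both decompose the statement into the two facts that $\mu(\tau)\to 0$ as $\tau\to t$ at a right-dense point (using the inequality $\sigma(\tau)\le\tau'$ for $\tau<\tau'$ in $\T$) and that $\xi_h(\alpha)\to\alpha$ as $h\to 0^+$, then combine them. The only cosmetic differences are that you obtain the second limit by passing to $\mathbb{C}_{\bm\varsigma}\cong\mathbb{C}$ and citing $\tfrac1h\log(1+zh)\to z$, where the paper computes the scalar and vector parts of $\tfrac1h\Ln(1+\alpha\mu(\tau))$ as two explicit real limits, and that you replace the paper's explicit $\varepsilon$-argument over the sets $\{\mu>0\}$ and $\{\mu=0\}$ by a composition of limits.
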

\begin{proof}
	If there exists a number $\delta_0>0$ such that $\mu(\tau)=0$ for all $\tau \in U_{\delta_0}$. Then $$\displaystyle \lim_{\tau\to t} \xi_{\mu(\tau)}(p(\tau))=\lim_{\tau\to t}\xi_0 (\alpha)=\alpha.$$
	
	Otherwise,  for any $\delta>0$, there exists a number $\tau\in U_{\delta}$  such that $\mu(\tau)>0$. Let $\delta_1=1$, define
	\begin{equation*}
		D_1:=\{\tau \in U_{\delta_1}:\mu(\tau)>0\}~~~\text{and}~~~D_2:=\{\tau\in  U_{\delta_1}:\mu(\tau)=0 \},
	\end{equation*}
	then $D_1$ contains infinite elements. Now we claim that $\displaystyle \lim_{\tau\to t}\mu(\tau)=0=\mu(t)$. Since $\sigma(t)=t$,  then there exists a strictly decreasing  sequence $\{\tau_n\}\subset \T$ such that $\displaystyle \lim_{n\to \infty}\tau_n=t$. For any $\tau$ satisfying  $0< \tau-t<\tau_1$, there exists positive integer   $n_1$  such that $\tau_{n_1+1}>\tau>\tau_{n_1}$. Thus $\sigma(\tau)\leq \tau_{n_1}$. It follows that $\mu(\tau)=\sigma(\tau)-\tau \leq \tau_{n_1}-\tau\leq \tau_{n_1+1}-\tau_{n_1}$. Therefore
	\begin{equation*}
		\lim_{\tau\to t^+} \mu(\tau)=\lim_{n_1\to \infty}( \tau_{n_1+1}-\tau_{n_1})=0.
	\end{equation*}
	For any $\tau<t$, we have $\tau\leq \sigma(\tau)\leq t$. Then $\mu(\tau)=\sigma(\tau)-\tau\leq t- \tau$. Thus
	\begin{equation*}
		0\leq  \lim_{\tau\to t^-} \mu(\tau)\leq  \lim_{\tau\to t^-} (t-\tau)=0.
	\end{equation*}
	Then we have
	\begin{equation*}
		\begin{split}
			\lim_{{\tau\to t}\atop {\tau\in D_1}}\xi_{\mu(\tau)}(\alpha)&  = \lim_{{\tau\to t}\atop {\tau\in D_1}}\frac{1}{\mu(\tau)} \left (\ln\abs{1+\alpha \mu(\tau)}+\frac{\Im(\alpha)}{\abs{\Im(\alpha)}} \arccos\frac{1+\mu(\tau)\Re(\alpha)}{\abs{1+\alpha \mu(\tau)}}    \right)\\
			& =\Re(\alpha)+\frac{\Im(\alpha)}{\abs{\Im(\alpha)}}\abs{\Im(\alpha)}=\alpha
		\end{split}
	\end{equation*}
	by invoking the following two limits
	\begin{equation*}
		\lim_{h\to 0} \, \frac{\ln \left( \abs{1+h \Re(\alpha)} ^2+\abs{\Im(\alpha) h}^2\right)}{2 h}=\Re(\alpha),
	\end{equation*}
	\begin{equation*}
		\lim_{h\to 0} \, \arccos \frac{   1+h \Re(\alpha) }{ h \sqrt{ \abs{1+h \Re(\alpha)} ^2+\abs{\Im(\alpha) h}^2} }=\abs{\Im(\alpha)}.
	\end{equation*}
	In other words, for any $\varepsilon>0$, there exists positive number $\delta_2<\delta_1$ such that $\abs{\xi_{\mu(\tau)}(\alpha)-\alpha}<\varepsilon$ for all $\tau\in U_{\delta_2}\cap D_1$.
	Note that for all $\tau\in  U_{\delta_2}\cap D_2$, $\abs{\xi_{\mu(\tau)}(\alpha)-\alpha}=\abs{\xi_{0}(\alpha)-\alpha}=0<\varepsilon$. Hence
	$\abs{\xi_{\mu(\tau)}(\alpha)-\alpha}<\varepsilon$ holds for all $\tau \in  U_{\delta_2}$ which completes the proof.
\end{proof}
\begin{theorem}\label{exp of q-constant}
	If $p\in \rc(\T,\H)$ is real-valued or $p(t)=\alpha\in \H\setminus \mathbb{R}$ is a quaternion constant function. Then $\psi_p(t,s)=E_p(t,s)$ for all  $s,t \in \T$.
\end{theorem}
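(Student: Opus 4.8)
The plan is to verify that $y(t):=E_p(t,s)$ is itself a solution of the initial value problem (\ref{1D IVP}) with $t_0=s$ and $c_0=1$; since that problem has a unique solution (as noted after its definition), this forces $\psi_p(t,s)=E_p(t,s)$. The initial condition is immediate because $E_p(s,s)=\exp(0)=1$, so the whole task reduces to proving the dynamic identity $E_p^{\Delta}(t,s)=p(t)E_p(t,s)$ for every $t\in\T^{\kappa}$. The structural observation that makes this tractable is that in both cases the exponent $\int_s^{t}\xi_{\mu(\tau)}(p(\tau))\Delta\tau$ takes values in a single commutative subfield $\mathbb{C}_{\bm\varsigma}$, where $\bm\varsigma=\i$ when $p$ is real-valued and $\bm\varsigma=\Im(\alpha)/\abs{\Im(\alpha)}$ when $p(t)=\alpha\in\H\setminus\mathbb{R}$. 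Within $\mathbb{C}_{\bm\varsigma}$ the multiplication is commutative and $\exp$ is a genuine homomorphism, so $\exp(A+B)=\exp(A)\exp(B)$; in particular the semigroup identity $E_p(t,r)E_p(r,s)=E_p(t,s)$ of Lemma \ref{lemma of commute pro of exponential} is available (and holds likewise in the real case), and $p(t)$ commutes with $E_p(t,s)$.

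I would first treat a right-scattered point $t$, where $\mu(t)>0$. By statement 1 of Theorem \ref{delta derivative for q-functions} the delta derivative equals $E_p^{\Delta}(t,s)=\mu(t)^{-1}\bigl(E_p(\sigma(t),s)-E_p(t,s)\bigr)$. Factoring with the semigroup property gives $E_p(\sigma(t),s)=E_p(\sigma(t),t)E_p(t,s)$, hence
\begin{equation*}
 E_p^{\Delta}(t,s)=\frac{E_p(\sigma(t),t)-1}{\mu(t)}\,E_p(t,s).
\end{equation*}
Lemma \ref{right-scattered exponential} evaluates $E_\alpha(\sigma(t),t)-1=\alpha\mu(t)$ in the non-real constant case, and the same identity $E_p(\sigma(t),t)=1+\mu(t)p(t)$ holds when $p$ is real because $\exp(\Ln(1+\mu(t)p(t)))=1+\mu(t)p(t)$. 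In either case the prefactor collapses to $p(t)$, giving $E_p^{\Delta}(t,s)=p(t)E_p(t,s)$.

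The remaining, and genuinely delicate, case is a right-dense point $t$, where $\mu(t)=0$ and the delta derivative is the ordinary two-sided limit of $(t-\tau)^{-1}\bigl(E_p(t,s)-E_p(\tau,s)\bigr)$ as $\tau\to t$. Writing $h(\tau):=\int_\tau^t\xi_{\mu(u)}(p(u))\Delta u$ so that $E_p(t,\tau)=\exp(h(\tau))$, and using the semigroup property to factor $E_p(t,s)-E_p(\tau,s)=(E_p(t,\tau)-1)E_p(\tau,s)$, I would split the difference quotient as
\begin{equation*}
 \frac{E_p(t,s)-E_p(\tau,s)}{t-\tau}=\frac{\exp(h(\tau))-1}{h(\tau)}\cdot\frac{h(\tau)}{t-\tau}\,E_p(\tau,s).
\end{equation*}
As $\tau\to t$ we have $h(\tau)\to 0$, so the first factor tends to $1$ and $E_p(\tau,s)\to E_p(t,s)$ by continuity, while $\tfrac{h(\tau)}{t-\tau}$ is the integral mean of $\xi_{\mu(\cdot)}(p(\cdot))$ over $[\tau,t]$. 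This is exactly where Lemma \ref{continuous of xi} is indispensable: it establishes $\lim_{u\to t}\xi_{\mu(u)}(p(u))=p(t)$ at right-dense points, so the integrand is continuous there with value $p(t)$ and the mean converges to $p(t)$. Multiplying the limits yields $E_p^{\Delta}(t,s)=p(t)E_p(t,s)$, completing the identity on all of $\T^{\kappa}$.

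I expect the main obstacle to be precisely this right-dense analysis. On a time scale the naive chain rule $(\exp g)^{\Delta}=(\exp g)\,g^{\Delta}$ fails in general, which forces the semigroup-plus-averaging route above rather than a direct differentiation of $\exp(\int\cdots)$; and the convergence of the integral mean hinges on the continuity of the cylinder-transformed integrand, a nontrivial fact because the graininess $\mu(u)$ varies and $\xi_{\mu(u)}$ is built from the quaternion logarithm. Once Lemma \ref{continuous of xi} is in hand this obstacle dissolves, and the two cases together with uniqueness of the solution to (\ref{1D IVP}) deliver $\psi_p(t,s)=E_p(t,s)$.
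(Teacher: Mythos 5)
Your proposal is correct and follows essentially the same route as the paper: both verify that $E_p(t,s)$ solves the initial value problem and invoke uniqueness, handle right-scattered points via the semigroup property together with Lemma \ref{right-scattered exponential}, and handle right-dense points by combining Lemma \ref{continuous of xi} with the limit $(\exp(q)-1)q^{-1}\to 1$ in the commutative subfield $\mathbb{C}_{\bm\varsigma}$. The only difference is presentational — you package the right-dense case as a product of limits (which tacitly needs $h(\tau)\neq 0$, true here since the $\bm\varsigma$-component of $\xi_{\mu(u)}(\alpha)$ is strictly positive), whereas the paper writes out the equivalent $\varepsilon$–$\delta$ estimate and simply cites Bohner–Peterson for the real-valued case.
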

\begin{proof}
	For  real-valued $p\in \rc(\T,\H)$, please refer to Theorem 2.33 in \cite{bohner2001dynamic}. For  $p(t)=\alpha\in \H\setminus \mathbb{R}$, observe first that $E_{\alpha}(s,s)=1$.  We only need to show that $E_{\alpha}(t,s)$ satisfies (\ref{regressive equation}). Let $t\in\T^{\kappa}$ be right-scattered, that is, $\mu(t)=\sigma(t)-t>0$. By  applying Lemma \ref{lemma of commute pro of exponential} and  \ref{right-scattered exponential},
	\begin{equation*}
		\begin{split}
			E^{\Delta}_{\alpha}(t,s) & =\frac{E_{\alpha}(\sigma(t),s)-E_{\alpha}(t,s)}{\mu(t)} \\
			& =\frac{E_{\alpha}(\sigma(t),t)-1}{\mu(t)} E_{\alpha}(t,s) \\
			&=\alpha  E_{\alpha}(t,s).
		\end{split}
	\end{equation*}
	Let $t\in\T^{\kappa}$ be right-dense. For any given positive number $\varepsilon <\abs{E_{\alpha}(t,s)}$. Lemma  \ref{continuous of xi} implies that  there exists    $\delta_1>0$ such that
	\begin{equation*}
		\abs{\xi_{\mu(\tau)}(\alpha)-\alpha}<\frac{\varepsilon}{2 \abs{E_{\alpha}(t,s)}}<1
	\end{equation*}
	for all $\tau\in U_{\delta_1}$. Then
	$\abs{\int_{t'}^t\xi_{\mu(\tau)}(\alpha)\Delta\tau}\leq(1+\abs{\alpha})\abs{t-t'}$
	and
	\begin{equation*}
		\abs{E_{\alpha}(t,s)}\cdot \abs{\int_{t'}^t \left(\xi_{\mu(\tau)}(\alpha)-\alpha\right)\Delta\tau}\leq\frac{\varepsilon}{2} \abs{t-t'}
	\end{equation*}
	for all $t'\in  U_{\delta_1}$. Observe that
	$ \lim_{{q\to 0}\atop {q\in  \mathbb{C}_{\bm \zeta}}} \left(1-q-\exp(-q)\right)q^{-1}=0 $ where
	${\bm \zeta}=\frac{\Im(\alpha)}{\abs{\Im(\alpha)}}$. Then there exists  $\delta_1>\delta_2>0$ such that
	\begin{equation*}
		\abs{1-\int_{t'}^t\xi_{\mu(\tau)}(\alpha)\Delta\tau-E_{\alpha}(t',t)}\leq \frac{\varepsilon}{2(1+\abs{\alpha})\abs{E_{\alpha}(t,s)}}\cdot \abs{\int_{t'}^t\xi_{\mu(\tau)}(\alpha)\Delta\tau}
	\end{equation*}
	for all $t'\in  U_{\delta_2}$. Therefore, by Lemma \ref{lemma of commute pro of exponential}
	\begin{equation*}
		\begin{split}
			&  \abs{E_{\alpha}(t,s)-E_{\alpha}(t',s)-\alpha E_{\alpha}(t,s) (t-t') } \\
			=  & \abs{1-E_{\alpha}(t',t)-\alpha (t-t')}\cdot\abs{E_{\alpha}(t,s)}\\
			\leq & \abs{E_{\alpha}(t,s)}  \cdot \abs{\int_{t'}^t \left(\xi_{\mu(\tau)}(\alpha)-\alpha\right)\Delta\tau}\\
			&+ \abs{E_{\alpha}(t,s)}  \cdot\abs{1-\int_{t'}^t\xi_{\mu(\tau)}(\alpha)\Delta\tau-E_{\alpha}(t',t)}\\
			\leq &\frac{\varepsilon}{2} \abs{t-t'}+\frac{\varepsilon}{2} \abs{t-t'}\\
			=& \varepsilon \abs{t-t'}
		\end{split}
	\end{equation*}
	for all $t'\in  U_{\delta_2}$. This implies that $ E^{\Delta}_{\alpha}(t,s)= \alpha  E_{\alpha}(t,s)$. The proof is complete.
\end{proof}
\begin{proposition}\label{state transition 1D}
	Let $p \in \rc(\T,\H)$, then the following assertions hold.
	\begin{enumerate}[(i)]
		\item If $ \phi(t)$ satisfies (\ref{regressive equation}) and $\phi(t_0)=0$ for some $t_0\in\T$, then $\phi(t)=0$ for all $t\in\T$. In other words, if $\phi(t_0)\neq0$ for some $t_0\in\T$, then $\phi(t)\neq0$ for all $t\in\T$.
		\item If $ \phi(t)$ is a nonzero solution of (\ref{regressive equation}), then for any $t,r,s\in\T$, $\psi_p(t,s)=\phi(t)\phi^{-1}(s)=\psi_p(t,r)\psi_p(r,s)\neq 0$ and $\psi_p^{-1}(t,s)=\psi_p(s,t)$.
		\item For any given IVP $y(t_0)=c_0$, the solution is $\psi_p(t,t_0)c_0$.
		\item  $\psi_0(t,s)=1$ and  $\psi_p(t,t)=1$.
	\end{enumerate}
\end{proposition}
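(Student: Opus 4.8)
The plan is to derive all four assertions from two ingredients: the global existence–uniqueness of solutions of the regressive IVP (asserted in the Remark after (\ref{1D IVP})), and the constant-factor product rule Theorem \ref{delta derivative for q-functions}(3), which gives $(f\beta)^{\Delta}(t)=f^{\Delta}(t)\beta$ for every fixed $\beta\in\H$. I would prove them in the stated order, since (2) uses (1) and (3) uses the normalization $\psi_p(t_0,t_0)=1$. For (1), note that both $\phi$ and the zero function solve the IVP $y^{\Delta}=py$, $y(t_0)=0$; since $p\in\rc(\T,\H)$ is regressive, $1+\mu(t)p(t)\neq 0$ and this IVP has exactly one solution on all of $\T$. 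The regressivity is what makes uniqueness bidirectional: at a right-scattered $t$, Theorem \ref{delta derivative for q-functions}(1) yields $\phi(\sigma(t))=(1+\mu(t)p(t))\phi(t)$, and invertibility of $1+\mu(t)p(t)$ lets one recover $\phi(t)$ from $\phi(\sigma(t))$, so the solution is determined backward as well as forward. Uniqueness then forces $\phi\equiv 0$, and the contrapositive is the second wording of (1).

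For (2), (1) guarantees that a nonzero solution $\phi$ satisfies $\phi(t)\neq 0$ for every $t$, so $\phi^{-1}(s)$ exists everywhere. Fixing $s$ and putting $g(t):=\phi(t)\phi^{-1}(s)$, the product rule gives $g^{\Delta}(t)=\phi^{\Delta}(t)\phi^{-1}(s)=p(t)\phi(t)\phi^{-1}(s)=p(t)g(t)$ while $g(s)=1$; by the uniqueness from (1), $g=\psi_p(\cdot,s)$, i.e. $\psi_p(t,s)=\phi(t)\phi^{-1}(s)$. The identity $\psi_p(t,r)\psi_p(r,s)=\psi_p(t,s)$ and nonvanishing then follow by cancelling the interior factor $\phi^{-1}(r)\phi(r)=1$, and from $\psi_p(t,s)\psi_p(s,t)=1$ we read off $\psi_p(s,t)=\psi_p^{-1}(t,s)$ (a two-sided inverse, since $\H$ is a division ring).

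For (3), set $y(t):=\psi_p(t,t_0)c_0$ and check it solves the IVP: right multiplication by the constant $c_0$ commutes with $\Delta$ by Theorem \ref{delta derivative for q-functions}(3), so $y^{\Delta}(t)=\psi_p^{\Delta}(t,t_0)c_0=p(t)\psi_p(t,t_0)c_0=p(t)y(t)$, and $y(t_0)=\psi_p(t_0,t_0)c_0=c_0$; uniqueness concludes. It is essential that $c_0$ appears on the right, reflecting the right-module structure. Statement (4) is then immediate: $\psi_p(t,t)=1$ is the normalization in the definition of $\psi_p$, and for $p\equiv 0$ (which lies in $\rc(\T,\H)$ because $1+\mu\cdot 0=1$) the equation becomes $y^{\Delta}=0$, whose unique solution taking value $1$ at $s$ is the constant $1$, so $\psi_0(t,s)=1$.

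The one genuine obstacle is the global uniqueness invoked in (1): one must confirm that regressivity delivers uniqueness in both time directions, because $\T$ may be totally disconnected (e.g. $\T=\mathbb Z$), so no connectedness or continuation argument across dense gaps is available — the backward propagation through the invertible factor $1+\mu(t)p(t)$ at scattered points is precisely the mechanism that rescues this. Everything after that is bookkeeping with the product rule, the main care being to keep every multiplication by a constant on the right-hand side.
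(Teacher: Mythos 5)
Your proof is correct, and assertions 2--4 are handled exactly as the paper intends (``the rest of assertions follows from first assertion''), but for the key assertion 1 you take a genuinely different route. You observe that $\phi$ and the zero function solve the same regressive IVP at $t_0$ and invoke the global, two-sided uniqueness theorem stated just after (\ref{1D IVP}); your remark that regressivity is what lets the solution be recovered backwards across right-scattered points via $\phi(t)=(1+\mu(t)p(t))^{-1}\phi(\sigma(t))$ identifies the correct mechanism. The paper instead avoids resting the whole claim on the quaternionic uniqueness statement (which it only asserts ``by similar arguments to Theorem 5.8'' of Bohner--Peterson) and reduces to the classical real scalar theory: writing $w(t)=\abs{\phi(t)}^2=\overline{\cx_1(\phi)}\cx_1(\phi)+\overline{\cx_2(\phi)}\cx_2(\phi)$ and applying the product rule of Theorem \ref{delta derivative for q-functions}, it shows that $w$ satisfies the real-valued regressive scalar equation $w^{\Delta}=2\left(\Re(p)+\mu\abs{p}^2\right)w$, so that $w(t)=E_u(t,t_0)w(t_0)$ and the nonvanishing of the real generalized exponential forces $w\equiv 0$ once $w(t_0)=0$. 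The paper's reduction buys a quantitative identity for $\abs{\phi}^2$ (a scalar precursor of the Liouville formula of Section \ref{S4}) and needs only the thoroughly established real-valued theory; your argument is shorter and cleaner but places the full weight on the two-sided quaternionic uniqueness theorem, which the paper asserts without detailed proof --- granting that theorem, as the paper itself does, your proof is complete. One small wording point: when you identify $g(t)=\phi(t)\phi^{-1}(s)$ with $\psi_p(\cdot,s)$, the uniqueness you invoke is that of the existence--uniqueness theorem, not of assertion 1; assertion 1 is what you need only to guarantee $\phi(s)\neq 0$ so that $g$ is defined.
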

\begin{proof}
	In the classical case,  for any  $y=\phi(t)$ satisfying  (\ref{regressive equation}) never vanishes, because it is just a   composition of exponential  and  $\xi_{\mu(\cdot)}(p(\cdot))$. But Example \ref{not exp} indicates that  $\psi_p(t,s)$ is not necessarily to be $E_p(t,s)$ in the quaternion case. So assertion 1 is not a trivial result. Let $w(t)=\abs{\phi(t)}^2= \overline{\cx_1(\phi(t))}\cx_1(\phi(t)) + \overline{\cx_2(\phi(t))}\cx_2(\phi(t))$. Then by Theorem \ref{delta derivative for q-functions},
	\begin{equation*}
		\begin{split}
			w^{\Delta}  &=\overline{\cx_1(\phi)} \cx_1(\phi)^{\Delta}+\overline{\cx_1(\phi)} ^{\Delta} \cx_1(\phi)^{\sigma}+\overline{\cx_2(\phi)} \cx_2(\phi)^{\Delta}+\overline{\cx_2(\phi)} ^{\Delta} \cx_2(\phi)^{\sigma} \\
			&=  \overline{\cx_1(\phi)}\left(\cx_1(p)\cx_1(\phi)- \cx_2(p)\overline{\cx_2(\phi)} \right)+\left(\overline{\cx_1(p) \cx_1(\phi)}-\overline{\cx_2(p)}\cx_2(\phi)\right) \left( \cx_1(\phi)+\mu \cx_1(\phi)^{\Delta}\right)  \\
			&~~+ \overline{\cx_2(\phi)}\left(\cx_1(p)\cx_2(\phi)+ \cx_2(p)\overline{\cx_1(\phi)} \right)+\left(\overline{\cx_1(p) \cx_2(\phi)}+\overline{\cx_2(p)}\cx_1(\phi)\right) \left( \cx_2(\phi)+\mu \cx_2(\phi)^{\Delta}\right)   \\
			&=      2 \Re(p)\left(\abs{\cx_1(\phi)}^2+\abs{\cx_2(\phi)}^2\right) \\
			&~~~ +\mu\left(\overline{\cx_1(p) \cx_1(\phi)}-\overline{\cx_2(p)}\cx_2(\phi)\right) \left(\cx_1(p)\cx_1(\phi)- \cx_2(p)\overline{\cx_2(\phi)} \right) \\
			&~~~+ \left(\overline{\cx_1(p) \cx_2(\phi)}+\overline{\cx_2(p)}\cx_1(\phi)\right)\left(\cx_1(p)\cx_2(\phi)+ \cx_2(p)\overline{\cx_1(\phi)} \right)\\
			&= 2\left(\Re(p)+\mu \abs{p}^2\right)w.
		\end{split}
	\end{equation*}
	Therefore $w(t_0)=0$ implies that $w(t)= 0$ for all $t\in\T$ by Theorem \ref{exp of q-constant}. This completes the proof of assertion 1. The rest of assertions follows from  first assertion.
\end{proof}

\begin{example}
	Let $\T=2^{N_0}$ and $\alpha\in\H\setminus\mathbb{R}$ be a quaternion constant. Then $\mu(2^m)=2^{m+1}-2^m=2^m$ and therefore
	\begin{equation*}
		\xi_{\mu(2^m)}(\alpha) =2^{-m}\Ln(1+2^m \alpha).
	\end{equation*}
	It follows that
	\begin{equation*}
		\int_1^{2^k}\xi_{\mu(\tau)}(\alpha)\Delta \tau=\sum_{m=0}^{k-1}\mu(2^m)\xi_{\mu(2^m)}(\alpha)=\sum_{m=0}^{k-1}\Ln(1+2^m \alpha).
	\end{equation*}
	Thus
	\begin{equation*}
		E_{\alpha}(2^k,1)=\prod_{m=0}^{k-1}(1+2^m \alpha),
	\end{equation*}
\end{example}

\begin{example}\label{exponential in hZ}
	Let $\T=h\mathbb{Z}$ with $h>0$ and $\alpha\in\H\setminus\mathbb{R}$ be a quaternion constant. Then
	\begin{equation*}
		\begin{split}
			E_\alpha(t,0)= &  \abs{1+\alpha h}^{\frac{t}{h}}\left(\cos\left( \frac{t}{h}\arccos \frac{1+\alpha_0 h}{\abs{1+\alpha h}}\right)+  { \frac{\mathfrak{I}(\alpha)}{\abs{\mathfrak{I}(\alpha)}}}\sin \left( \frac{t}{h}\arccos \frac{1+\alpha_0 h}{\abs{1+\alpha h}}\right) \right) \\
			=  & \left(1+\alpha h \right)^{\frac{t}{h}}.
		\end{split}
	\end{equation*}
\end{example}

\section{ Linear systems of  QDETS}\label{S4}

Let $A(t)=(a_{ij}(t))_{m\times n}$ be  an  $m\times n$ quaternion-matrix-valued function   with $a_{ij}(t)\in \H\otimes \T$. We denote the set of such quaternion-matrix-valued functions by  $\H^{m\times n}\otimes \T$. We say  $A(t)$ is  rd-continuous (delta differentiable) on $\T$  if all of $a_{ij}(t)~(1\leq i\leq m, 1\leq j\leq n)$ is rd-continuous (delta differentiable) on $\T$. If $A(t)$ is delta differentiable on $\T$, we put $A^{\Delta}(t):=(a_{ij}^{\Delta}(t))_{n\times n}$.

In order to state our results, we introduce some notations which  are analogous to  those used in \cite{horn1985matrix,cormani2003liouville}. Let $\Omega_k^{(m)}$ be the set consisting of all possible $k$-combinations of the set $\{1,2,\cdot\cdot\cdot,m\}$. The number of elements of $\Omega_k^{(m)}$ is $ C_m^k$.  Let $\Lambda_k=\{i_1,i_2,\cdot\cdot\cdot,i_k \}\in \Omega_k^{(m)}$ be an index  set. For any $B\in \C^{m\times m}\otimes \T$, $B(\Lambda_k)$ denotes the principal sub-matrix that lies in the rows and columns of $B$ indexed by $\Lambda_k$ and
\begin{equation}\label{coefficient of chara polyn of chi-A}
\mathcal{V}_k(B):=\sum_{\Lambda_k\in \Omega_k^{(m)}}\det B(\Lambda_k)
\end{equation}
denotes the sum of determinants of  all $B(\Lambda_k), \Lambda_k\in \Omega_k^{(m)}$. Similarly, let $B(\Lambda_k,\Delta)$ be the $m\times m$ matrix generated from $B$  by replacing original entries with delta derivatives  on the rows indexed by $\Lambda_k$.

We consider the linear nonhomogeneous quaternion dynamic equations
\begin{equation}\label{nonhomegeneous eqn}
\bphi^{\Delta}(t){=}A(t)\bphi(t){+}\bfunc(t)
\end{equation}
and the linear homogeneous  quaternion dynamic equations
\begin{equation}\label{homegeneous eqn}
\bphi^{\Delta}(t)=A(t)\bphi(t)
\end{equation}
where $\bphi(t), \bfunc(t)\in\H^{n\times 1}\otimes \T$ and $A(t)\in \H^{n\times n}\otimes \T$. Let $t_0\in\T$ and $\bmeta\in\H^n$,   then the corresponding initial value problem is
\begin{equation}\label{IVP}
\bphi(t_0)=\bmeta.
\end{equation}

We are in a position to introduce the concept of  regressivity of $A(t)$.
\begin{definition}
	We say   $A(t)\in \H^{n\times n}\otimes \T$ is regressive  provided that
	\begin{equation}
	I_n+\mu(t)A(t) ~~~  \text{is invertible for all}~~~t\in\T^{\kappa}.
	\end{equation}
	The totality
	of  all such regressive and rd-continuous quaternion-valued functions is denoted by $\mathcal{R}(\T,\H^{n\times n})$. If $A\in \mathcal{R}(\T,\H^{n\times n})$, we say system (\ref{nonhomegeneous eqn}) is regressive.
\end{definition}

\begin{lemma}\label{equivalent conditions of  regressivity 1}
	Let  $A(t)\in \H^{n\times n}\otimes \T$. Then for any fixed $t\in\T$, $I_n+\mu(t)A(t)$ is invertible if and  only if $I_{2n}+\mu(t)\chi_A(t)$ is invertible.
\end{lemma}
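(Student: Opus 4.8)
We must show that for fixed $t\in\T$, the quaternion matrix $I_n+\mu(t)A(t)$ is invertible if and only if the complex matrix $I_{2n}+\mu(t)\chi_A(t)$ is invertible.
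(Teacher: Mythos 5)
Your proposal contains only a restatement of the lemma; no argument is actually given, so the entire proof is missing. The two ideas you need are: (i) the complex adjoint map $\mathfrak{G}$ is additive, sends $I_n$ to $I_{2n}$, and commutes with multiplication by the real scalar $\mu(t)$, so that writing $\widetilde{A}(t)=I_n+\mu(t)A(t)$ one gets $\chi_{\widetilde{A}}(t)=I_{2n}+\mu(t)\chi_A(t)$; and (ii) a quaternion matrix is invertible if and only if its complex adjoint matrix is invertible, which is statement 2 of Theorem \ref{thm of q matrix}. Combining these two facts gives the equivalence immediately, and this is exactly the paper's one-line proof. As written, your submission establishes nothing beyond announcing the goal, so you must supply both steps (and in particular cite or prove the equivalence between invertibility of $A$ and of $\chi_A$, which is where the real content lies).
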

\begin{proof}
	Let $\widetilde{A}(t)=I_n+\mu(t)A(t)$, it is easy to see that $\chi_{\widetilde{A}}(t)=I_{2n}+\mu(t)\chi_{A}(t)$. Then by statement 3 of Theorem
	\ref{thm of q matrix}, we complete the proof.
\end{proof}

We need a lemma about the  equivalent conditions of  regressivity   in the classical case.  This result  can be found in \cite{bohner2001dynamic,cormani2003liouville}.

\begin{lemma}\label{equivalent conditions of  regressivity 2}
	Let $B(t)\in \C^{m\times m}\otimes \T$, then for any fixed $t\in\T$, the following statements are equivalent.
	\begin{enumerate}[(i)]
		\item  $I_m+\mu(t)B(t)$ is invertible.
		\item   The eigenvalues $\lambda_i(t)$ of $B(t)$ is regressive for all $1\leq i \leq m$. Namely, $1+\mu(t)\lambda_i(t)\neq 0$ for all $1\leq i \leq m$.
		\item  $u(t)$ is regressive, namely, $1+\mu(t)u(t)\neq 0$, where
		$u(t)=\sum_{k=1}^{2n}\mu^{k-1}(t)\mathcal{V}_k(B(t))$ and $\mathcal{V}_k$ is defined by (\ref{coefficient of chara polyn of chi-A}).
	\end{enumerate}
\end{lemma}

An immediate consequence of  Lemma \ref{equivalent conditions of  regressivity 1} and
\ref{equivalent conditions of  regressivity 2}    is the   equivalent conditions of  regressivity  for $A(t)\in \H^{n\times n}\otimes \T$.

\begin{theorem}\label{equivalent conditions of  regressivity 3}
	Let  $A(t)\in \H^{n\times n}\otimes \T$. Then for any fixed $t\in\T$, $A(t)$ is  regressive if and  only if all of its eigenvalues  are  regressive.
\end{theorem}
\begin{proof}
	By Lemma \ref{equivalent conditions of  regressivity 1} and
	\ref{equivalent conditions of  regressivity 2},  we know that   $A(t)\in \H^{n\times n}\otimes \T$ is regressive if and only if all the standard eigenvalues  of $A(t)$ are regressive.  Note that if $\lambda\in\H$ is  regressive, then
	$\alpha^{-1} \lambda \alpha$ is also regressive for any nonzero $\alpha\in \H$. Then by statement 2 of Theorem \ref{thm of q matrix}, we complete the proof.
\end{proof}

By similar arguments to Theorem 5.8 in \cite{bohner2001dynamic}, we have the following existence and uniqueness theorem.

\begin{theorem}
	If $A\in \mathcal{R}(\T,\H^{n\times n})$ and $\bfunc$ is rd-continuous. Then the initial value problem (\ref{nonhomegeneous eqn}), (\ref{IVP}) has exactly a unique solution.
\end{theorem}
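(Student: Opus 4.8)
The plan is to reduce the quaternion initial value problem (\ref{nonhomegeneous eqn})--(\ref{IVP}) to a genuinely complex linear dynamic system on $\T$, solve that by the classical theory, and transport the solution back. Write $\bphi(t)=\cx_1(\bphi(t))+\cx_2(\bphi(t))\j$ with $\cx_1(\bphi),\cx_2(\bphi)\in\C^{n\times1}\otimes\T$, and associate to $\bphi$ the single complex column $v(t):=\bigl(\cx_1(\bphi(t)),\,-\overline{\cx_2(\bphi(t))}\bigr)^{\mathsf T}\in\C^{2n\times1}\otimes\T$, i.e. the first column of $\chi_{\bphi}$. The map $\bphi\mapsto v$ is an $\mathbb{R}$-linear bijection of $\H^{n}\otimes\T$ onto $\C^{2n}\otimes\T$, so it loses no information and requires no additional compatibility or ``descent'' condition.

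First I would check that this correspondence turns (\ref{nonhomegeneous eqn})--(\ref{IVP}) into the complex problem $v^{\Delta}=\chi_A v+w$, $v(t_0)=v_0$, where $w$ and $v_0$ are built from $\bfunc$ and $\bmeta$ exactly as $v$ is built from $\bphi$. This rests on three routine facts: delta differentiation acts componentwise, so it commutes with $\cx_1,\cx_2$ and with complex conjugation (remarked after the definition of $f^{\Delta}$); the adjoint is multiplicative, $\chi_{A\bphi}=\chi_A\chi_{\bphi}$, so that reading off first columns produces the term $\chi_A v$; and rd-continuity of $\bfunc$ forces rd-continuity of $w$.

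The load-bearing input is regressivity: by Lemma \ref{equivalent conditions of  regressivity 1}, $A\in\mathcal{R}(\T,\H^{n\times n})$ is equivalent to $I_{2n}+\mu(t)\chi_A(t)$ being invertible for every $t\in\T^{\kappa}$, i.e. $\chi_A\in\mathcal{R}(\T,\C^{2n\times 2n})$. Hence the reduced system is a regressive, rd-continuous complex linear dynamic equation, and the standard existence and uniqueness theorem for such systems (the matrix analogue of Theorem 5.8 in \cite{bohner2001dynamic}, valid since $\C$ is a field) yields a unique $v$. Pulling $v$ back through the bijection $\bphi\mapsto v$ produces a unique solution $\bphi$ of the original problem.

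I expect the only delicate point to be confirming that the reduction is faithful and that the equation transforms correctly -- in particular that using just the first column of $\chi_{\bphi}$ (rather than the full $2n\times2$ adjoint) already captures the whole quaternion unknown, which is what removes any structure-preservation obstacle. If one instead prefers to imitate the proof of Theorem 5.8 in \cite{bohner2001dynamic} directly, the work shifts to two places: solving the forward/backward recursion $\bphi(\sigma(t))=(I_n+\mu(t)A(t))\bphi(t)+\mu(t)\bfunc(t)$ at right-scattered $t$, which is uniquely solvable precisely because $I_n+\mu(t)A(t)$ is invertible, and running a contraction-mapping (Picard) argument on rd-continuous functions over the right-dense portions, then gluing by the induction principle on time scales; the contraction step would be the main obstacle there, but non-commutativity of $\H$ causes no trouble since the equation is linear.
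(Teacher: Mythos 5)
Your argument is correct, but it is not the route the paper takes: the paper offers no written proof at all, merely asserting that the result follows ``by similar arguments to Theorem 5.8 in \cite{bohner2001dynamic}'', i.e.\ by redoing the classical existence--uniqueness proof (induction principle on the time scale, inversion of $I_n+\mu(t)A(t)$ at right-scattered points, Picard iteration on right-dense portions) directly for quaternion-valued functions --- exactly the second route you sketch at the end. Your primary route instead transports the problem to $\C^{2n}$ via the first column of the complex adjoint, and it is sound: the map $\bphi\mapsto v=\bigl(\cx_1(\bphi),-\overline{\cx_2(\bphi)}\bigr)^{\top}$ is a real-linear bijection onto all of $\C^{2n}$ with no descent condition, $\chi_{A\bphi}=\chi_A\chi_{\bphi}$ gives the correct transformed equation, delta differentiation and rd-continuity pass through componentwise, and the paper's own lemma equating invertibility of $I_n+\mu(t)A(t)$ with that of $I_{2n}+\mu(t)\chi_A(t)$ supplies regressivity of the complex system. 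What your reduction buys is that nothing from the classical proof needs to be re-examined in the non-commutative setting --- uniqueness and existence both transfer through the bijection --- at the cost of leaning on the adjoint formalism; the paper's (implicit) direct imitation is more self-contained but obliges one to check that the contraction/induction machinery survives quaternion coefficients, which, as you correctly note, it does because the equation is linear and the quaternion norm is multiplicative. Either argument would serve as a complete proof; yours is arguably the more economical given that the adjoint machinery is already set up in Section 4.
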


To study the properties of solutions of (\ref{homegeneous eqn}), we should define the concept of \emph{Wronskian} for quaternion dynamic equations. Due to the noncommutative property of quaternons,  there is no unified definition of determinant of quaternion matrix. Many researchers have proposed different definitions. But, as mentioned in \cite{kou2015linear1}, some definitions of determinant  may be  not suitable to  define   {Wronskian}. Kou  \emph{et al.}  \cite{kou2015linear2}   adopted the definition of determinant based on permutation proposed  by Chen \cite{longxuan1991definition}. The proof of Liouville's formula in \cite{kou2015linear2} is complicated.
In this paper, we adopt an  alternative definition of determinant called \emph{q-determinant} \cite{zhang1997quaternions} for $A\in \H^{n\times n}$.  It is defined by
\begin{equation}\label{q determinant}
\qd A:=\det  \chi_A.
\end{equation}

We know that the product rule  for delta derivative is more tedious than  traditional one. But the Wronskian defined in   \cite{kou2015linear2}  contains many product operations. So we use $q$-determinant to define the Wronskian for quaternion dynamic equations on time scales.

\begin{definition}
	Let $\bx_1(t), \bx_2(t), \cdot\cdot\cdot,\bx_n(t)$, $\bx_i(t)\in \H^n \otimes \T$ be $n$ quaternion-vector-valued functions. Denote
	\begin{equation*}
		M(t):=(\bx_1(t),\bx_2(t), \cdot\cdot\cdot,\bx_n(t))= \begin{pmatrix}
			x_{11}(t)&x_{12}(t)&\cdot\cdot\cdot& x_{1n}(t)\\
			x_{21}(t)&x_{22}(t)&\cdot\cdot\cdot& x_{2n}(t)\\
			~&{\cdot\cdot\cdot}&~&~\\
			x_{n1}(t)&x_{n2}(t)&\cdot\cdot\cdot& x_{nn}(t)
		\end{pmatrix}.
	\end{equation*}
	The Wronskian of $M(t)$   is defined by
	\begin{equation*}
		W(t):=\qd M(t)=\det  \chi_M(t).
	\end{equation*}
	If  $\bx_1(t), \bx_2(t), \cdot\cdot\cdot,\bx_n(t)$ are  solutions  of
	(\ref{homegeneous eqn}), then  we  call $M(t)$  a solution matrix of (\ref{homegeneous eqn}).
\end{definition}
\begin{theorem}\label{superposition theorem}
	If $\bx_1(t), \bx_2(t) \in \H^n \otimes \T$ are solutions of   (\ref{homegeneous eqn}), then  the    right  linear combination  $\bx_1(t)\alpha_1+\bx_2(t)\alpha_2$ for any $\alpha_1,\alpha_2\in \H$ is also a solution of (\ref{homegeneous eqn}).
\end{theorem}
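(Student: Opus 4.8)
The plan is to verify directly that $\bx_1(t)\alpha_1 + \bx_2(t)\alpha_2$ satisfies (\ref{homegeneous eqn}) by computing its delta derivative, relying entirely on the linearity properties of the delta derivative recorded in Theorem \ref{delta derivative for q-functions}. The essential structural feature to exploit is that the scalar coefficients $\alpha_1,\alpha_2$ appear on the \emph{right}, whereas the coefficient matrix $A(t)$ acts on the \emph{left}; because of this arrangement, right multiplication by constants commutes with the left action of $A(t)$ in spite of the non-commutativity of $\H$.

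First I would apply statement 3 of Theorem \ref{delta derivative for q-functions} componentwise to each product $\bx_i(t)\alpha_i$. Although that theorem is stated for scalar functions in $\H\otimes\T$, it extends at once to vector-valued functions since the delta derivative acts entrywise. Taking $\alpha=1$ and $\beta=\alpha_i$ in the rule $(\alpha f\beta)^{\Delta}=\alpha f^{\Delta}\beta$ yields $(\bx_i\alpha_i)^{\Delta}(t)=\bx_i^{\Delta}(t)\alpha_i$. Statement 2 (additivity of the delta derivative) then gives
\begin{equation*}
  (\bx_1\alpha_1+\bx_2\alpha_2)^{\Delta}(t)=\bx_1^{\Delta}(t)\alpha_1+\bx_2^{\Delta}(t)\alpha_2.
\end{equation*}
Since $\bx_1$ and $\bx_2$ solve (\ref{homegeneous eqn}), I would substitute $\bx_i^{\Delta}(t)=A(t)\bx_i(t)$ to obtain $A(t)\bx_1(t)\alpha_1+A(t)\bx_2(t)\alpha_2$, and then factor $A(t)$ out on the left, which is legitimate because matrix--vector multiplication distributes over addition and the scalars $\alpha_i$ already sit to the right of the vectors. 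This produces
\begin{equation*}
  (\bx_1\alpha_1+\bx_2\alpha_2)^{\Delta}(t)=A(t)\bigl(\bx_1(t)\alpha_1+\bx_2(t)\alpha_2\bigr),
\end{equation*}
which is exactly (\ref{homegeneous eqn}) evaluated at the candidate function, completing the verification.

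There is no genuine obstacle in the computation itself; the one point demanding care is the placement of the scalars. If the coefficients multiplied from the left, the step $A(\alpha_i\bx_i)=\alpha_i A\bx_i$ would fail in general, so the result is intrinsically a statement about the \emph{right} $\H$-module structure on $\H^n$ rather than a two-sided one. This is precisely why the solution set of (\ref{homegeneous eqn}) forms a right quaternion module, in accordance with the remark in the introduction, and the proof is essentially an instance of that module structure made explicit.
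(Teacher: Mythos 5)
Your proof is correct; the paper states this theorem without proof, treating it as immediate, and your direct verification via statements 2 and 3 of Theorem \ref{delta derivative for q-functions} (additivity and $(\alpha f\beta)^{\Delta}=\alpha f^{\Delta}\beta$ with $\alpha=1$, $\beta=\alpha_i$) is exactly the intended argument. Your remark that the right placement of the scalars is what makes the factorization of $A(t)$ on the left legitimate correctly identifies the only point of substance.
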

\begin{theorem}\label{dependent solutions}
	If  $\bx_1(t), \bx_2(t), \cdot\cdot\cdot,\bx_n(t)$, $\bx_i(t)\in \H^n \otimes \T$ are right linearly dependent on $\T$, then $W(t)= 0$ for all $t\in\T$. In other words, if  $W(t)\neq 0$ for some $t_0\in \T$, then $x_1(t),x_2(t),\cdot\cdot\cdot, x_n(t)$ are right linearly independent on $\T$.
\end{theorem}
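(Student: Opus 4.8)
The plan is to reduce the statement to the equivalence between invertibility of a quaternion matrix and invertibility of its complex adjoint, which is precisely the property that makes the $p$-determinant a workable Wronskian.

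First I would translate right linear dependence into matrix form. By the meaning of right linear dependence on $\T$, there is a \emph{fixed} vector $\bm{\alpha}=(\alpha_1,\ldots,\alpha_n)^{\top}\in\H^n$, not the zero vector, such that $\bx_1(t)\alpha_1+\bx_2(t)\alpha_2+\cdots+\bx_n(t)\alpha_n=0$ for every $t\in\T$. Since the columns of $M(t)$ are precisely the $\bx_i(t)$, and the scalars act on the right as required by the right $\H$-module structure, this is exactly $M(t)\bm{\alpha}=0$ for all $t\in\T$.

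Next I would argue that each $M(t)$ fails to be invertible. If $M(t)$ were invertible for some $t$, then left-multiplying $M(t)\bm{\alpha}=0$ by $M(t)^{-1}$ would force $\bm{\alpha}=0$, contradicting the choice of $\bm{\alpha}$; over the division ring $\H$ a two-sided inverse annihilates any right null vector, so no subtlety about one-sided inverses intervenes. Hence $M(t)$ is singular for every $t\in\T$. Finally I would invoke statement 2 of Theorem \ref{thm of q matrix}: $M(t)$ is invertible if and only if $\chi_{M(t)}$ is invertible. Since $M(t)$ is singular, the complex matrix $\chi_{M(t)}$ is singular too, so its ordinary determinant vanishes. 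By the definition \eqref{q determinant} of the $p$-determinant, $W(t)=\qd M(t)=\det\chi_{M(t)}=0$ for all $t\in\T$, which is the assertion; the contrapositive ``in other words'' statement follows immediately.

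The argument is essentially routine, and that is itself the point worth flagging: the only genuine input is Theorem \ref{thm of q matrix}(2), which guarantees that $\qd M(t)=0$ is truly equivalent to the noncommutative singularity of $M(t)$. This equivalence is exactly what the $p$-determinant buys us over the permutation-based determinant of \cite{kou2015linear2}. The single thing to be careful about is that $\bm{\alpha}$ is held fixed (independent of $t$), which the definition of dependence ``on $\T$'' supplies, so that one and the same $\bm{\alpha}$ witnesses singularity of $M(t)$ simultaneously at every point of the time scale.
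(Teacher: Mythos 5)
Your proposal is correct and follows essentially the same route as the paper: both arguments rest on a fixed nonzero right null vector $\bq$ of $M(t)$ together with statement 2 of Theorem \ref{thm of q matrix} relating invertibility of $M(t)$ to invertibility of $\chi_M(t)$, and hence to the vanishing of $W(t)=\det\chi_M(t)$. The only cosmetic difference is that you argue directly (singularity of $M(t)$ for every $t$) while the paper phrases it as a contradiction starting from $W(t_0)\neq 0$.
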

\begin{proof}
	If $\bx_1(t), \bx_2(t), \cdot\cdot\cdot,\bx_n(t)$  are right linearly dependent on $\T$, then there exists a nonzero vector $\bq=(q_1,q_2,\cdot\cdot\cdot, q_n)^{\top}\in \H^n$ such that
	\begin{equation*}
		M(t)\bq=0, ~~~\text{for all} ~~~t\in \T.
	\end{equation*}
	If there exists $t_0\in \T$ such that $W(t_0)\neq 0$,   by definition of Wronskian, we have  $\chi_M(t_0)$ is  invertible. From Theorem \ref{thm of q matrix}, it follows that $M(t_0)$ is  invertible. Thus $M(t_0)\bmeta =0$ has a unique solution $\bmeta =0$ by Theorem 4.3 of \cite{zhang1997quaternions}. This contradicts the fact that $\bmeta=\bq$ is a nonzero solution of $M(t_0)\bmeta =0$.
\end{proof}
\begin{theorem}\label{independent solutions}
	If $\bx_1(t), \bx_2(t), \cdot\cdot\cdot,\bx_n(t)$  are $n$ right linearly independent solutions of (\ref{homegeneous eqn}), then $W(t)  \neq 0$ for all  $t\in \T$.
\end{theorem}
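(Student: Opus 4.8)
The plan is to argue by contraposition: I will show that if the Wronskian vanishes at even a single point of $\T$, then the $n$ solutions must be right linearly dependent on all of $\T$, contradicting the hypothesis. So suppose, for contradiction, that $\bx_1,\dots,\bx_n$ are right linearly independent solutions of (\ref{homegeneous eqn}) yet $W(t_1)=0$ for some $t_1\in\T$.

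By definition $W(t_1)=\qd M(t_1)=\det\chi_{M(t_1)}=0$, so $\chi_{M(t_1)}$ is singular. Statement 2 of Theorem \ref{thm of q matrix} then forces $M(t_1)$ to be non-invertible, and the quaternion linear-algebra result already used in the proof of Theorem \ref{dependent solutions} (Theorem 4.3 of \cite{zhang1997quaternions}) guarantees that the system $M(t_1)\bq=0$ admits a nonzero solution $\bq=(q_1,\dots,q_n)^{\top}\in\H^n$. This is the point-wise dependence at $t_1$ that I want to propagate to the whole time scale.

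Next I would set $\bphi(t):=M(t)\bq=\sum_{i=1}^n\bx_i(t)q_i$. Applying Theorem \ref{superposition theorem}, extended by an obvious induction to $n$ summands, $\bphi$ is again a solution of (\ref{homegeneous eqn}), and by construction $\bphi(t_1)=M(t_1)\bq=0$. Since the zero function is trivially a solution of (\ref{homegeneous eqn}) with the same initial value $\bphi(t_1)=0$, the existence-and-uniqueness theorem for regressive systems yields $\bphi(t)\equiv 0$ on $\T$. Hence $M(t)\bq=0$ for every $t\in\T$ with $\bq\neq 0$, i.e. $\bx_1,\dots,\bx_n$ are right linearly dependent on $\T$, contradicting the hypothesis. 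This contradiction establishes $W(t)\neq 0$ for all $t\in\T$.

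The step I expect to be delicate is the passage from $\bphi(t_1)=0$ to $\bphi\equiv 0$. Unlike the scalar computation in Theorem \ref{dependent solutions}, this needs \emph{two-sided} uniqueness: the conclusion must hold for $t<t_1$ as well as for $t>t_1$. Backward uniqueness is precisely what the regressivity assumption $A\in\rc(\T,\H^{n\times n})$ provides, since it makes $I_n+\mu(t)A(t)$ invertible and lets the recursion defining the solution be run in reverse across right-scattered points. I would therefore take care to invoke the regressive version of the uniqueness theorem and to flag that regressivity cannot be dropped here.
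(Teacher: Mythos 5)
Your proposal is correct and follows essentially the same route as the paper: extract a nonzero $\bq$ with $M(t_1)\bq=0$ from the singularity of $\chi_{M(t_1)}$, form the solution $M(t)\bq$, and kill it globally by uniqueness of the zero solution, contradicting independence. Your closing remark about needing two-sided (backward) uniqueness, which is where regressivity enters, is a point the paper's proof glosses over, but it does not change the argument.
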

\begin{proof}
	Assume that there exists $t_0\in \T$ such that $W(t_0)=0$. By similar arguments to Theorem \ref{dependent solutions}, it is easy to see that there is  a nonzero vector $\bq=(q_1,q_2,\cdot\cdot\cdot, q_n)^{\top}\in \H^n$ such that $M(t_0)\bq=0$. Define $\bx(t):=M(t)\bq$ for all $t\in \T$. By Theorem \ref{superposition theorem}, $\bx(t)$ is a solution of  (\ref{homegeneous eqn}) with $\bx(t_0)=0$. Note that $\by(t)\equiv 0$ is the unique solution of  (\ref{homegeneous eqn}) and initial condition (\ref{IVP}) with $\bmeta=0$. Therefore $ M(t)\bq=\bx(t)=\by(t)\equiv 0$. This implies that $\bx_1(t), \bx_2(t), \cdot\cdot\cdot,\bx_n(t)$  are   right linearly  dependent, which is contradiction to the hypotheses of the theorem.
\end{proof}

To deduce the Liouville's formula, we need an important Lemma.
\begin{lemma}\label{lemma of Liou}
	Let $M(t)$ be a solution matrix of (\ref{homegeneous eqn}) and $W(t)$ be the corresponding Wronskian.  Then for any index set $\Lambda_k\in \Omega_k^{(m)}$ with $m=2n$, we have
	\begin{equation*}
		\det  \chi_M(\Lambda_k,\Delta)=\det \chi_A(\Lambda_k)\cdot W
	\end{equation*}
\end{lemma}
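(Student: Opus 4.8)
The plan is to reduce the statement to a purely complex-matrix identity via the complex adjoint representation $\chi$, and then to an elementary multilinear expansion of the determinant. First I would record two compatibility properties of $\chi$. Since delta differentiation acts componentwise on the real coordinates and commutes with both the $\cx_1,\cx_2$ decomposition and with complex conjugation, the entrywise delta derivative of the block matrix $\chi_M$ coincides with $\chi_{M^{\Delta}}$; that is, $(\chi_M)^{\Delta}=\chi_{M^{\Delta}}$. Combining this with the facts that $M$ is a solution matrix, so $M^{\Delta}=A M$, and that $\chi$ is multiplicative, $\chi_{AM}=\chi_A\chi_M$, I obtain the clean complex identity $(\chi_M)^{\Delta}=\chi_A\,\chi_M$.

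Writing $B:=\chi_M$ and $C:=\chi_A$ (both in $\C^{2n\times 2n}\otimes\T$), this identity reads $B^{\Delta}=CB$, so the $i$-th row of $B^{\Delta}$ is the combination $\sum_{j}C_{ij}\,B_{j\cdot}$ of the rows of $B$. The matrix $\chi_M(\Lambda_k,\Delta)=B(\Lambda_k,\Delta)$ is obtained from $B$ by replacing each row $i\in\Lambda_k$ with $\sum_j C_{ij}B_{j\cdot}$ while leaving the rows outside $\Lambda_k$ untouched. The next step is to expand $\det B(\Lambda_k,\Delta)$ using multilinearity of the determinant in the $k$ replaced rows, producing a sum over all choices of row indices $j_\ell$ for each $i_\ell\in\Lambda_k$.

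The decisive observation is a cancellation: in each resulting term the rows outside $\Lambda_k$ are exactly $\{B_{i\cdot}:i\notin\Lambda_k\}$, so whenever some chosen $j_\ell$ lies outside $\Lambda_k$ or the $j_\ell$ are not distinct, the matrix has a repeated row and its determinant vanishes. Hence only the terms where $(j_1,\dots,j_k)$ runs over permutations of $\Lambda_k$ survive. Writing $j_\ell=i_{\tau(\ell)}$ for $\tau\in S_k$, the associated row permutation of $\{1,\dots,2n\}$ fixes the complement of $\Lambda_k$ and has sign $\sgn(\tau)$, so each surviving term contributes $\sgn(\tau)\big(\prod_{\ell}C_{i_\ell,i_{\tau(\ell)}}\big)\det B$. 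Summing over $\tau$ recognizes the coefficient as the Leibniz expansion of $\det C(\Lambda_k)=\det\chi_A(\Lambda_k)$, and $\det B=W$, giving the claim.

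I expect the main obstacle to be the sign bookkeeping in the last step, namely verifying that the permutation of the full index set $\{1,\dots,2n\}$ induced by permuting the rows sitting in the positions $\Lambda_k$ has sign exactly $\sgn(\tau)$, so that the surviving coefficient is genuinely the principal minor $\det\chi_A(\Lambda_k)$ rather than some signed variant. The compatibility $(\chi_M)^{\Delta}=\chi_{M^{\Delta}}$ is routine but must be stated carefully, since it is what transports the quaternionic dynamic equation into a complex linear-algebra identity to which ordinary multilinearity applies.
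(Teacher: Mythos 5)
Your proof is correct, and it takes a genuinely different (and arguably cleaner) route than the paper's. Both arguments begin from the same complexification step — the identity $(\chi_M)^{\Delta}=\chi_{M^{\Delta}}=\chi_{AM}=\chi_A\chi_M$, which the paper carries out implicitly by writing the $(i_r,j)$-entries of $\chi_M(\Lambda_k,\Delta)$ explicitly in terms of $\mathfrak{C}_1(a_{i_rs})$ and $\mathfrak{C}_2(a_{i_rs})$ — but they diverge afterward. The paper performs explicit row operations, subtracting multiples of the rows \emph{outside} $\Lambda_k$ to strip away their contributions, and then factors the resulting matrix as $S\chi_M$ for a block matrix $S$ whose determinant is computed to be $\det\chi_A(\Lambda_k)$; it also restricts attention to the case $i_k\le n$ and asserts the remaining cases are similar. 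Your multilinear expansion, with the repeated-row cancellation selecting exactly the permutations of $\Lambda_k$ and the Leibniz sum reassembling into the principal minor $\det\chi_A(\Lambda_k)$, handles all index sets $\Lambda_k\subseteq\{1,\dots,2n\}$ uniformly, avoids the block-matrix bookkeeping, and makes the sign verification you flag the only delicate point — which you resolve correctly, since the induced permutation of $\{1,\dots,2n\}$ is the identity off $\Lambda_k$ and conjugate to $\tau$ on $\Lambda_k$. The trade-off is that the paper's row-operation formulation displays the intermediate matrix $S\chi_M$ concretely, which some readers may find easier to check entry by entry, whereas yours is shorter and dispenses with the unproved ``other cases can be similarly proved'' step.
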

\begin{proof}
	Without loss of generality,  we may assume $\Lambda_k=\{i_1,i_2,\cdot\cdot\cdot,i_k\}$ with $1\leq i_1<i_2 < \cdot\cdot\cdot < i_k \leq m$.  We only consider the case of $i_k\leq n$, the other cases can be similarly proved.
	
	Observe that $M^{\Delta}(t)=A(t)M(t)$, then $(i_r,j)$-component of  $\chi_M(\Lambda_k,\Delta)$ is
	\begin{equation*}
		\begin{cases}\displaystyle \sum_{s=1}^n\cx_1(a_{i_r s})\cx_1(x_{sj})-\cx_2(a_{i_r s})\overline{\cx_2(x_{sj})},&  j  \leq  n;\\
			\displaystyle  \sum_{s=1}^n\cx_1(a_{i_r s})\cx_2(x_{sj})+\cx_2(a_{i_r s})\overline{\cx_1(x_{sj})},&j>n.\end{cases}
	\end{equation*}
	If  $ i \leq n$ and  $i\notin \Lambda_k$, the    $(i,j)$-component of $\chi_M(\Lambda_k,\Delta)$   is
	\begin{equation*}
		\begin{cases}\displaystyle  \cx_1(x_{ij}) ,&  j  \leq  n;\\
			\displaystyle   \cx_2(x_{ij}) ,&j>n.\end{cases}
	\end{equation*}
	If $i>n$, the   $(i,j)$-component of $\chi_M(\Lambda_k,\Delta)$   is
	\begin{equation*}
		\begin{cases}\displaystyle - \overline{\cx_2(x_{i-n,j})} ,&  j  \leq  n;\\
			\displaystyle  \overline{ \cx_1(x_{i-n,j})} ,&j>n.\end{cases}
	\end{equation*}
	Then for  $ s \leq n$ and  $s\notin \Lambda_k$, we  do row operation of  $- \cx_1(a_{i_r s}) R_s+R_{i_r}$ for $\chi_M(\Lambda_k,\Delta)$. And for $ s >n$, we do row operation of $-\cx_2(a_{i_r, s-n})R_s+R_{i_r}$ for $\chi_M(\Lambda_k,\Delta)$. After doing this procedure for $1\leq r \leq k$, we obtain a new matrix $\chi_M(\Lambda_k,\Delta,new)$. The $(i_r,j)$-component of  $\chi_M(\Lambda_k,\Delta,new)$ is
	\begin{equation*}
		\begin{cases}\displaystyle \sum_{s=1}^k\cx_1(a_{i_r i_s})\cx_1(x_{i_sj}), &  j  \leq  n;\\
			\displaystyle  \sum_{s=1}^k\cx_1(a_{i_r i_s})\cx_2(x_{i_sj}), &j>n.\end{cases}
	\end{equation*}
	Construct a block diagonal matrix
	\begin{equation*}
		S:=\begin{pmatrix}
			I_{i_1-1}&0&0\\
			0 &H& 0\\
			0&0&I_{m-i_k}
		\end{pmatrix},
	\end{equation*}
	where $I_{s}$ is the $s$-order  identity matrix and
	\begin{equation*}
		H:=
		\begin{pmatrix}
			\cx_1(a_{i_1i_1})&0& \cx_1(a_{i_1i_2})&  0 & \cx_1(a_{i_1i_3})&\dots &  \cx_1(a_{i_1i_k})\\
			0& I_{i_2-i_1-1} & 0&  0 & 0&\dots &  0\\
			\cx_1(a_{i_2i_1})&0& \cx_1(a_{i_2i_2})&  0 & \cx_1(a_{i_2i_3})&\dots &  \cx_1(a_{i_2i_k})\\
			0& 0 & 0&  I_{i_3-i_2-1} & 0&\dots &  0 \\
			\cx_1(a_{i_3i_1})&0& \cx_1(a_{i_3i_2})&  0 & \cx_1(a_{i_3i_3})&\dots &  \cx_1(a_{i_3i_k})\\
			\vdots&\vdots&\vdots&\vdots&\vdots &\ddots&\vdots\\
			\cx_1(a_{i_ki_1})&0& \cx_1(a_{i_ki_2})&  0 & \cx_1(a_{i_ki_3})&\dots &  \cx_1(a_{i_ki_k})
		\end{pmatrix}.
	\end{equation*}
	Then $\chi_M(\Lambda_k,\Delta,new)=S \chi_M$. Therefore
	\begin{equation*}
		\det  \chi_M(\Lambda_k,\Delta)=\det \chi_M(\Lambda_k,\Delta,new)= \det S \cdot\det \chi_M=(\det H )\cdot W.
	\end{equation*}
	Observe that $\det H= \det \chi_A(\Lambda_k)$,  the proof is complete.
\end{proof}

We need a lemma about delta derivative of determinant  from Theorem 5.105 in \cite{bohner2001dynamic}.
\begin{lemma}\label{delta derivative of determinant}
	\cite{bohner2001dynamic}   Let $C=(c_{ij})_{1\leq i, j\leq m }\in \mathbb{R}^n\otimes \T$ be delta differentiable. For $1\leq k\leq m$, define $B^{(k)}=(b^{(k)}_{ij})_{1\leq i, j\leq m}$ by
	\begin{equation*}
		b^{(k)}_{ij}:=  \begin{cases}
			c^{\sigma}_{ij} & ~\text{if}~~~i<k\\
			c^{\Delta}_{ij} &~\text{if}~~~i=k\\
			c_{ij} &~\text{if}~~~i>k.
		\end{cases}
	\end{equation*}
	Then $\det C \in \mathbb{R}\otimes \mathbb{T}$ is delta differentiable, and
	\begin{equation*}
		(\det C)^{\Delta}=\sum_{k=1}^m\det B^{(k)}.
	\end{equation*}
\end{lemma}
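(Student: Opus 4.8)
The plan is to reduce the statement to the permutation (Leibniz) expansion of the determinant and to differentiate it term by term using an $m$-fold product rule on time scales. Since $C\in\mathbb{R}^n\otimes\T$ is real-valued, I would write
\begin{equation*}
  \det C=\sum_{\pi\in S_m}\operatorname{sgn}(\pi)\prod_{i=1}^m c_{i,\pi(i)},
\end{equation*}
where $S_m$ is the symmetric group on $\{1,\dots,m\}$. Each summand is a product of $m$ delta-differentiable real functions, so by items 2 and 4 of Theorem \ref{delta derivative for q-functions} (specialized to the commutative case, i.e.\ the classical product and sum rules of \cite{bohner2001dynamic}) the finite sum $\det C$ is itself delta differentiable, which settles the differentiability assertion. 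The whole problem then comes down to computing the delta derivative of a single product of $m$ factors and recombining.

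The engine of the proof is a generalized Leibniz product rule: for delta-differentiable real functions $f_1,\dots,f_m$,
\begin{equation*}
  \Bigl(\prod_{i=1}^m f_i\Bigr)^{\Delta}=\sum_{k=1}^m\Bigl(\prod_{i<k}f_i^{\sigma}\Bigr)f_k^{\Delta}\Bigl(\prod_{i>k}f_i\Bigr).
\end{equation*}
I would establish this by induction on $m$. The base case $m=2$ is exactly the first form of the product rule, $(f_1 f_2)^{\Delta}=f_1^{\Delta}f_2+f_1^{\sigma}f_2^{\Delta}$, from statement 4 of Theorem \ref{delta derivative for q-functions}. For the inductive step I would group $\prod_{i=1}^m f_i=\bigl(\prod_{i=1}^{m-1}f_i\bigr)f_m$, apply the two-factor rule to obtain $\bigl(\prod_{i<m}f_i\bigr)^{\Delta}f_m+\bigl(\prod_{i<m}f_i\bigr)^{\sigma}f_m^{\Delta}$, then substitute the inductive hypothesis for $\bigl(\prod_{i<m}f_i\bigr)^{\Delta}$ together with $\bigl(\prod_{i<m}f_i\bigr)^{\sigma}=\prod_{i<m}f_i^{\sigma}$. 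The first piece supplies the terms $k=1,\dots,m-1$ (with $f_m$ appearing unshifted in $\prod_{i>k}f_i$) and the second piece supplies the $k=m$ term, giving the claimed formula. Here it is essential that the entries are real, so that the order of factors is irrelevant and the telescoping bookkeeping stays clean.

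With the product rule in hand, I would apply it to each summand $\prod_{i=1}^m c_{i,\pi(i)}$, interchange the (finite) sums over $\pi$ and over $k$, and read off
\begin{equation*}
  (\det C)^{\Delta}=\sum_{k=1}^m\sum_{\pi\in S_m}\operatorname{sgn}(\pi)\Bigl(\prod_{i<k}c_{i,\pi(i)}^{\sigma}\Bigr)c_{k,\pi(k)}^{\Delta}\Bigl(\prod_{i>k}c_{i,\pi(i)}\Bigr)=\sum_{k=1}^m\det B^{(k)},
\end{equation*}
since for fixed $k$ the inner sum is precisely the Leibniz expansion of the matrix $B^{(k)}$ whose rows above $k$ are forward-shifted, whose $k$-th row is delta-differentiated, and whose rows below $k$ are unchanged. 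The only delicate point is the $m$-fold product rule and the matching of its $\sigma$/$\Delta$ pattern with the row structure defining $B^{(k)}$; once that identification is made, the rest is a routine interchange of finite summations. I expect the inductive verification of the product rule to be the main obstacle, everything else being formal.
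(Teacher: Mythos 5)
The paper does not prove this lemma at all; it is imported verbatim (as Theorem 5.105 of the cited book of Bohner and Peterson), so there is no in-paper argument to compare against. Your proof is correct and complete: the $m$-fold product rule $\bigl(\prod_{i=1}^m f_i\bigr)^{\Delta}=\sum_{k=1}^m\bigl(\prod_{i<k}f_i^{\sigma}\bigr)f_k^{\Delta}\bigl(\prod_{i>k}f_i\bigr)$ follows by the induction you describe from $(FG)^{\Delta}=F^{\Delta}G+F^{\sigma}G^{\Delta}$ together with $F^{\sigma}=\prod_{i<m}f_i^{\sigma}$, and applying it termwise to the Leibniz expansion and exchanging the two finite sums gives precisely $\sum_{k=1}^m\det B^{(k)}$. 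This is the standard argument for the result being quoted, and it also makes transparent the remark following the lemma that the identity persists for complex-valued entries, since only commutativity of the entries is used.
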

\begin{remark}
	Although Lemma \ref{delta derivative of determinant} is established for real-valued functions, it is easy to verify that Lemma \ref{delta derivative of determinant} is also valid   for complex-valued functions.
\end{remark}

Now we present the Liouville's formula.
\begin{theorem}
	The Wronskian of solution matrix $M(t)$  of  (\ref{homegeneous eqn}) satisfies the following  Liouville's formula.
	\begin{equation}\label{Liouville}
	W(t)=E_u(t,t_0)W(t_0)
	\end{equation}
	where $u=\sum_{k=1}^{2n}\mu^{k-1}\mathcal{V}_k(\chi_A)$ and $\mathcal{V}_k$ is defined by (\ref{coefficient of chara polyn of chi-A}).
\end{theorem}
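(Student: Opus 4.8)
The plan is to reduce the quaternionic problem to a scalar, real-valued linear dynamic equation for $W$ by passing through the complex adjoint, and then to solve that scalar equation explicitly. First I would record two structural facts. Since the complex adjoint map $A\mapsto\chi_A$ is real-linear and multiplicative and delta differentiation acts componentwise, differentiating $M^{\Delta}(t)=A(t)M(t)$ gives $\chi_M^{\Delta}(t)=\chi_A(t)\chi_M(t)$; moreover, by statement 3 of Theorem \ref{thm of q matrix}, $W(t)=\det\chi_M(t)$ is delta-differentiable and real-valued. Thus it suffices to prove that $W$ satisfies the scalar equation $W^{\Delta}=uW$ with $u=\sum_{k=1}^{2n}\mu^{k-1}\mathcal{V}_k(\chi_A)$.

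The heart of the argument is the time-scale analogue of Jacobi's formula for the derivative of a determinant, namely
\[
  W^{\Delta}=\sum_{k=1}^{2n}\mu^{k-1}\sum_{\Lambda_k\in\Omega_k^{(2n)}}\det\chi_M(\Lambda_k,\Delta).
\]
I would establish this by distinguishing the two types of points. At a right-dense point $t$ one has $\sigma(t)=t$, so every forward-shifted row in Lemma \ref{delta derivative of determinant} (valid for complex entries by the remark following it) coincides with its unshifted version, and the lemma collapses to the classical formula $W^{\Delta}=\sum_{k}\det\chi_M(\{k\},\Delta)$; since $\mu(t)=0$, only the $k=1$ term of the displayed identity survives, matching this exactly. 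At a right-scattered point I would use $\chi_M(\sigma(t))=(I_{2n}+\mu(t)\chi_A(t))\chi_M(t)$, which follows from $\chi_M^{\Delta}=\chi_A\chi_M$ and statement 1 of Theorem \ref{delta derivative for q-functions}; expanding by multilinearity in the rows replaced by $\chi_M^{\Delta}=(\chi_M^{\sigma}-\chi_M)/\mu$ and collecting terms by a binomial cancellation reproduces the same right-hand side. With the identity in hand I would invoke Lemma \ref{lemma of Liou}: for each $\Lambda_k$ it gives $\det\chi_M(\Lambda_k,\Delta)=\det\chi_A(\Lambda_k)\,W$, so summing over $\Lambda_k\in\Omega_k^{(2n)}$ and using the definition (\ref{coefficient of chara polyn of chi-A}) of $\mathcal{V}_k$ yields $\sum_{\Lambda_k}\det\chi_M(\Lambda_k,\Delta)=\mathcal{V}_k(\chi_A)\,W$, whence $W^{\Delta}=uW$. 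I expect the right-scattered bookkeeping—reconciling the multilinear expansion with the row-replacement convention of Lemma \ref{lemma of Liou} and carrying the $\mu^{k-1}$ weights correctly—to be the main obstacle. A convenient shortcut that sidesteps the combinatorics is to note directly that $\det(I_{2n}+\mu\chi_A)=\sum_{k=0}^{2n}\mu^{k}\mathcal{V}_k(\chi_A)=1+\mu u$, so that $W(\sigma(t))=(1+\mu u)W(t)$ and hence $W^{\Delta}=uW$ at right-scattered points immediately.

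Finally I would solve the scalar equation. By statement 3 of Theorem \ref{thm of q matrix} the characteristic polynomial of $\chi_A$ has real coefficients, so each $\mathcal{V}_k(\chi_A)$ is real and $u$ is a real-valued rd-continuous function; regressivity of $A$ together with Lemma \ref{equivalent conditions of regressivity 2} shows $u\in\rc(\T,\mathbb{R})$. Consequently $W^{\Delta}=uW$ is a scalar regressive dynamic equation with the known initial value $W(t_0)=\det\chi_M(t_0)$, whose unique solution is $\psi_u(t,t_0)W(t_0)$ by Proposition \ref{state transition 1D}; since $u$ is real-valued, $\psi_u(t,t_0)=E_u(t,t_0)$ by Theorem \ref{exp of q-constant}. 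This gives $W(t)=E_u(t,t_0)W(t_0)$, which is exactly (\ref{Liouville}).
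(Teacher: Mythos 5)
Your proposal is correct and follows the same overall strategy as the paper: reduce everything to the scalar real equation $W^{\Delta}=uW$ and then solve it via Theorem \ref{exp of q-constant} using the fact that $u$ is real-valued (statement 3 of Theorem \ref{thm of q matrix}). The difference lies in how you establish $W^{\Delta}=uW$. The paper runs a single unified computation valid at all points: it applies Lemma \ref{delta derivative of determinant} to get $W^{\Delta}=\sum_s\det B^{(s)}$, substitutes $\chi_M^{\sigma}(k,:)=\chi_M(k,:)+\mu\chi_M^{\Delta}(k,:)$ into the shifted rows, expands each $\det B^{(s)}$ multilinearly into the terms $\mu^{k-1}\det\chi_M(\Lambda_k,\Delta)$ indexed by $\max\Lambda_k=s$, reindexes the double sum, and then invokes Lemma \ref{lemma of Liou} for every $\Lambda_k$. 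You instead split into right-dense and right-scattered points. At right-dense points your argument coincides with the $\mu=0$ specialization of the paper's (only the $k=1$, i.e.\ trace, term survives, so you only need the easiest case of Lemma \ref{lemma of Liou}). At right-scattered points your shortcut $\chi_M(\sigma(t))=(I_{2n}+\mu\chi_A)\chi_M(t)$, hence $W(\sigma(t))=\det(I_{2n}+\mu\chi_A)\,W(t)=(1+\mu u)W(t)$ and $W^{\Delta}=(W^{\sigma}-W)/\mu=uW$, is genuinely different and arguably cleaner: it bypasses Lemma \ref{lemma of Liou} and the entire $\Omega_k^{(2n)}$ bookkeeping there, needing only multiplicativity of the determinant and the standard principal-minor expansion of $\det(I+\mu B)$, which is exactly how $u$ is built. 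What you lose relative to the paper is a single formula valid uniformly in $t$ (and the paper's route exercises Lemma \ref{lemma of Liou} in full, which it proved for that purpose); what you gain is that the hard combinatorics is confined to a one-line identity at scattered points. Both routes are sound.
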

\begin{proof}
	From Lemma \ref{delta derivative of determinant}, we see that
	\begin{equation*}
		\begin{split}
			W^{\Delta} & =(\det \chi_M)^{\Delta} \\
			& =\det B^{(1)}+\det B^{(2)}+\dots+ \det B^{(2n)}\\
			&=\det
			\begin{pmatrix}
				\chi_M^{\Delta}(1,:) \\
				\chi_M(2,:) \\
				\vdots \\
				\chi_M(2 n,:)
			\end{pmatrix}+
			\det
			\begin{pmatrix}
				\chi_M^{\sigma}(1,:) \\
				\chi_M^{\Delta}(2,:) \\
				\vdots \\
				\chi_M(2 n,:)
			\end{pmatrix}+\dots+
			\det
			\begin{pmatrix}
				\chi_M^{\sigma}(1,:) \\
				\chi_M^{\sigma}(2,:) \\
				\vdots \\
				\chi_M^{\Delta}(2 n,:)
			\end{pmatrix},
		\end{split}
	\end{equation*}
	where
	\begin{equation*}
		B^{(s)}=\begin{pmatrix}
			\chi_M^{\sigma}(1,:) \\
			\chi_M^{\sigma}(2,:) \\
			\vdots \\
			\chi_M^{\Delta}(s,:)\\
			\vdots \\
			\chi_M(2 n,:)
		\end{pmatrix}
	\end{equation*}
	and $\chi_M(j,:)$, $(1\leq j\leq 2n)$ is the $j$-th row of  $\chi_M$. Observe that
	\begin{equation*}
		\chi_M^{\sigma}(k,:)=\chi_M (k,:)+\mu \chi_M^{\Delta}(k,:) ~\text{for} ~ 1\leq k< s.
	\end{equation*}
	Thus
	\begin{equation*}
		\det B^{(s)}=\sum_{k=1}^{s}\sum_{{\Lambda_k\in \Omega_k^{(2n)}} \atop{\max \Lambda_k=s}}\mu^{k-1}\det \chi_M(\Lambda_k,\Delta)
	\end{equation*}
	By Lemma \ref{lemma of Liou}, we obtain
	\begin{equation*}
		\begin{split}
			W^{\Delta}  =\sum_{s=1}^{ 2n }\det B^{(s)}& =\sum_{s=1}^{2n}\sum_{k=1}^{s}\sum_{{\Lambda_k\in \Omega_k^{(2n)}} \atop{\max \Lambda_k=s}}\mu^{k-1}\det \chi_A(\Lambda_k)\cdot W\\
			&=\sum_{k=1}^{2n}\sum_{s=k}^{2n}\sum_{{\Lambda_k\in \Omega_k^{(2n)}} \atop{\max \Lambda_k=s}}\mu^{k-1}\det \chi_A(\Lambda_k)\cdot W \\
			&=\sum_{k=1}^{2n} \sum_{{\Lambda_k\in \Omega_k^{(2n)}}}\mu^{k-1}\det \chi_A(\Lambda_k)\cdot W\\
			&=uW.
		\end{split}
	\end{equation*}
	In fact, $\mathcal{V}_1(\chi_A),\mathcal{V}_2(\chi_A), \cdot\cdot\cdot, \mathcal{V}_{2n}(\chi_A)$  are  coefficients of  the characteristic polynomial of $ \chi_A$. Then, by  Theorem
	\ref{thm of q matrix} and the definition of $u$, we conclude  that  $u$ is real-valued.  From Theorem \ref{exp of q-constant} follows   the  Liouville's formula (\ref{Liouville}).
\end{proof}

\begin{remark}
	If $\T=\mathbb{R}$, then the graininess function $\mu$  vanishes identically. Thus
	$$u(t)=\mathcal{V}_1(\chi_A(t))= tr\chi_A(t)=2 \Re \left(trA(t)\right).$$
	Therefore     the  Liouville's formula  becomes
	\begin{equation*}
		W(t)=\exp \left( 2\int_{t_0}^t \Re(trA(\tau))d\tau\right)W(t_0).
	\end{equation*}
	If  $\T=\mathbb{Z}$,  then the graininess function $\mu$   is identically equal to $1$. Therefore
	$u(t)$ equals to the sum of all coefficients of the characteristic polynomial of $\chi_A(t)$. In this case,  the  Liouville's formula  becomes
	\begin{equation*}
		W(t)=\exp\left(  \sum_{\tau=t_0}^{t-1}\xi_1(u(\tau))\right)W(t_0).
	\end{equation*}
	
\end{remark}

\begin{corollary}\label{prop of wronskian}
	Let $W(t)$ be the Wronskian of solution matrix $M(t)$  of  (\ref{homegeneous eqn}). Then $W(t)=0$  at same $t_0\in \T$ if and only if $W(t)=0$ for all $t\in \T$.
\end{corollary}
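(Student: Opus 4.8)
The plan is to read this corollary straight off the Liouville's formula (\ref{Liouville}) just proved, together with the fact that the generalized quaternion exponential function never vanishes. Since the biconditional is symmetric and one of the two implications is trivial, essentially all the content lies in a single direction, and that direction is already packaged inside the Liouville formula.

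First I would record that, by (\ref{Liouville}), every solution matrix $M(t)$ of (\ref{homegeneous eqn}) satisfies $W(t)=E_u(t,t_0)W(t_0)$ for all $t\in\T$, where $u=\sum_{k=1}^{2n}\mu^{k-1}\mathcal{V}_k(\chi_A)$ is real-valued. For this identity to be meaningful I must first check that $u\in\rc(\T,\H)$, i.e. that $u$ is regressive so that $E_u$ is defined: since $A$ is regressive, Lemma \ref{equivalent conditions of regressivity 1} gives the invertibility of $I_{2n}+\mu(t)\chi_A(t)$, and statement 3 of Lemma \ref{equivalent conditions of regressivity 2} then yields $1+\mu(t)u(t)\neq 0$ for all $t\in\T^{\kappa}$. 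Hence $E_u(t,t_0)$ is well-defined for every $t$.

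The crucial point is that $E_u(t,t_0)\neq 0$ for every $t$, which holds because the generalized quaternion exponential function is nowhere zero for any regressive rd-continuous coefficient (as noted immediately after its definition). Granting this, the forward implication is immediate: if $W(t_0)=0$ at some $t_0\in\T$, then $W(t)=E_u(t,t_0)\cdot 0=0$ for all $t\in\T$. The converse is trivial, since $W(t)=0$ for all $t$ forces in particular $W(t_0)=0$ at the chosen $t_0$.

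I do not expect any serious obstacle here, as the entire analytic weight has already been borne by the proof of the Liouville's formula and by the non-vanishing of $E_u$. The only step requiring care is the well-definedness of $E_u$, that is, confirming that the real-valued scalar $u$ is regressive, which I dispatch through the two regressivity lemmas above.
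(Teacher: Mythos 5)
Your proposal is correct and matches the paper's intent exactly: the corollary is stated without proof as an immediate consequence of the Liouville formula $W(t)=E_u(t,t_0)W(t_0)$ together with the non-vanishing of the generalized exponential function, which is precisely your argument. Your extra care in verifying that $u$ is regressive (via Lemma \ref{equivalent conditions of  regressivity 1} and Lemma \ref{equivalent conditions of  regressivity 2}) is a sound and welcome addition, not a deviation.
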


As an immediate consequence of Theorem \ref{dependent solutions}, \ref{independent solutions} and Corollary \ref{prop of wronskian}, we have
\begin{theorem}\label{independent or dependent}
	Let  $A\in \mathcal{R}(\T,\H^{n\times n})$   and  $W(t)$ be the Wronskian of solution matrix $M(t)$  of  (\ref{homegeneous eqn}). Then $\bx_1(t),\bx_2(t),\cdot\cdot\cdot, \bx_n(t)$ are right linearly  dependent on $\T$ if and only if $W(t)=0$ at some $t_0\in T$. And  $\bx_1(t),\bx_2(t),\cdot\cdot\cdot, \bx_n(t)$ are right linearly independent on $\T$ if and only if $W(t)\neq0$ at some $t_0\in T$.
\end{theorem}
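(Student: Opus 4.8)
The statement to prove, Theorem~\ref{independent or dependent}, is explicitly billed as an ``immediate consequence'' of Theorem~\ref{dependent solutions}, Theorem~\ref{independent solutions} and Corollary~\ref{prop of wronskian}, so my strategy is simply to assemble those three ingredients into the two claimed equivalences rather than to prove anything from scratch. The only genuinely new work is logical bookkeeping: upgrading the one-directional implications of Theorems~\ref{dependent solutions} and~\ref{independent solutions} into ``iff'' statements by feeding them the pointwise-to-global transfer supplied by Corollary~\ref{prop of wronskian}. I would first fix notation once: let $\bx_1,\dots,\bx_n$ be the columns of the solution matrix $M(t)$ and let $W(t)=\qd M(t)$ be its Wronskian, so that the two assertions of the theorem are genuinely about the same family of functions.

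For the first equivalence (right linear dependence $\iff$ $W(t_0)=0$ at some $t_0$), I would argue both directions. For the forward direction, assume $\bx_1,\dots,\bx_n$ are right linearly dependent on $\T$; then Theorem~\ref{dependent solutions} gives $W(t)=0$ for \emph{all} $t\in\T$, and in particular $W(t_0)=0$ at some (indeed every) $t_0$. For the converse, suppose $W(t_0)=0$ for some $t_0\in\T$. By Corollary~\ref{prop of wronskian} this forces $W(t)=0$ for all $t\in\T$; then I invoke Theorem~\ref{independent solutions} in contrapositive form: since $\bx_1,\dots,\bx_n$ are solutions of the homogeneous system and their Wronskian vanishes identically, they cannot be right linearly independent, hence they are right linearly dependent. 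Note this converse crucially uses that the $\bx_i$ are genuine solutions of~(\ref{homegeneous eqn}), which is where the hypothesis $A\in\mathcal{R}(\T,\H^{n\times n})$ and Theorem~\ref{independent solutions} enter.

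The second equivalence (right linear independence $\iff$ $W(t_0)\neq0$ at some $t_0$) I would obtain by contraposition of the first, using Corollary~\ref{prop of wronskian} to translate the quantifiers correctly. Concretely, ``$W\neq0$ at some $t_0$'' is equivalent, via Corollary~\ref{prop of wronskian}, to ``$W\neq0$ for all $t$,'' which is the logical negation of ``$W=0$ at some $t_0$''; and right linear independence is by definition the negation of right linear dependence. Since the first equivalence matches these two negations, the second equivalence follows immediately. Alternatively I could argue it directly and symmetrically: independence plus Theorem~\ref{independent solutions} gives $W\neq0$ everywhere, while $W(t_0)\neq0$ plus Theorem~\ref{dependent solutions} (contrapositive) rules out dependence.

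I do not expect a serious obstacle here; the substance was already carried by the three cited results, and the delicate noncommutative and $p$-determinant issues were handled in their proofs (in particular the use of Theorem~\ref{thm of q matrix} to pass between invertibility of $M(t_0)$ and of $\chi_M(t_0)$). The one point requiring mild care is the interplay of the existential quantifier ``at some $t_0$'' with the ``for all $t$'' conclusions of the earlier theorems; Corollary~\ref{prop of wronskian} is precisely the lemma that makes ``at some $t_0$'' and ``for all $t$'' interchangeable for the Wronskian, so I would make sure to cite it at exactly the step where the quantifier is switched. That is the only place where an otherwise routine chain of implications could be mis-stated.
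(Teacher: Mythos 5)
Your proposal is correct and follows exactly the route the paper intends: the paper gives no explicit proof, stating only that the theorem is an immediate consequence of Theorem \ref{dependent solutions}, Theorem \ref{independent solutions} and Corollary \ref{prop of wronskian}, and your assembly of those three results (using the corollary precisely to switch between ``at some $t_0$'' and ``for all $t$'') is the intended argument, spelled out in full.
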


Let $A\in \mathcal{R}(\T,\H^{n\times n})$  and  $ \bme_k$ be the  $k$-th column of $I_n$. Then there exists a unique solution $\bx_k(t)$ of (\ref{homegeneous eqn}) satisfying $\bx_k(t_0)=\bme_k$.
Thus $M(t)=(\bx_1(t), \bx_2(t), \cdot\cdot\cdot,\bx_n(t))$  is a solution matrix of  (\ref{homegeneous eqn}). Let $W(t)$ be the Wronskian of $M(t)$. It is easy to see that
$W(t_0)=1\neq0$. Then we immediately have
\begin{theorem}\label{fundamental matrix}
	If $A\in \mathcal{R}(\T,\H^{n\times n})$, then (\ref{homegeneous eqn}) has exactly  $n$ right linearly independent solutions $\bx_1(t), \bx_2(t), \cdot\cdot\cdot,\bx_n(t)$. The corresponding solution matrix $M(t)$ is called fundamental solution matrix of
	(\ref{homegeneous eqn}).
\end{theorem}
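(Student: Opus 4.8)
The plan is to build the required solutions by prescribing the columns of $I_n$ as initial data, certify their independence through the Wronskian, and then establish maximality by transporting a linear relation at the initial instant back to all of $\T$.

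First I would invoke the existence and uniqueness theorem: for each $1\le k\le n$ there is a unique solution $\bx_k$ of (\ref{homegeneous eqn}) with $\bx_k(t_0)=\bme_k$, exactly as noted in the paragraph preceding the statement. Assembling these as the columns of $M(t)$ gives $M(t_0)=I_n$, hence $\chi_M(t_0)=I_{2n}$ and $W(t_0)=\qd M(t_0)=\det\chi_M(t_0)=1\neq 0$. By Theorem \ref{independent or dependent} (equivalently Theorem \ref{dependent solutions} directly), a nonvanishing Wronskian at $t_0$ forces $\bx_1,\bx_2,\cdots,\bx_n$ to be right linearly independent on $\T$. This disposes of the existence half, and naming $M(t)$ the fundamental solution matrix is then just terminology.

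The real content lies in the word \emph{exactly}: I must show that no collection of $n+1$ solutions can be right linearly independent. Given solutions $\by_1,\by_2,\cdots,\by_{n+1}$, their initial values $\by_1(t_0),\cdots,\by_{n+1}(t_0)$ are $n+1$ vectors in $\H^n$. Since $\H$ is a division ring, $\H^n$ is a free right $\H$-module of rank $n$, so these $n+1$ vectors admit a nontrivial right linear relation $\sum_{i=1}^{n+1}\by_i(t_0)\alpha_i=0$ with not all $\alpha_i$ zero. By the superposition principle (Theorem \ref{superposition theorem}), $\by(t):=\sum_{i=1}^{n+1}\by_i(t)\alpha_i$ solves (\ref{homegeneous eqn}) and satisfies $\by(t_0)=0$; uniqueness (the zero solution being the only one with zero initial data, as used in the proof of Theorem \ref{independent solutions}) then gives $\by\equiv 0$, so the same relation holds for all $t\in\T$. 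Hence any $n+1$ solutions are right linearly dependent, and $n$ is the exact count.

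The main obstacle is this maximality step, and within it the one nontrivial input is the rank-$n$ property of $\H^n$ as a right module, namely that over the noncommutative division ring $\H$ any $n+1$ vectors are right linearly dependent. I would cite the quaternion linear algebra of \cite{zhang1997quaternions} for this, taking care that \emph{right} (not left) dependence is used throughout, so that it is compatible with the right-module structure under which (\ref{homegeneous eqn}) and Theorem \ref{superposition theorem} operate.
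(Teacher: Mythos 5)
Your proposal is correct and its existence half coincides with the paper's own argument: the paper constructs $M(t)$ from the solutions with initial data $\bx_k(t_0)=\bme_k$, observes $W(t_0)=1\neq 0$, and concludes right linear independence from Theorem \ref{independent or dependent}. The maximality step (that no $n+1$ solutions can be right linearly independent) is not written out in the paper at all --- it is left implicit in the word ``exactly'' --- and your argument for it, passing a nontrivial right linear relation among the $n+1$ initial vectors in the rank-$n$ free right $\H$-module $\H^n$ through superposition and uniqueness to all of $\T$, is sound and fills that gap.
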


Theorem \ref{fundamental matrix} illustrates the existence of fundamental solution matrix. In fact, fundamental solution matrix is not unique. Suppose that $M(t)$ is a fundamental solution matrix of (\ref{homegeneous eqn}). Multiplying $M(t)$ with an arbitrary non-singular quaternion matrix on the right-side, it remains to be a fundamental solution matrix.  Using Theorem \ref{independent or dependent},  we can easily determine whether $M(t)$ is a  fundamental solution matrix or not.

\begin{corollary}\label{whether fundamental solution matrix or not}
	A solution matrix $M(t)$ of (\ref{homegeneous eqn}) is a fundamental solution matrix if and only if its corresponding Wronskian $W(t)\neq 0$ for some $t_0\in\T$. Moreover, if $M(t)$ is a fundamental solution matrix,  then   $M(t)$  is invertible for all  $t\in \T$.
\end{corollary}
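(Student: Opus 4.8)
The plan is to treat the two assertions separately: the equivalence follows almost immediately from the characterization of right linear independence already in hand, and the invertibility statement follows from the fact that the vanishing of the Wronskian is a global, rather than pointwise, phenomenon.

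For the equivalence, I would start from the definition: $M(t)$ is a fundamental solution matrix precisely when its columns $\bx_1(t),\dots,\bx_n(t)$ are $n$ right linearly independent solutions of (\ref{homegeneous eqn}). Theorem \ref{independent or dependent} asserts that these solutions are right linearly independent on $\T$ if and only if $W(t)\neq 0$ at some $t_0\in\T$. Chaining the two equivalences yields the claimed characterization directly, so this half needs no fresh computation.

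For the ``moreover'' part, suppose $M(t)$ is a fundamental solution matrix. By the equivalence just established, $W(t_0)\neq 0$ for some $t_0\in\T$, and Corollary \ref{prop of wronskian} upgrades this to $W(t)\neq 0$ for \emph{all} $t\in\T$. Unwinding the definition of the Wronskian gives $W(t)=\qd M(t)=\det\chi_M(t)$, so $\det\chi_M(t)\neq 0$ for every $t$; equivalently $\chi_M(t)$ is invertible for every $t$. I would then invoke statement 2 of Theorem \ref{thm of q matrix}, which says a quaternion matrix is invertible if and only if its complex adjoint is, to conclude that $M(t)$ itself is invertible for all $t\in\T$.

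The argument is short, and the only genuinely non-routine link is the passage from non-vanishing of $W$ to invertibility of $M$. This rests entirely on the bridge $\qd M=\det\chi_M$ together with the invertibility criterion of Theorem \ref{thm of q matrix}, which converts a statement about the ordinary (real-valued) determinant of the adjoint $\chi_M$ into a statement about the quaternion matrix $M$. The main point to keep in mind is that $W$ is defined through $\chi_M$ rather than through any direct quaternionic determinant, so every inference about $M$ must be routed through $\chi_M$; once that is respected, there is no real obstacle.
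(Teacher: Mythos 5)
Your proposal is correct and matches the paper's intent: the paper gives no separate proof but explicitly states beforehand that the corollary follows from Theorem \ref{independent or dependent}, and your route through Corollary \ref{prop of wronskian} and statement 2 of Theorem \ref{thm of q matrix} for the invertibility claim is the same bridge ($W=\det\chi_M\neq 0$ implies $\chi_M$ invertible implies $M$ invertible) already used in the proof of Theorem \ref{dependent solutions}. No gaps.
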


Next result describes the structure of the general solution of (\ref{homegeneous eqn}). It is a right $\H$-module. That implies that if we know $n$ right linearly independent
solutions of (\ref{homegeneous eqn}), then we actually know
all possible solutions of  (\ref{homegeneous eqn}), since any other solution is just a right linear combination of known solutions.

\begin{theorem}\label{structure of the general solution}
	Let $M(t)=(\bx_1(t), \bx_2(t), \cdot\cdot\cdot,\bx_n(t))$  be a fundamental solution matrix of  (\ref{homegeneous eqn}). Then any solution of (\ref{homegeneous eqn}) can be written as
	\begin{equation}\label{general sol}
	\bx(t)=M(t)\bq
	\end{equation}
	where $\bq=(q_1,q_2,\cdot\cdot\cdot,q_n)^{\top}\in \H^n$ is undetermined quaternion vector. The totality of the solutions form a right $\H$-module.
\end{theorem}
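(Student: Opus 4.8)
The plan is to establish the statement in two stages: first the representation $\bx(t)=M(t)\bq$ for a suitable constant vector $\bq\in\H^n$, and then the right $\H$-module structure of the solution set.

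For the representation, I would begin by fixing an arbitrary base point $t_0\in\T$. Given any solution $\bx(t)$ of (\ref{homegeneous eqn}), Corollary \ref{whether fundamental solution matrix or not} guarantees that $M(t_0)$ is invertible, so I can define the constant quaternion vector $\bq:=M(t_0)^{-1}\bx(t_0)$. Setting $\by(t):=M(t)\bq=\sum_{k=1}^{n}\bx_k(t)q_k$, I observe that $\by(t)$ is a right linear combination of the solutions $\bx_1,\dots,\bx_n$, hence itself a solution of (\ref{homegeneous eqn}) by the $n$-term extension of Theorem \ref{superposition theorem}. By construction $\by(t_0)=M(t_0)\bq=\bx(t_0)$, so $\bx$ and $\by$ are two solutions of (\ref{homegeneous eqn}) satisfying the same initial condition (\ref{IVP}) with $\bmeta=\bx(t_0)$. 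The existence and uniqueness theorem for the initial value problem then forces $\bx(t)=\by(t)=M(t)\bq$ for all $t\in\T$, which is the desired formula (\ref{general sol}).

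To verify the module structure, I would note that Theorem \ref{superposition theorem} already shows the solution set is closed under addition and under right multiplication by quaternion scalars; together with the zero solution and the fact that these operations are inherited from the right $\H$-module $\H^n\otimes\T$, this makes the solution set a right $\H$-submodule. Moreover, the representation just proved shows that the map $\bq\mapsto M(t)\bq$ is a surjection from $\H^n$ onto the solution set, and it is injective because $M(t)$ is invertible; since this map is right $\H$-linear, the solution module is in fact isomorphic to the free right $\H$-module $\H^n$ of rank $n$.

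I expect no serious obstacle here, since the two decisive ingredients, invertibility of a fundamental matrix (Corollary \ref{whether fundamental solution matrix or not}) and uniqueness for the initial value problem, are already in hand. The only point demanding care is noncommutativity: the scalar vector $\bq$ must be placed on the right of $M(t)$ so that $M(t)\bq$ remains a solution via the superposition theorem, and the module axioms must be checked for the right action rather than a left one.
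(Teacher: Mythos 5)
Your proposal is correct and follows essentially the same route as the paper: invert $M(t_0)$ to define $\bq$, observe that $M(t)\bq$ solves the system with the same initial data, and invoke uniqueness. The only difference is that you spell out the right $\H$-module verification (and the isomorphism with $\H^n$), which the paper leaves implicit; that is a sound and welcome elaboration, not a different argument.
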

\begin{proof}
	For any $t_0\in\T$, by  Corollary \ref{whether fundamental solution matrix or not}, $M(t_0)$ is  invertible. Thus $M(t_0)\bmeta=\bx(t_0)$ has a unique solution $\bmeta=\bq$. Note that $M(t)\bq$ is also a solution of \ref{homegeneous eqn} with initial condition $\bphi(t_0)=\bx(t_0)$. By the  uniqueness theorem,  the equality  (\ref{general sol}) holds.
\end{proof}

Let $A\in \mathcal{R}(\T,\H^{n\times n})$ and $M(t)$  be a fundamental solution matrix of  (\ref{homegeneous eqn}). Define the state-transition matrix
$$\Psi_A(t,s):=M(t)M^{-1}(s).$$
It is not difficult to verify that $\Psi_A(t,s)$ is well-defined. Suppose that $M_1(t)$ be   a fundamental solution matrix   which is  different from  $M(t)$. It is easy to see that $M_1 (t) M^{-1}_1(s)=M  (t) M^{-1}(s)$ by uniqueness theorem.    By the similar arguments  to Proposition \ref{state transition 1D}, we have
\begin{proposition}
	Let $A \in \mathcal{R}(\T,\H^{n\times n})$ and $t,r,s\in\T$, then the following assertions hold.
	\begin{enumerate}[(i)]
		\item   $\Psi_A(t,s)$    is invertible.
		\item  $\Psi_0(t,s)\equiv I_n$  and  $\Psi_A(t,t)=I_n$.
		\item  $\Psi_A(t,s)\Psi_A(s,r)=\Psi_A(t,r)$. In particular, $\Psi_A^{-1}(t,s)=\Psi_A(s,t)$.
		\item  Any solution $\bx(t)$ of (\ref{homegeneous eqn})  with  $\bx(t_0)=\bmeta_0$ can be expressed by $\bx(t)=\Psi_A(t,t_0)\bmeta_0$.
	\end{enumerate}
\end{proposition}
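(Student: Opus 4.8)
The plan is to obtain all four assertions directly from the definition $\Psi_A(t,s) = M(t)M^{-1}(s)$ together with two facts already in hand: a fundamental solution matrix $M(t)$ is invertible for every $t\in\T$ (Corollary \ref{whether fundamental solution matrix or not}), and every solution of (\ref{homegeneous eqn}) has the form $M(t)\bq$ (Theorem \ref{structure of the general solution}). The genuinely nontrivial content, that $M(t)$ is invertible everywhere rather than merely at one point, has already been packaged into Corollary \ref{whether fundamental solution matrix or not} via Liouville's formula, so the proposition itself reduces to formal manipulation. The one place where the noncommutativity of $\H$ must be watched is that all the telescoping cancellations below rely on $M^{-1}(s)M(s)=M(s)M^{-1}(s)=I_n$; since $M(s)$ is a square invertible quaternion matrix its one-sided inverses coincide with a genuine two-sided inverse, so these cancellations are legitimate.

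First I would dispatch assertion 1: as $M(t)$ and $M(s)$ are invertible for all $t,s$, the product $M(t)M^{-1}(s)$ is invertible with inverse $M(s)M^{-1}(t)$. For assertion 3 I would insert the definition and cancel the interior factor,
\[
\Psi_A(t,s)\Psi_A(s,r) = M(t)M^{-1}(s)M(s)M^{-1}(r) = M(t)M^{-1}(r) = \Psi_A(t,r),
\]
and then specialize $r=t$ to read off $\Psi_A(t,s)\Psi_A(s,t)=\Psi_A(t,t)=M(t)M^{-1}(t)=I_n$, which simultaneously yields the second half of assertion 2 ($\Psi_A(t,t)=I_n$) and the inverse identity $\Psi_A^{-1}(t,s)=\Psi_A(s,t)$.

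For the remaining half of assertion 2 I would invoke the well-definedness of $\Psi_A$ noted just above the proposition (independence of the chosen fundamental matrix): when $A\equiv 0$, equation (\ref{homegeneous eqn}) reads $\bphi^{\Delta}=0$, so the constant matrix $M(t)\equiv I_n$ is a solution matrix with Wronskian $\qd I_n=1\neq 0$, hence fundamental by Corollary \ref{whether fundamental solution matrix or not}; computing $\Psi_0$ with this choice gives $I_n\cdot I_n^{-1}=I_n$. Finally, for assertion 4 I would use Theorem \ref{structure of the general solution} to write any solution as $\bx(t)=M(t)\bq$, evaluate at $t=t_0$ to get $\bmeta_0=M(t_0)\bq$, solve $\bq=M^{-1}(t_0)\bmeta_0$ (legitimate since $M(t_0)$ is invertible), and substitute back to obtain $\bx(t)=M(t)M^{-1}(t_0)\bmeta_0=\Psi_A(t,t_0)\bmeta_0$.

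Since the argument is essentially formal, the real work is conceptual: the only obstacle worth flagging is compatibility between the right-module structure of the solution set (solutions are $M(t)\bq$, with $\bq$ multiplied on the right) and the left action of the transition matrix $\Psi_A(t,t_0)$ on $\bmeta_0$. Because $\bq$ and $\bmeta_0$ are honest quaternion vectors and $M^{-1}(t_0)$ multiplies $\bmeta_0$ on the left, no reordering of quaternion scalars is needed, so the substitution in assertion 4 goes through without disturbing any noncommutative products.
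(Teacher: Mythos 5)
Your proposal is correct and follows essentially the same route the paper intends: the paper dispenses with an explicit proof by appealing to ``similar arguments to Proposition \ref{state transition 1D},'' and the substance of those arguments is exactly what you write out, namely formal manipulation of $\Psi_A(t,s)=M(t)M^{-1}(s)$ once the everywhere-invertibility of $M(t)$ (Corollary \ref{whether fundamental solution matrix or not}, via Liouville's formula) and the representation $\bx(t)=M(t)\bq$ (Theorem \ref{structure of the general solution}) are in hand. Your remarks on two-sided inverses of square quaternion matrices and on the compatibility of the right-module structure with the left action of $\Psi_A$ are the right points to flag, and nothing is missing.
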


After studying the homogeneous equations (\ref{homegeneous eqn}), we also consider the nonhomogeneous equations (\ref{nonhomegeneous eqn}).  It is easy to verify  the following two results.
\begin{lemma}
	If $\bphi^{NH}(t)$ and $\bphi^{H}(t)$ are solutions of   (\ref{nonhomegeneous eqn})     and (\ref{homegeneous eqn}) respectively. then $\bphi^{NH}(t)+\bphi^{H}(t)$ is a solution of  (\ref{nonhomegeneous eqn}).
\end{lemma}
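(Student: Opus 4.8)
The plan is to prove directly that $\bphi^{NH}(t)+\bphi^{H}(t)$ satisfies the nonhomogeneous equation (\ref{nonhomegeneous eqn}) by computing its delta derivative and using linearity. First I would set $\bpsi(t):=\bphi^{NH}(t)+\bphi^{H}(t)$ and apply statement 2 of Theorem \ref{delta derivative for q-functions}, which guarantees that the sum of two delta-differentiable functions is delta differentiable and that the delta derivative distributes over addition. This immediately yields
\begin{equation*}
  \bpsi^{\Delta}(t)=\left(\bphi^{NH}\right)^{\Delta}(t)+\left(\bphi^{H}\right)^{\Delta}(t).
\end{equation*}

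Next I would substitute the two defining equations. By hypothesis $\bphi^{NH}$ solves (\ref{nonhomegeneous eqn}), so $\left(\bphi^{NH}\right)^{\Delta}(t)=A(t)\bphi^{NH}(t)+\bfunc(t)$, and $\bphi^{H}$ solves (\ref{homegeneous eqn}), so $\left(\bphi^{H}\right)^{\Delta}(t)=A(t)\bphi^{H}(t)$. Adding these and using the distributivity of quaternion matrix multiplication over vector addition gives
\begin{equation*}
  \bpsi^{\Delta}(t)=A(t)\bphi^{NH}(t)+\bfunc(t)+A(t)\bphi^{H}(t)=A(t)\left(\bphi^{NH}(t)+\bphi^{H}(t)\right)+\bfunc(t)=A(t)\bpsi(t)+\bfunc(t),
\end{equation*}
which is exactly (\ref{nonhomegeneous eqn}) with $\bphi$ replaced by $\bpsi$. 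Hence $\bpsi$ is a solution.

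There is essentially no obstacle here, since the argument rests only on additivity of the delta derivative and on left-distributivity of the matrix-vector product, both of which are valid in the quaternion setting; crucially, the multiplication by $A(t)$ occurs on the left, so the noncommutativity of $\H$ causes no trouble. The only point worth a remark is that the product rule subtleties emphasized elsewhere in the paper do not enter, because no product of two nonconstant functions is being differentiated—only sums appear. I would therefore present the computation in a single short display and note that the result holds for all $t\in\T^{\kappa}$.
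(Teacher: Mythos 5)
Your proof is correct and is exactly the routine verification the paper has in mind (the authors omit it, remarking only that the result is easy to verify): additivity of the delta derivative plus left-distributivity of $A(t)$ over the sum. Your observation that no product rule is needed and that left multiplication by $A(t)$ sidesteps noncommutativity is accurate.
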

\begin{lemma}\label{diff of two nonhomo eqn}
	If $\bphi_1^{NH}(t)$ and $\bphi_2^{NH}(t)$ are two  solutions of   (\ref{nonhomegeneous eqn}).     then $\bphi_1^{NH}(t)-\bphi_2^{NH}(t)$ is a solution of  (\ref{homegeneous eqn}).
\end{lemma}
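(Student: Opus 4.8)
The plan is to verify directly that the difference $\bphi^H := \bphi_1^{NH} - \bphi_2^{NH}$ satisfies the homogeneous equation (\ref{homegeneous eqn}), exploiting the additivity of the delta derivative together with the left-distributivity of quaternion matrix multiplication. First I would apply statement 2 of Theorem \ref{delta derivative for q-functions}, which asserts that the delta derivative is additive, to obtain $(\bphi^H)^{\Delta} = (\bphi_1^{NH})^{\Delta} - (\bphi_2^{NH})^{\Delta}$. Although that theorem is phrased for scalar functions, the delta derivative of a vector-valued function is taken componentwise, so additivity extends entrywise without difficulty.

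Next I would substitute the defining relations. Since both $\bphi_1^{NH}$ and $\bphi_2^{NH}$ solve (\ref{nonhomegeneous eqn}), we have $(\bphi_i^{NH})^{\Delta}(t) = A(t)\bphi_i^{NH}(t) + \bfunc(t)$ for $i=1,2$; subtracting these two identities makes the common forcing term $\bfunc(t)$ cancel, which leaves $(\bphi^H)^{\Delta}(t) = A(t)\bphi_1^{NH}(t) - A(t)\bphi_2^{NH}(t)$. Finally I would collect the matrix factor on the left via $A(t)\bphi_1^{NH}(t) - A(t)\bphi_2^{NH}(t) = A(t)\big(\bphi_1^{NH}(t) - \bphi_2^{NH}(t)\big) = A(t)\bphi^H(t)$, giving $(\bphi^H)^{\Delta}(t) = A(t)\bphi^H(t)$, which is exactly the homogeneous equation (\ref{homegeneous eqn}).

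I do not anticipate any genuine obstacle here, as the argument is a routine linearity computation. The one place where quaternion noncommutativity might conceivably intrude is the distributivity step; but this is benign, because the coefficient matrix $A(t)$ always acts on its argument from the left, so the identity used is merely left-distributivity of matrix multiplication over vector subtraction, which holds over any ring and is unaffected by the failure of commutativity.
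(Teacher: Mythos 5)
Your argument is correct and is precisely the routine verification the paper has in mind: the paper states this lemma without proof (``It is easy to verify the following two results''), and your computation---additivity of the delta derivative, cancellation of $\bfunc(t)$, and left-distributivity of $A(t)$---is exactly the intended justification. Your remark that only left-distributivity is needed, so quaternion noncommutativity causes no trouble, is also accurate.
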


Next result  describes the structure of the general solution of (\ref{nonhomegeneous eqn}).

\begin{theorem}\label{Thm of structure of nonhomo eqn}
	Suppose that $M(t)$ be a  fundamental solution matrix of (\ref{homegeneous eqn}) and $\bphi_0^{NH}(t)$ be a solution of (\ref{nonhomegeneous eqn}). Then any solution of  (\ref{nonhomegeneous eqn}) can be expressed by
	\begin{equation}\label{structure of nonhomo eqn}
	\bphi^{NH}(t)=M(t)\bq+\bphi_0^{NH}(t)
	\end{equation}
	where $\bq=(q_1,q_2,\cdot\cdot\cdot,q_n)^{\top}$ is a constant quaternion vector.
\end{theorem}
\begin{proof}
	For any solution $\bphi^{NH}(t)$ of (\ref{nonhomegeneous eqn}), we know that $\bphi^{NH}(t)-\bphi_0^{NH}(t)$ is a solution of (\ref{homegeneous eqn}) by Lemma
	\ref{diff of two nonhomo eqn}. From Theorem \ref{structure of the general solution}, it follows that there exists $\bq\in \H^n$ such that $  \bphi^{NH}(t)-\bphi_0^{NH}(t)=M(t)\bq$. Then we have equality (\ref{structure of nonhomo eqn}).
\end{proof}

Theorem \ref{Thm of structure of nonhomo eqn}  indicates that if we know a  fundamental solution matrix of (\ref{homegeneous eqn}) and a particular solution  of
(\ref{nonhomegeneous eqn}), then we actually know all possible solutions of (\ref{nonhomegeneous eqn}). The following result further points out that if a fundamental solution matrix of (\ref{homegeneous eqn}) $M(t)$ is known, then the general solution of (\ref{nonhomegeneous eqn}) can be specifically described by method of variation of constants.

\begin{theorem}\label{variation of constants}
	Let $A\in \mathcal{R}(\T,\H^{n\times n})$  and $M(t)$  be a fundamental solution matrix of  (\ref{homegeneous eqn}). Suppose that $\bfunc$ is rd-continuous. Then the general solution of
	(\ref{nonhomegeneous eqn}) is given by
	\begin{equation}\label{general solution of nonhomo eqn}
	\bphi^{NH}(t)=M(t)\bq+M(t)\int_{t_0}^tM^{-1}(\sigma(\tau))\bfunc(\tau)\Delta \tau.
	\end{equation}
	where $\bq=(q_1,q_2,\cdot\cdot\cdot,q_n)^{\top}$ is a constant quaternion vector.
\end{theorem}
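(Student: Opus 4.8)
The plan is to reduce the claim to the structural theorem already in hand and then merely verify that the integral term in (\ref{general solution of nonhomo eqn}) is one particular solution of (\ref{nonhomegeneous eqn}). By Theorem \ref{Thm of structure of nonhomo eqn}, every solution of the nonhomogeneous equation has the form $M(t)\bq+\bphi_0^{NH}(t)$ for a constant $\bq\in\H^n$ and an arbitrary fixed particular solution $\bphi_0^{NH}$. Hence it suffices to show that
$$\bphi_0^{NH}(t):=M(t)\int_{t_0}^t M^{-1}(\sigma(\tau))\bfunc(\tau)\Delta\tau$$
solves (\ref{nonhomegeneous eqn}); formula (\ref{general solution of nonhomo eqn}) then follows by writing the general solution as $M(t)\bq+\bphi_0^{NH}(t)$.

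First I would confirm the candidate is well posed. Since $M(t)$ is a fundamental solution matrix, Corollary \ref{whether fundamental solution matrix or not} guarantees $M(t)$ is invertible for every $t\in\T$, so in particular $M(\sigma(\tau))$ is invertible for each $\tau\in\T^{\kappa}$ and $M^{-1}(\sigma(\tau))$ makes sense. As $M^{-1}\circ\sigma$ is rd-continuous and $\bfunc$ is rd-continuous by hypothesis, the integrand $g(\tau):=M^{-1}(\sigma(\tau))\bfunc(\tau)$ is rd-continuous and therefore possesses an antiderivative; thus $v(t):=\int_{t_0}^t g(\tau)\Delta\tau$ satisfies $v^{\Delta}(t)=g(t)=M^{-1}(\sigma(t))\bfunc(t)$, with $v(t_0)=0$.

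Next I would differentiate $\bphi_0^{NH}=Mv$ using the (entrywise) product rule of Theorem \ref{delta derivative for q-functions} in the form $(Mv)^{\Delta}=M^{\Delta}v+M^{\sigma}v^{\Delta}$. Because $M$ is a solution matrix, $M^{\Delta}(t)=A(t)M(t)$, while the second term collapses via
$$M(\sigma(t))\,v^{\Delta}(t)=M(\sigma(t))\,M^{-1}(\sigma(t))\,\bfunc(t)=\bfunc(t).$$
Combining these yields
$$(\bphi_0^{NH})^{\Delta}(t)=A(t)M(t)v(t)+\bfunc(t)=A(t)\bphi_0^{NH}(t)+\bfunc(t),$$
so $\bphi_0^{NH}$ indeed satisfies (\ref{nonhomegeneous eqn}), which completes the verification.

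The main subtlety — and precisely the reason the shifted argument $\sigma(\tau)$ appears in (\ref{general solution of nonhomo eqn}), in contrast to the classical continuous case — lies in this last step: on a general time scale the product rule forces the factor $M^{\sigma}=M\circ\sigma$ onto the $v^{\Delta}$ term, so the inverse inside the integral must be evaluated at $\sigma(\tau)$ for the cancellation $M^{\sigma}(M^{\sigma})^{-1}=I_n$ to reproduce $\bfunc$. I expect the only points needing genuine care to be the rd-continuity of $M^{-1}\circ\sigma$ (ensuring $g$ has an antiderivative) and the observation that the scalar product rule of Theorem \ref{delta derivative for q-functions} extends entrywise to the matrix--vector product $Mv$ despite the noncommutativity of $\H$; this is legitimate because each entry of $Mv$ is a finite sum of scalar products, with the left factor kept on the left, to which statement 4 applies term by term.
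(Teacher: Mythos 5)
Your proposal is correct and its core computation --- differentiating $M(t)\int_{t_0}^t M^{-1}(\sigma(\tau))\bfunc(\tau)\Delta\tau$ by the product rule, using $M^{\Delta}(t)=A(t)M(t)$ and the cancellation $M(\sigma(t))M^{-1}(\sigma(t))=I_n$ --- is exactly the verification step in the paper's own proof. The only difference is in how generality is established: the paper first derives the formula from the ansatz $\bphi^{NH}(t)=M(t)\bq(t)$ (solving for $\bq^{\Delta}$) and then verifies it, whereas you obtain generality by citing Theorem \ref{Thm of structure of nonhomo eqn} and exhibiting the integral term as one particular solution; both routes are legitimate, and your explicit attention to the invertibility of $M(\sigma(\tau))$ and the rd-continuity of the integrand covers points the paper leaves implicit.
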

\begin{proof}
	Let us look for a solution of (\ref{nonhomegeneous eqn}) in a form similar to
	(\ref{general sol}). Suppose that
	\begin{equation}\label{variation of cont}
	\bphi^{NH}(t)=M(t)\bq(t)
	\end{equation}
	By differentiating  both sides of   (\ref{variation of cont}),  we obtain
	\begin{equation*}
		A(t)M(t)\bq(t)+ \bfunc(t)=M^{\Delta}(t)\bq(t)+M(\sigma(t))\bq^{\Delta}(t).
	\end{equation*}
	Observe that $M^{\Delta}(t)=A(t)M(t)$. Thus
	\begin{equation*}
		\bq^{\Delta}(t)=M^{-1}(\sigma(t))\bfunc(t).
	\end{equation*}
	Therefore
	\begin{equation*}
		\bq(t)=\int_{t_0}^tM^{-1}(\sigma(\tau))\bfunc(\tau)\Delta \tau+\bq,
	\end{equation*}
	where $\bq$ is a constant quaternion vector. Then we obtain an  expression of $\bphi^{NH}(t)$ as follows:
	\begin{equation*}
		\bphi^{NH}(t)=M(t)\bq+M(t)\int_{t_0}^tM^{-1}(\sigma(\tau))\bfunc(\tau)\Delta \tau.
	\end{equation*}
	Now it remains to show that  (\ref{general solution of nonhomo eqn}) is  exactly a solution of (\ref{nonhomegeneous eqn}). We use product rule to differentiate $\bphi^{NH}(t)$:
	\begin{equation*}
		\begin{split}
			\left(\bphi^{NH}\right)^{\Delta}(t) & =M^{\Delta}(t)\bq+M^{\Delta}(t)\int_{t_0}^tM^{-1}(\sigma(\tau))\bfunc(\tau)\Delta \tau+M(\sigma(t)) M^{-1}(\sigma(t))\bfunc(t)\\
			& =A(t)\left( M(t)\bq+M(t)\int_{t_0}^tM^{-1}(\sigma(\tau))\bfunc(\tau)\Delta \tau \right)+\bfunc(t)\\
			&=A(t)\bphi^{NH}(t)+\bfunc(t).
		\end{split}
	\end{equation*}
	The proof is complete.
\end{proof}
\begin{corollary}\label{varia of const with state-transition matrix}
	The solution of initial value problem (\ref{nonhomegeneous eqn}) and (\ref{IVP}) is given by
	\begin{equation}\label{variation of cont transition matrix}
	\bphi^{NH}(t)=\Psi_A(t,t_0) \eta + \int_{t_0}^t \Psi_A(t,\sigma(\tau))\bfunc(\tau)\Delta \tau.
	\end{equation}
	In particular, let $\alpha\in \H $ be a quaternion constant, then the solution of
	$y^{\Delta}(t)=\alpha  y(t)+  f (t) ,~~  y(t_0)=0$
	is  \begin{equation*}
		\begin{split}
			y(t)= & \int_{t_0}^t E_{\alpha}(t,\sigma(\tau))f(\tau)\Delta\tau \\
			=&  E_{\alpha}(t,0)\int_{t_0}^t E_{\alpha}(0,\sigma(\tau))f(\tau)\Delta\tau\\
			=& E_{\alpha}(t,0)\int_{t_0}^t E^{-1}_{\alpha}( \sigma(\tau),0)f(\tau)\Delta\tau.
		\end{split}
	\end{equation*}
\end{corollary}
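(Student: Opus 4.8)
The plan is to obtain both displays directly from the variation of constants formula of Theorem \ref{variation of constants} by fixing the free constant vector $\bq$ through the initial data. I would start from the general solution
\[
  \bphi^{NH}(t)=M(t)\bq+M(t)\int_{t_0}^tM^{-1}(\sigma(\tau))\bfunc(\tau)\Delta \tau
\]
and impose $\bphi^{NH}(t_0)=\bmeta$ from (\ref{IVP}). The integral from $t_0$ to $t_0$ vanishes, so evaluation at $t=t_0$ forces $M(t_0)\bq=\bmeta$; since $M(t_0)$ is invertible by Corollary \ref{whether fundamental solution matrix or not}, we get $\bq=M^{-1}(t_0)\bmeta$. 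Substituting back and recalling the definition $\Psi_A(t,s)=M(t)M^{-1}(s)$, the homogeneous part becomes $\Psi_A(t,t_0)\bmeta$.

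For the integral part I would push the $\tau$-independent factor $M(t)$ inside the integral sign from the left, which is legitimate by the left-linearity of the delta integral, obtaining $\int_{t_0}^t M(t)M^{-1}(\sigma(\tau))\bfunc(\tau)\Delta\tau=\int_{t_0}^t \Psi_A(t,\sigma(\tau))\bfunc(\tau)\Delta\tau$. This yields (\ref{variation of cont transition matrix}). Throughout I must preserve the noncommutative ordering: the state-transition factor always multiplies $\bfunc(\tau)$ on the left, so no quaternions may be transposed across one another.

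The scalar case is the specialization $n=1$, $A\equiv\alpha$, $\bfunc=f$, and $\bmeta=0$, for which $M(t)=E_\alpha(t,0)$ is a fundamental solution by Theorem \ref{exp of q-constant} and the boundary term drops out, so (\ref{variation of cont transition matrix}) reduces to the first claimed equality. The two remaining equalities are rewritings of the kernel $E_\alpha(t,\sigma(\tau))$, and the step I expect to need the most care is the factorization $E_\alpha(t,\sigma(\tau))=E_\alpha(t,0)E_\alpha(0,\sigma(\tau))$. In a noncommutative setting such a semigroup identity is not free, but it is available here precisely because $\alpha$ is a quaternion constant: all the exponents $\int\xi_{\mu(\tau)}(\alpha)\Delta\tau$ lie in the single commutative subfield $\mathbb{C}_{\bm\varsigma}$ with $\bm\varsigma=\Im(\alpha)/\abs{\Im(\alpha)}$, so Lemma \ref{lemma of commute pro of exponential} applies. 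Pulling the $\tau$-independent factor $E_\alpha(t,0)$ out to the left gives the second equality, and finally the inverse relation $E_\alpha(0,\sigma(\tau))=E_\alpha^{-1}(\sigma(\tau),0)$ --- which follows from the same lemma together with $E_\alpha(s,s)=1$, equivalently from the inverse property in Proposition \ref{state transition 1D} --- converts it into the third equality.
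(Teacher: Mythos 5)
Your derivation is correct and is exactly the route the paper intends: the corollary is stated without proof as an immediate consequence of Theorem \ref{variation of constants}, obtained by solving $M(t_0)\bq=\bmeta$ for $\bq$, rewriting via $\Psi_A(t,s)=M(t)M^{-1}(s)$, and specializing to $n=1$ with $M(t)=E_\alpha(t,0)$. You correctly identify the one genuinely noncommutative point — that $E_\alpha(t,\sigma(\tau))=E_\alpha(t,0)E_\alpha(0,\sigma(\tau))$ needs Lemma \ref{lemma of commute pro of exponential} (constant $\alpha$ keeps all exponents in a single commutative slice $\mathbb{C}_{\bm\varsigma}$) — so nothing is missing.
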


\section{ Linear QDETS with constant coefficients}\label{S5}

Let $A\in \mathcal{R}(\T,\H^{n\times n})$ be a constant quaternion matrix and suppose that $\bfunc\in  \H^{n\times 1}\otimes \T$ is rd-continuous. In this section, we consider the following  quaternion-valued linear  equations
\begin{equation}\label{nonhomegeneous eqn with const coef}
\bphi^{\Delta}(t){=}A \bphi(t){+}\bfunc(t)
\end{equation}
and its corresponding homogeneous equations
\begin{equation}\label{homegeneous eqn with const coef}
\bphi^{\Delta}(t){=}A \bphi(t).
\end{equation}

\begin{theorem}\label{single solution}
	If $\lambda $ is a right eigenvalue of $A$ and $\bmeta$  is an eigenvector  corresponding to
	$\lambda $. Then $\bphi(t)=\bmeta E_{\lambda}(t,0)$ is a solution of
	(\ref{homegeneous eqn with const coef}).
\end{theorem}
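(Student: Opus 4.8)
The plan is to verify directly that $\bphi(t)=\bmeta E_{\lambda}(t,0)$ satisfies the homogeneous equation (\ref{homegeneous eqn with const coef}), exploiting the fact that $\lambda$ is a \emph{complex} number with nonnegative imaginary part (by statement 1 of Theorem \ref{thm of q matrix}, every standard eigenvalue is of this form). The key observation is that $\bmeta$ is a \emph{constant} quaternion vector, so by statement 3 of Theorem \ref{delta derivative for q-functions} the delta derivative passes through the left multiplication by $\bmeta$ cleanly, giving $\bphi^{\Delta}(t)=\bmeta\, E_{\lambda}^{\Delta}(t,0)$. This avoids any entanglement with the noncommutative product rule, which is precisely why left-multiplication by the eigenvector is the right normalization.

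First I would establish that $E_{\lambda}^{\Delta}(t,0)=\lambda\, E_{\lambda}(t,0)$. Since $\lambda\in\mathbb{C}_{\bm\varsigma}$ is a constant (either real, or a genuine complex number lying in some commutative slice $\mathbb{C}_{\bm\varsigma}$), this is exactly the content of Theorem \ref{exp of q-constant}: for a quaternion-constant or real-valued regressive coefficient, the generalized exponential $E_{\lambda}(t,s)$ coincides with the state transition $\psi_{\lambda}(t,s)$ and hence solves the scalar equation $y^{\Delta}=\lambda y$. One should first check that $\lambda$ is regressive as a scalar coefficient, i.e. $1+\mu(t)\lambda\neq 0$; this follows from the hypothesis $A\in\mathcal{R}(\T,\H^{n\times n})$ together with the Remark after Lemma \ref{equivalent conditions of regressivity 2}, which guarantees that every eigenvalue of $A$ (standard or otherwise) is regressive, so $E_{\lambda}$ is well-defined.

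Combining these two steps, I would compute
\begin{equation*}
  \bphi^{\Delta}(t)=\bmeta\, E_{\lambda}^{\Delta}(t,0)=\bmeta\,\lambda\, E_{\lambda}(t,0).
\end{equation*}
Now the eigenvalue relation $A\bmeta=\bmeta\lambda$ (in the right-eigenvector sense) lets me rewrite $\bmeta\,\lambda=A\bmeta$, and since $E_{\lambda}(t,0)$ is a \emph{scalar} quaternion acting on the right, it commutes with the matrix $A$ acting on the left; hence
\begin{equation*}
  \bphi^{\Delta}(t)=A\bmeta\, E_{\lambda}(t,0)=A\big(\bmeta\, E_{\lambda}(t,0)\big)=A\bphi(t),
\end{equation*}
which is exactly (\ref{homegeneous eqn with const coef}).

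The main subtlety to watch is the ordering of the scalar factor $E_{\lambda}(t,0)$ relative to the eigenvalue $\lambda$. Because $\bmeta$ is an eigenvector in the right sense, the quaternion $\lambda$ must appear on the \emph{right} of $\bmeta$ in $\bmeta\lambda=A\bmeta$, and the exponential $E_{\lambda}(t,0)$ must also sit on the right of $\bmeta$; the one point requiring care is that $\lambda$ and $E_{\lambda}(t,0)$ both lie in the same commutative slice $\mathbb{C}_{\bm\varsigma}$ (with $\bm\varsigma=\Im(\lambda)/|\Im(\lambda)|$ when $\lambda$ is non-real), so the factor $\lambda$ produced by differentiation genuinely commutes with $E_{\lambda}(t,0)$ and can be freely repositioned to form $\bmeta\lambda\,E_{\lambda}=A\bmeta\,E_{\lambda}$. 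That $\lambda$ commutes with $E_{\lambda}(t,0)$ is immediate from the closed-form representation of $\exp$ over a fixed commutative slice, so no genuine obstacle arises; the result is essentially a noncommutativity-aware bookkeeping of where each scalar factor is placed.
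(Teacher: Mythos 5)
Your proposal is correct and follows essentially the same route as the paper: regressivity of $\lambda$ (inherited from that of $A$) makes $E_{\lambda}(t,0)$ well-defined, Theorem \ref{exp of q-constant} gives $E_{\lambda}^{\Delta}(t,0)=\lambda E_{\lambda}(t,0)$, and then $\bphi^{\Delta}(t)=\bmeta\lambda E_{\lambda}(t,0)=(A\bmeta)E_{\lambda}(t,0)=A\bphi(t)$. The only slip is your opening claim that $\lambda$ is a complex number with nonnegative imaginary part --- a general right eigenvalue need not be standard --- but your argument never actually uses this, only that $\lambda$ is a quaternion constant lying in some commutative slice $\mathbb{C}_{\bm\varsigma}$, so the proof goes through unchanged.
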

\begin{proof}
	Suppose that $A$ is  regressive, then $\lambda\in  \rc(\T,\H)$ by Theorem
	\ref{equivalent conditions of  regressivity 3}.
	Thus $  E_{\lambda}(t,0)$ is well-defined and therefore
	\begin{equation*}
		\bphi^{\Delta}(t)=    \bmeta \lambda E_{\lambda}(t,0) =A\bmeta  E_{\lambda}(t,0) =A\bphi(t)
	\end{equation*}
	for $t\in\T^{\kappa}$. The proof is complete.
\end{proof}
\begin{theorem}\label{n independent solution}
	If $A$ has $n$ right linearly independent eigenvectors $\bmeta_1,\bmeta_2,\cdot\cdot\cdot,\bmeta_n$ corresponding to right eigenvalues $\lambda_1,\lambda_2, \cdot\cdot\cdot,\lambda_n$ (no matter whether they are similar). Then
	\begin{equation*}\label{fundamental solution matrix by eigenvalue}
		M(t)=\left(\bmeta_1E_{\lambda_1}(t,0),\bmeta_2E_{\lambda_2}(t,0),\cdot\cdot\cdot,\bmeta_nE_{\lambda_n}(t,0)  \right)
	\end{equation*}
	is a fundamental solution matrix of (\ref{homegeneous eqn with const coef}). In particular, if $A$ has $n$ distinct standard eigenvalues, then $\lambda_1,\lambda_2, \cdot\cdot\cdot,\lambda_n$ can be chosen to be the standard eigenvalues of $A$.
\end{theorem}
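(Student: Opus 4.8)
The plan is to reduce the claim to a single evaluation of the Wronskian at $t=0$ via Corollary \ref{whether fundamental solution matrix or not}, and to read off invertibility there directly from the right linear independence of the eigenvectors. First I would note that, by Theorem \ref{single solution}, each column $\bmeta_iE_{\lambda_i}(t,0)$ is a solution of (\ref{homegeneous eqn with const coef}), so that $M(t)$ is genuinely a solution matrix and its Wronskian $W(t)=\qd M(t)$ is defined. By Corollary \ref{whether fundamental solution matrix or not} it then suffices to exhibit a single $t_0\in\T$ with $W(t_0)\neq0$.

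Next I would take $t_0=0$. Since $E_{\lambda_i}(0,0)=\exp(0)=1$, the matrix collapses to $M(0)=(\bmeta_1,\bmeta_2,\cdots,\bmeta_n)$, whose columns are precisely the given eigenvectors. Right linear independence of $\bmeta_1,\cdots,\bmeta_n$ says exactly that $M(0)\bq=\bmeta_1q_1+\cdots+\bmeta_nq_n=0$ forces $\bq=0$; for a square matrix over $\H$ this trivial-kernel property is equivalent to invertibility (see \cite{zhang1997quaternions}). Hence $M(0)$ is invertible, and by statement 2 of Theorem \ref{thm of q matrix} its complex adjoint $\chi_{M(0)}$ is invertible as well, so $W(0)=\det\chi_{M(0)}\neq0$. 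Corollary \ref{whether fundamental solution matrix or not} then immediately yields that $M(t)$ is a fundamental solution matrix.

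For the final assertion I would argue that $n$ distinct standard eigenvalues are automatically a legitimate choice of the $\lambda_i$. Standard eigenvalues are complex numbers with nonnegative imaginary part, and two quaternions can be similar only if they share both their real part $\Re$ and the modulus $\abs{\Im}$ of their imaginary part; hence distinct standard eigenvalues are pairwise non-similar. Statement 4 of Theorem \ref{thm of q matrix} then guarantees that eigenvectors attached to them are right linearly independent, so the hypotheses of the first part are met and the conclusion follows.

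The main obstacle is the bridge in the second paragraph: converting the purely algebraic right linear independence of the eigenvectors into the non-vanishing of the $p$-determinant $W(0)=\det\chi_{M(0)}$. This hinges on the fact that, for square quaternion matrices, right linear independence of the columns coincides with invertibility, which must then be transported through the adjoint $\chi$ via Theorem \ref{thm of q matrix}(2); everything else is bookkeeping. I would be careful to stress that no assumption of pairwise non-similarity is needed for the general statement, since right linear independence of the $\bmeta_i$ is assumed outright, and that the similarity discussion enters only to \emph{produce} that independence in the special case of distinct standard eigenvalues.
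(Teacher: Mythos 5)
Your proposal is correct and follows essentially the same route as the paper: columns are solutions by Theorem \ref{single solution}, the Wronskian at $t=0$ reduces to $\qd(\bmeta_1,\dots,\bmeta_n)\neq 0$ by right linear independence, and Corollary \ref{whether fundamental solution matrix or not} concludes; the special case likewise uses pairwise non-similarity of distinct standard eigenvalues together with statement 4 of Theorem \ref{thm of q matrix}. You merely spell out more explicitly than the paper the bridge from right linear independence to invertibility of $M(0)$ and thence to $\det\chi_{M(0)}\neq 0$, which is a welcome clarification rather than a deviation.
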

\begin{proof}
	By Theorem \ref{single solution}, we see that $\bmeta_1E_{\lambda_1}(t,0),\bmeta_2E_{\lambda_2}(t,0),\cdot\cdot\cdot,\bmeta_nE_{\lambda_n}(t,0)$
	are solutions of (\ref{homegeneous eqn with const coef}). It remains to show that they are right linearly independent. Let $W(t)$ be the Wronskian of $M(t)$. Since  $\bmeta_1,\bmeta_2,\cdot\cdot\cdot,\bmeta_n$ are right linearly independent, then
	\begin{equation*}
		W(0)={\qd}M(0)={\qd} (\bmeta_1,\bmeta_2,\cdot\cdot\cdot,\bmeta_n)\neq 0.
	\end{equation*}
	Thus $M(t)$  is a fundamental solution matrix of (\ref{homegeneous eqn with const coef}) by Theorem \ref{whether fundamental solution matrix or not}. If $A$ has $n$ distinct standard eigenvalues  $\lambda_1,\lambda_2, \cdot\cdot\cdot,\lambda_n$, then they are  pairwise non-similar. By   Theorem \ref{thm of q matrix}, we know that their corresponding eigenvectors are right linearly independent.  This completes the proof.
\end{proof}

\begin{example}
	Find a  fundamental solution matrix  of
	\begin{equation}\label{example1 of find funda sol mat}
	\bphi^{\Delta}(t)= A \bphi(t)=   \begin{pmatrix}
	\qi&1\\
	0&1+\qi
	\end{pmatrix}\bphi(t).
	\end{equation}
	for the special time scales  $\T=\mathbb{Z}$.

	By  Theorem \ref{thm of q matrix}, we know that $A$ has two distinct   standard eigenvalues $\lambda_1=\qi,\lambda_2=1+\qi$. Their corresponding eigenvectors are $\bmeta_1=(1,0)^{\top}, \bmeta_2=(1,1)^{\top}$ respectively. By Example \ref{exponential in hZ}, we have
	$E_{\lambda_1}(t,0)=(1+\qi)^t$  and $E_{\lambda_2}(t,0)=(2+\qi)^t$.  Therefore
	\begin{equation*}
		\begin{pmatrix}
			(1+\qi)^t&(2+\qi)^t\\
			0&(2+\qi)^t
		\end{pmatrix}
	\end{equation*}
	is a  fundamental solution matrix of (\ref{example1 of find funda sol mat}).
\end{example}

\begin{example}
	Find a  fundamental solution matrix  of
	\begin{equation}\label{example2 of find funda sol mat}
	\bphi^{\Delta}(t)= A \bphi(t)=   \begin{pmatrix}
	\qi&\qj\\
	0& \qi
	\end{pmatrix}\bphi(t).
	\end{equation}
	for the special time scales  $\T=\mathbb{Z}$.

	By  Theorem \ref{thm of q matrix}, we know that $A$    has a repeated standard eigenvalue  $\lambda =\qi$. Although $A$ only has one standard eigenvalue,  there are two right linearly independent eigenvectors  $\bmeta_1=(1,0)^{\top}, \bmeta_2=( \frac{\qk}{2},1)^{\top}$ corresponding to  $\lambda =\qi$. Therefore
	\begin{equation*}
		\begin{pmatrix}
			(1+\qi)^t&\frac{\qk (1+\qi)^t}{2}\\
			0&(1+\qi)^t
		\end{pmatrix}
	\end{equation*}
	is a  fundamental solution matrix of (\ref{example2 of find funda sol mat}).
\end{example}

From Example 7.4 in \cite{zhang1997quaternions}, we know that not every $n\times n$ constant quaternion matrix has $n$ right linearly independent eigenvectors. In this case, Theorem
\ref{n independent solution} is of no use any more. The Putzer's  algorithm for the classical case in \cite{bohner2001dynamic} is  generalized to  quaternion dynamic equations on time scales. Since Putzer's  algorithm  avoids the computing of  eigenvectors. So  it is particularly useful for quaternion matrices that do not have  $n$ right linearly independent eigenvectors.

\begin{theorem}\label{Thm of putzer algorithm}
	Let $A\in \mathcal{R}(\T,\H^{n\times n})$ be a constant quaternion matrix.   If there exists
	quaternion constants $\alpha_1,\alpha_2,\cdot\cdot\cdot,\alpha_m \in \rc(\T,\H)$  such that $P_m=0$,  where $P_0,P_1,P_2,\cdot\cdot\cdot,P_m$ are recursively defined by $P_0=I_n$ and
	\begin{equation*}
		\begin{split}
			P_{k}= &  AP_{k-1}-P_{k-1}\alpha_{k} \\
			=&  A^k + \sum_{j=1}^k(-1)^jA^{k-j}\sum_{1\leq  i_1< i_2\cdot\cdot\cdot<i_j\leq k} \alpha_{i_1}\alpha_{i_2}\cdot\cdot\cdot\alpha_{i_j}
		\end{split}
	\end{equation*}
	{for} $1\leq k\leq m $.
	Then
	\begin{equation}\label{putzer algorithm}
	\Psi_A(t,t_0)=\sum_{k=0}^{m-1}P_k\varphi_{k+1}(t),
	\end{equation}
	where $\bvphi(t):=(\varphi_1(t),\varphi_1(t),\cdot\cdot\cdot,\varphi_m(t))^{\top}$ is the solution of the following initial value problem
	\begin{equation}\label{vphi def by IVP}
	\bvphi^{\Delta}(t)=  \begin{pmatrix}
	\alpha_1&0&0&\dots&0\\
	1 &\alpha_2 & 0&\ddots&\vdots\\
	0&1&  \alpha_3&\ddots&\vdots\\
	\vdots&   \ddots&\ddots&\ddots&0\\
	0&\dots&0&1&\alpha_m
	\end{pmatrix}
	\bvphi(t),~~~\bvphi(t_0)=
	\begin{pmatrix}
	1\\
	0\\
	0\\
	\vdots\\
	0
	\end{pmatrix}.
	\end{equation}
\end{theorem}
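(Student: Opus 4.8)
The plan is to show that the matrix-valued function
\[
  X(t) := \sum_{k=0}^{m-1} P_k \varphi_{k+1}(t)
\]
coincides with the state-transition matrix $\Psi_A(t,t_0)$ by verifying that it solves the defining matrix initial value problem $X^{\Delta}(t)=AX(t)$, $X(t_0)=I_n$, and then invoking uniqueness. First I would confirm that the IVP (\ref{vphi def by IVP}) is well-posed: its coefficient matrix is lower triangular with diagonal entries $\alpha_1,\dots,\alpha_m$, so by statement 5 of Theorem \ref{thm of q matrix} together with Lemma \ref{equivalent conditions of  regressivity 1} and the hypothesis $\alpha_i\in\rc(\T,\H)$ it is regressive; hence $\bvphi$ exists, is unique, and each $\varphi_j$ is a well-defined rd-continuous function. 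The initial condition for $X$ is then immediate, since $\bvphi(t_0)=(1,0,\dots,0)^{\top}$ kills every term except $k=0$, leaving $X(t_0)=P_0\varphi_1(t_0)=I_n$.

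The heart of the argument is matching $X^{\Delta}$ with $AX$. Because each $P_k$ is a constant quaternion matrix, statement 3 of Theorem \ref{delta derivative for q-functions} gives $X^{\Delta}=\sum_{k=0}^{m-1}P_k\varphi_{k+1}^{\Delta}$. Reading the rows of (\ref{vphi def by IVP}) off as $\varphi_1^{\Delta}=\alpha_1\varphi_1$ and $\varphi_{k+1}^{\Delta}=\varphi_k+\alpha_{k+1}\varphi_{k+1}$ for $k\geq 1$, substituting, and splitting the sum, I obtain
\[
  X^{\Delta}=P_0\alpha_1\varphi_1+\sum_{k=1}^{m-1}P_k\varphi_k+\sum_{k=1}^{m-1}P_k\alpha_{k+1}\varphi_{k+1}.
\]
On the other hand, rewriting the recursion $P_k=AP_{k-1}-P_{k-1}\alpha_k$ as $AP_k=P_{k+1}+P_k\alpha_{k+1}$ and discarding the term $P_m\varphi_m=0$ yields
\[
  AX=\sum_{k=0}^{m-1}\bigl(P_{k+1}+P_k\alpha_{k+1}\bigr)\varphi_{k+1}
    =\sum_{j=1}^{m-1}P_j\varphi_j+\sum_{k=0}^{m-1}P_k\alpha_{k+1}\varphi_{k+1}.
\]
A term-by-term comparison shows the two right-hand sides are identical, so $X^{\Delta}=AX$.

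The point that genuinely requires care---and where the noncommutativity of $\H$ enters---is the bookkeeping of the side on which each factor sits. The recursion places $\alpha_k$ to the \emph{right} of $P_{k-1}$, the equation (\ref{vphi def by IVP}) places $\alpha_{k+1}$ to the \emph{left} of $\varphi_{k+1}$, and the ansatz places the scalar $\varphi_{k+1}$ to the \emph{right} of the matrix $P_k$. These three conventions are aligned precisely so that the common cross term appears unambiguously as $P_k\alpha_{k+1}\varphi_{k+1}$ by associativity, and at no stage must two quaternions be commuted; this is the main obstacle to keeping the computation honest. Once $X^{\Delta}=AX$ and $X(t_0)=I_n$ are in hand, the uniqueness of solutions of the regressive matrix IVP (the same uniqueness used to define $\Psi_A$) forces $X(t)=\Psi_A(t,t_0)$, which is (\ref{putzer algorithm}). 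I would remark in passing that the closed form of $P_k$ on the second line of the statement is not needed for this proof; it follows separately by a routine induction on the recursion.
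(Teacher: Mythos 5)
Your proposal is correct and follows essentially the same route as the paper: verify that $X(t)=\sum_{k=0}^{m-1}P_k\varphi_{k+1}(t)$ satisfies $X^{\Delta}=AX$ with $X(t_0)=I_n$ via the telescoping cancellation $AP_k=P_{k+1}+P_k\alpha_{k+1}$ (the paper computes $X^{\Delta}-AX=-P_m\varphi_m=0$ directly, which is the same bookkeeping), after noting that the triangular system (\ref{vphi def by IVP}) is regressive, and then conclude by uniqueness. Your explicit remark about which side each quaternion factor sits on is exactly the point the paper's computation implicitly relies upon, so there is no gap.
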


\begin{proof}
	From statement 6 of  Theorem \ref{thm of q matrix} and  Lemma
	\ref{equivalent conditions of  regressivity 2}  we see that the coefficient matrix of (\ref{vphi def by IVP}) is regressive and therefore (\ref{vphi def by IVP}) has a unique solution $\bvphi(t):=(\varphi_1(t),\varphi_1(t),\cdot\cdot\cdot,\varphi_m(t))^{\top}$. Let
	$\bphi(t)$ be the right-hand side of (\ref{putzer algorithm}). Then
	\begin{equation*}
		\begin{split}
			\bphi^{\Delta}(t)-A \bphi(t)&  = \sum_{k=0}^{m-1}P_k\varphi^{\Delta}_{k+1}(t)-A\sum_{k=0}^{m-1}P_k\varphi_{k+1}(t) \\
			& =P_0\alpha_1 \varphi_1(t)+\sum_{k=1}^{m-1}P_k\left(\varphi_k (t) +\alpha_{k+1}\varphi _{k+1}(t)\right)-A\sum_{k=0}^{m-1}P_k\varphi_{k+1}(t)\\
			&=\sum_{k=1}^{m-1}P_k \varphi_k (t)-\sum_{k=0}^{m-1}\left( AP_k-P_k \alpha_{k+1}  \right) \varphi _{k+1}(t)\\
			&=\sum_{k=1}^{m-1}P_k \varphi_k (t)-\sum_{k=0}^{m-1}P_{k+1} \varphi _{k+1}(t)\\
			&=-P_m\varphi _{m}(t)=0.
		\end{split}
	\end{equation*}
	The last  equality is a consequence of $ P_m=0$. Since $\bvphi(t_0)=\varphi_1(t_0)P_0=I$. Thus
	$\bvphi(t)$ is a fundamental solution matrix of  (\ref{homegeneous eqn}).
	Therefore
	\begin{equation*}
		\Psi_A(t,t_0)= \bvphi(t) \bvphi(t_0)=\sum_{k=0}^{m-1}P_k\varphi_{k+1}(t),
	\end{equation*}
	which completes the proof.
\end{proof}

\begin{example}\label{example1 of Putzer algorithm}
	Find the state-transition matrix of the quaternion dynamic equations
	\begin{equation*}
		\bphi^{\Delta}(t)=A\bphi(t)=
		\begin{pmatrix}
			\qi&1&0\\
			0&\qj&0\\
			0&1&\qk
		\end{pmatrix}\bphi(t)
	\end{equation*}
	for the special time scales $\T=\mathbb{R}$  and $\T=\mathbb{Z}$.
	
	Since $\chi_A$ is not diagonalizable, then $A$ does not have $n$ right linearly independent eigenvectors belonging to right eigenvalues. So Theorem \ref{n independent solution}  does not apply to this problem.
	
	Let $\alpha_1=\qi, \alpha_2=-\qi, \alpha_3=\qj$ and $P_0=I_3$. Then we can easily get that
	\begin{equation*}
		P_1= \begin{pmatrix}
			0&1&0\\
			0&\qj-\qi&0\\
			0&1&\qk-\qi
		\end{pmatrix}, ~~ ~~
		P_2= \begin{pmatrix}
			0&\qj+\qi&0\\
			0&0&0\\
			0&\qj+\qk&0
		\end{pmatrix}
	\end{equation*}
	and $P_3=0$.  Now we need to  solve IVP
	\begin{equation*}
		\bvphi^{\Delta}(t)=  \begin{pmatrix}
			\qi&0&0\\
			1&-\qi&0\\
			0&1&\qj
		\end{pmatrix}\bvphi (t),~~~\bvphi(t_0)=
		\begin{pmatrix}
			1\\
			0\\
			0
		\end{pmatrix}.
	\end{equation*}
	
	If  $\T=\mathbb{R}$ and $t_0=0$, then $\varphi_1(t)=e^{\qi  t}$ and
	\begin{equation*}
		\varphi_2^{\Delta}(t)=-\qi  \varphi_2(t)+e^{\qi  t},~~~  \varphi_2(0)=0.
	\end{equation*}
	Then by Corollary \ref{varia of const with state-transition matrix}, we have $\varphi_2(t)=\sin t$ and therefore
	\begin{equation*}
		\varphi_3^{\Delta}(t)=\qj \varphi_3(t)+\sin t,~~~  \varphi_3(0)=0.
	\end{equation*}
	Thus
	\begin{equation*}
		\varphi_3(t)= \frac{1}{4} e^{-\qj t} \left(-2 \qj e^{2 \qj t} t+e^{2 \qj t}-1\right).
	\end{equation*}
	Then we obtain
	\begin{equation}\label{state-transition matrix for R of example 1}
	\begin{split}
	\Psi_A(t, 0) = \sum_{k=0}^{2}P_k\varphi_{k+1}(t)
	= \begin{pmatrix}
	e^{\qi t}& \frac{t}{2} \left(e^{\qi t}-\qk e^{-\qi t} \right)+\frac{1+\qk}{2}\sin t&0\\
	0&e^{\qj t}&0\\
	0& \frac{t}{2} \left(e^{\qj t}+\qi e^{\qj t} \right)+\frac{1-\qi}{2}\sin t&e^{\qk t}
	\end{pmatrix}.
	\end{split}
	\end{equation}
	By direct computation, we see that both $ \Psi_A^{\Delta}(t, 0)$ and $A\Psi_A(t, 0)$ are equal to
	\begin{equation*}
		\begin{pmatrix}
			\qi  e^{\qi t}& \frac{t}{2} \left(\qi e^{\qi t}+\qj e^{-\qi t} \right)+\frac{1}{2}\left(  e^{\qi t}+ e^{\qj t} \right)&0\\
			0&\qj e^{\qj t}&0\\
			0& \frac{t}{2} \left(\qj e^{\qj t}+\qk e^{\qj t} \right)+\frac{1}{2}\left( e^{\qj t}+  e^{\qk t}\right)&\qk e^{\qk t}
		\end{pmatrix}.
	\end{equation*}
	That means that (\ref{state-transition matrix for R of example 1}) is exactly the state-transition matrix.
	
	If  $\T=\mathbb{Z}$ and $t_0=0$, then $\varphi_1(t)= (1+\qi)^t$ and
	\begin{equation*}
		\varphi_2^{\Delta}(t)=-\qi  \varphi_2(t)+(1+\qi)^t,~~~  \varphi_2(0)=0.
	\end{equation*}
	Then we have $\varphi_2(t)=\frac{\qi}{2}  \left( (1-\qi)^t-(1+\qi)^t\right)$ and therefore
	\begin{equation*}
		\varphi_3^{\Delta}(t)=\qj \varphi_3(t)+\frac{1}{2} \qi \left( (1-\qi)^t-(1+\qi)^t\right),~~~  \varphi_3(0)=0.
	\end{equation*}
	Thus
	\begin{equation*}
		\begin{split}
			\varphi_3(t)= &  \frac{1}{4}\left(1-2^{-t}(1-\qi-\qj-\qk)^t\right)(1+\qi-\qj+\qk) \\
			&  +\frac{1}{4}\left(1-2^{-t}(1+\qi-\qj+\qk)^t\right)(-1+\qi+\qj+\qk) .
		\end{split}
	\end{equation*}
	Then we obtain
	\begin{equation}\label{state-transition matrix for Z of example 1}
	\begin{split}
	\Psi_A(t, 0) =  \begin{pmatrix}
	(1+\qi)^t &   \frac{\qi}{2}\gamma_1+2^{-t}(1+\qi)\gamma_2-\frac{1-\qi+\qj+\qk}{2}&0\\
	0&(1+\qi)^t+\frac{1-\qk}{2}\gamma_1 &0\\
	0&\frac{\qi}{2}\gamma_1+2^{-t}(\qi+\qk)\gamma_2 -\frac{1-\qi-\qj+\qk}{2}&(1+\qi)^t+\frac{1+\qj}{2}\gamma_1
	\end{pmatrix},
	\end{split}
	\end{equation}
	where $\gamma_1=    (1-\qi)^t-(1+\qi)^t $, $\gamma_2=  (1+\qi-\qj+\qk)^{t-1}-(1-\qi-\qj-\qk)^{t-1} $.
	By direct computation, we see that both $ \Psi_A^{\Delta}(t, 0)$ and $A\Psi_A(t, 0)$ are equal to
	\begin{equation*}
		\begin{split}
			\begin{pmatrix}
				\qi (1+\qi)^t & (1+\qi)^t  - \frac{\qk}{2}\gamma_1+2^{-t}( \qi-1)\gamma_2-\frac{1+\qi-\qj+\qk}{2}&0\\
				0&\qj (1+\qi)^t+\frac{\qj-\qi}{2}\gamma_1 &0\\
				0& (1+\qi)^t +\frac{1+\qj-\qk}{2}\gamma_1+2^{-t}(\qj-1)\gamma_2 +\frac{1-\qi+\qj-\qk}{2}&\qk (1+\qi)^t+\frac{\qk-\qi}{2}\gamma_1
			\end{pmatrix}.
		\end{split}
	\end{equation*}
	That means that (\ref{state-transition matrix for Z of example 1}) is exactly the state-transition matrix.
\end{example}

\begin{example}\label{example2 of Putzer algorithm}
	Find the state-transition matrix of the quaternion dynamic equations
	\begin{equation*}
		\bphi^{\Delta}(t)=A\bphi(t)=
		\begin{pmatrix}
			\qi&\qj&\qj\\
			\qk&1&\qk\\
			0&0&1
		\end{pmatrix}\bphi(t)
	\end{equation*}
	for the special time scales $\T=\mathbb{R}$  and $\T=\mathbb{Z}$.

	Let $\alpha_1=1, \alpha_2=0, \alpha_3=1+\qi, \alpha_4=1-\qi$ and $P_0=I_3$. Then we can easily get that
	\begin{equation*}
		P_1= \begin{pmatrix}
			\qi-1&\qj&\qj\\
			\qk&0&\qk\\
			0&0&0
		\end{pmatrix},~~
		P_2= \begin{pmatrix}
			-1&\qk&\qi+\qk\\
			\qj&-\qi&\qk-\qi\\
			0&0&0
		\end{pmatrix},~~
		P_3= \begin{pmatrix}
			0&-2\qj&-2\qj\\
			0&-2&-2\\
			0&0&0
		\end{pmatrix}
	\end{equation*}
	and $P_4=0$.
	Now we need to  solve IVP
	\begin{equation*}
		\bvphi^{\Delta}(t)=  \begin{pmatrix}
			1&0&0&0\\
			1&0&0&0\\
			0&1&1+\qi&0\\
			0&0&1&1-\qi
		\end{pmatrix}\bvphi (t),~~~\bvphi(t_0)=
		\begin{pmatrix}
			1\\
			0\\
			0\\
			0
		\end{pmatrix}.
	\end{equation*}
	
	If  $\T=\mathbb{R}$ and $t_0=0$, then $\varphi_1(t)=e^{t}, \varphi_2(t)=e^{t}-1$ and
	\begin{equation*}
		\varphi_3^{\Delta}(t)=(1+\qi)  \varphi_3(t)+ e^t-1,~~~  \varphi_3(0)=0.
	\end{equation*}
	Then  we have $\varphi_3(t)=\left(-\frac{1}{2}-\frac{\qi}{2}\right) \left(-(1+\qi) e^t+e^{(1+\qi) t}+\qi\right)$ and therefore
	\begin{equation*}
		\varphi_4^{\Delta}(t)=(1-\qi) \varphi_4(t)+ \left(-\frac{1}{2}-\frac{\qi}{2}\right) \left(-(1+\qi) e^t+e^{(1+\qi) t}+\qi\right),~~  \varphi_4(0)=0.
	\end{equation*}
	Thus
	\begin{equation*}
		\varphi_4(t)=- \frac{1+\qi}{4}  e^{(1-\qi) t}+e^t- \frac{1-\qi}{4}   e^{(1+\qi) t}-\frac{1}{2}.
	\end{equation*}
	Using the  Putzer's algorithm  (\ref{putzer algorithm}),  we can obtain
	\begin{equation}\label{state-transition matrix for R of example 2}
	\begin{split}
	\Psi_A(t, 0)
	= \begin{pmatrix}
	\frac{1-\qi}{2}+ \frac{1+\qi}{2}\gamma_1& \frac{\qk-\qj}{2}+\gamma_2&\qj \gamma_3+\gamma_4-e^t\\
	\frac{\qj-\qk}{2}+\frac{\qk-\qj}{2}\gamma_1 &\frac{1-\qi}{2}-\qj\gamma_2&\qi \gamma_3-\qj\gamma_4-(1-\qj-\qk)e^t\\
	0&0&e^{ t}
	\end{pmatrix},
	\end{split}
	\end{equation}
	where $\gamma_1=e^{(1+\qi)t}$, $\gamma_2= \frac{\qj-\qk}{2}e^{(1-\qi)t}$, $\gamma_3=\frac{\qk-1-\qi-\qj}{2} $,  $\gamma_4=e^{(1+\qi)t}\frac{1-\qi+\qj-\qk}{2}$.  The result   is consistent with the result of Example 6.3 in \cite{kou2015linear2}.

	If  $\T=\mathbb{Z}$ and $t_0=0$, then $\varphi_1(t)= 2^t,  \varphi_2(t)=2^{t}-1$ and
	\begin{equation*}
		\varphi_3^{\Delta}(t)=(1+\qi)  \varphi_3(t)+ 2^t-1,~~~  \varphi_3(0)=0.
	\end{equation*}
	Then we have
	\begin{equation*}
		\begin{split}
			\varphi_3(t)= &  (2+\qi)^t  \int_{0}^t (2+\qi)^{-(\tau+1)}(2^{\tau}-1)\Delta \tau\\
			=&   (2+\qi)^t\sum _{\tau=0}^{t-1}(2+\qi)^{-(\tau+1)}(2^{\tau}-1)\\
			=&- \frac{1+\qi}{2}\left(\qi -(1+\qi) 2^t+(2+\qi)^t \right),
		\end{split}
	\end{equation*}
	and therefore
	\begin{equation*}
		\varphi_4^{\Delta}(t)=(1-\qi) \varphi_4(t)- \frac{1+\qi}{2}\left(\qi -(1+\qi) 2^t+(2+\qi)^t \right),~~~  \varphi_4(0)=0.
	\end{equation*}
	Thus
	\begin{equation*}
		\begin{split}
			\varphi_4(t)= &   (2-\qi)^t  \int_{0}^t  (2-\qi)^{-\tau-1}  \varphi_3(\tau) \Delta \tau \\
			=  &   \frac{1}{4}\left(2^{2+t}-(1+\qi)(2-\qi)^t+(\qi-1)(2+\qi)^t-2 \right) .
		\end{split}
	\end{equation*}
	Then we obtain
	\begin{equation}\label{state-transition matrix for Z of example 2}
	\begin{split}
	\Psi_A(t, 0)
	= \begin{pmatrix}
	\frac{1-\qi}{2}+ \frac{1+\qi}{2}\gamma_1& \frac{\qk-\qj}{2}+\gamma_2&\qj \gamma_3+\gamma_4-2^t\\
	\frac{\qj-\qk}{2}+\frac{\qk-\qj}{2}\gamma_1 &\frac{1-\qi}{2}-\qj\gamma_2&\qi \gamma_3-\qj\gamma_4-(1-\qj-\qk)2^t\\
	0&0&2^{ t}
	\end{pmatrix},
	\end{split}
	\end{equation}
	where $\gamma_1= (2+\qi)^t $, $\gamma_2= \frac{\qj-\qk}{2} {(2-\qi)^t}$, $\gamma_3=\frac{\qk-1-\qi-\qj}{2} $,  $\gamma_4= {(2+\qi)^t}\frac{1-\qi+\qj-\qk}{2}$.
	By direct computation, we see that both $ \Psi_A^{\Delta}(t, 0)$ and $A\Psi_A(t, 0)$ are equal to
	\begin{equation*}
		\begin{split}
			\begin{pmatrix}
				\qi (2+\qi)^t &  \qj (2-\qi)^t&(1+\qi)\gamma_4-2^t\\
				\qk (2+\qi)^t & (2-\qi)^t &(\qk-\qj)\gamma_4-(1-\qj-\qk)2^t\\
				0& 0&2^t
			\end{pmatrix}.
		\end{split}
	\end{equation*}
	That means that (\ref{state-transition matrix for Z of example 2}) is exactly the state-transition matrix.
	
\end{example}

Theorem \ref{Thm of putzer algorithm}  is a generalization of Theorem 5.35 in \cite{bohner2001dynamic}. In the classical case, by Cayley-Hamilton theorem,
\begin{equation*}
	\prod_{j=1}^n(A-\lambda_jI)=0.
\end{equation*}
where  $A\in \C^{n\times n}$ and $\lambda_j, (1 \leq j \leq  n)$  are eigenvalues of $A$. So $\alpha_1,\alpha_2,\cdot\cdot\cdot,\alpha_m$ can be chosen as the eigenvalues of $A\in \C^{n\times n}$.   In the quaternion case, however, the selection of $\alpha_k,(1\leq k\leq m)$ is more difficult. What should be clear to us is that the less $m$  the less calculation.

We say that  $h(z)=z^m+z^{m-1}\beta_1+\cdot\cdot\cdot+\beta_m$ is an  annihilating polynomial of
quaternion matrix $A\in\H^{n\times n}$ if
\begin{equation*}
	h(A)=A^m+A^{m-1}\beta_1+\cdot\cdot\cdot+I_n\beta_m=0,
\end{equation*}
where $\beta_1,\beta_2, \cdot\cdot\cdot,\beta_m\in \H$. To authors' best knowledge, there are
(at least) two annihilating polynomials for every $A\in\H^{n\times n}$. The first one which was
presented by Zhang \cite{zhang1997quaternions} is $\mathrm{ch}_{\chi_A}(z)$, the characteristic
polynomial of $\chi_A$. In this case, $m=2n$ and $\alpha_1,\alpha_2,\cdot\cdot\cdot,\alpha_m$ are exactly the standard eigenvalues of $A$. The other one is called minimal  polynomial which was given by Rodman \cite{rodman2014topics}. The coefficients of minimal  polynomial in \cite{rodman2014topics} are confined to be real. Thus, there   may  be   some other annihilating   polynomials (with quaternion  coefficients), which possess less degree than minimal  polynomial. Since $m$-degree  minimal  polynomial possesses real coefficients, then it has $m$  complex roots and the Vieta's formula holds. Therefore, $\alpha_1,\alpha_2,\cdot\cdot\cdot,\alpha_m$  can be chosen as the complex roots of  minimal  polynomial. Unfortunately, there is no explicit expression for  minimal  polynomial of  quaternion matrices  until now. On the other hand,  even when  we    know   an annihilating polynomial  $h(z)=z^m+z^{m-1}\beta_1+\cdot\cdot\cdot+\beta_m$  (with  quaternion  coefficients) of $A$, we still can not find $\alpha_1,\alpha_2,\cdot\cdot\cdot,\alpha_m$. In fact, we need to find $\alpha_1,\alpha_2,\cdot\cdot\cdot,\alpha_m$ such that
\begin{equation}\label{alpha and beta}
(-1)^k \sum_{1\leq  i_1< i_2\cdot\cdot\cdot<i_k\leq m} \alpha_{i_1}\alpha_{i_2}\cdot\cdot\cdot\alpha_{i_k}=\beta_k
\end{equation}
for $1\leq k  \leq m$. Thus, $\alpha_1,\alpha_2,\cdot\cdot\cdot,\alpha_m$ may not exist. Even if they exist, we can not conclude that they are roots of $h(z)$
(see Example \ref{Vieta formula not hold}). Even if they are  roots of $h(z)$, we still can not find them. This is because that  the number of  zeros of   quaternion  polynomials  is indeterminate and the computing of  zeros of   quaternion  polynomials is complicated. For details of zeros of  quaternion  polynomials, please refer to \cite{serodio2001zeros,serodio2001computing,pogorui2004structure,janovska2010note}.

\begin{example}\label{Vieta formula not hold}
	Let $h(z)=z^2+z\beta_1+\beta_2$ and $\alpha_1=\qi,\alpha_2=-\qj$. Then
	$\alpha_1,\alpha_2,\beta_1,\beta_2$ satisfy (\ref{alpha and beta}), but
	\begin{equation*}
		h(\alpha_1)=\qi^2+\qi (\qj-\qi)+\qk=2\qk\neq 0.
	\end{equation*}
\end{example}

In practice, $\alpha_1,\alpha_2,\cdot\cdot\cdot,\alpha_m$  are usually chosen to be the eigenvalues of $A\in\H^{n\times n}$ (see Example 
\ref{example1 of Putzer algorithm} and \ref{example2 of Putzer algorithm}). The value of $m$ does not need to be as large as $2n$. By iterative computing, we can always  get more and more  succinct  $P_k$  as $k$  increases.
Although there are  some  theoretical challenges on selections of  $\alpha_1,\alpha_2,\cdot\cdot\cdot,\alpha_m$, the Putzer's algorithm is still feasible. The theoretical challenges give  us something to focus on and work toward.

The method of variation of constants for one-dimensional case has been used many times in  Example  \ref{example1 of Putzer algorithm} and \ref{example2 of Putzer algorithm}. We now present an example to illustrate the feasibility of the  method of variation of constants  for higher dimensional case.

\begin{example}
	Find the solution of the initial value problem
	\begin{equation}\label{variation of constants  for higher dimension}
	\bphi^{\Delta}(t)=A\bphi(t)  +\bfunc(t)=\begin{pmatrix}
	\qj&0\\
	0&\qk
	\end{pmatrix}\bphi(t) +\begin{pmatrix}
	\qi\\
	t\qj
	\end{pmatrix},~~\bphi(0)= \begin{pmatrix}
	\qj\\
	\qk
	\end{pmatrix}.
	\end{equation}
	for the special time scale $\T=\mathbb{Z}$.
	
	It is easy to get that
	\begin{equation*}
		\Psi_A(t, 0)=\begin{pmatrix}
			(1+\qj)^t&0\\
			0&(1+\qk)^t
		\end{pmatrix}.
	\end{equation*}
	By Corollary \ref{varia of const with state-transition matrix},
	\begin{equation*}
		\begin{split}
			\bphi (t)  =&  \Psi_A(t, 0) \bphi(0)+  \Psi_A(t, 0) \int_0^t \Psi^{-1}_A(\sigma(\tau), 0)\bfunc(\tau)\Delta\tau\\
			=&    \Psi_A(t, 0) \bphi(0)+\Psi_A(t, 0)\int_0^t\begin{pmatrix}
				(1+\qj)^{-\tau-1}&0\\
				0&(1+\qk)^{-\tau-1}
			\end{pmatrix}\begin{pmatrix}
				\qi\\
				t \qj
			\end{pmatrix}\Delta\tau\\
			=&  \Psi_A(t, 0) \bphi(0)+\Psi_A(t, 0)\begin{pmatrix}
				\sum _{\tau=0}^{t-1}(1+\qj)^{-\tau-1}\qi \\
				\sum _{\tau=0}^{t-1}\tau(1+\qk)^{-\tau-1}\qj
			\end{pmatrix}\\
			=&   \Psi_A(t, 0) \bphi(0)+\begin{pmatrix}
				(1+\qj)^t&0\\
				0&(1+\qk)^t
			\end{pmatrix}\begin{pmatrix}
				\left(1-(1+\qj)^{-t}\right)\qk \\
				(1+\qk)^{-t}\left(1-(1+\qk)^t+t\qk\right)\qj
			\end{pmatrix}\\
			=&  \begin{pmatrix}
				\qj (1+\qj)^t\\
				\qk (1+\qk)^t
			\end{pmatrix}+\begin{pmatrix}
				\left( (1+\qj)^{t}-1\right)\qk \\
				\left(1-(1+\qk)^t+t\qk\right)\qj
			\end{pmatrix}\\
			=&\begin{pmatrix}
				\qj (1+\qj)^t+  \left( (1+\qj)^{t}-1\right)\qk \\
				\qk (1+\qk)^t+  \left(1-(1+\qk)^t+t\qk\right)\qj
			\end{pmatrix}.\\
		\end{split}
	\end{equation*}
	
	Note that
	\begin{equation*}
		\begin{split}
			\phi_1^{\Delta}(t)= & \phi_1(t+1)- \phi_1(t) \\
			=  &   \qj (1+\qj)^{t+1}+  \left( (1+\qj)^{t+1}-1\right)\qk-\qj (1+\qj)^t+  \left( (1+\qj)^{t}-1\right)\qk\\
			= &(1+\qj)^{t}(1-\qi)\\
			= &\qj \phi_1(t)+\qi
		\end{split}
	\end{equation*}
	and
	\begin{equation*}
		\begin{split}
			\phi_2^{\Delta}(t)= & \phi_2(t+1)- \phi_2(t) \\
			=  &  \qk (1+\qk)^{t+1}+  \left(1-(1+\qk)^{t+1}+(t+1)\qk\right)\qj-  \qk (1+\qk)^t+  \left(1-(1+\qk)^t+t\qk\right)\qj\\
			= &(1+\qk)^{t}(\qi-1)-\qi\\
			= &\qk \phi_2(t)+t\qj.
		\end{split}
	\end{equation*}
	Thus   $ \bphi (t)$ is exactly the solution of
	(\ref{variation of constants  for higher dimension}).
\end{example}

\section{Conclusion}\label{S6}

In this paper, we establish the basic theory of linear quaternion dynamic equations on time scales (QDETS). It not only generalizes the theory of quaternion differential equations (QDEs) but also extends the theory of dynamic equations on time scales (DETS).
Employing the newly defined Wronskian determinant, the Liouville's formula for QDETS is derived, thereby giving the structure of general solutions of QDETS.
We present the Putzer's algorithm to compute fundamental solution matrix for homogeneous QDETS.  The Putzer's algorithm is applicable to all  homogeneous QDETS with constant coefficient matrices. It is particularly useful for quaternion coefficient matrices which are not diagonalizable. Furthermore, the variation of constants formula of solving   the nonhomogeneous QDETS is also derived. Importantly, examples are given in each sections to illustrate our results.


\end{document}